\theoremstyle{plain}
\newtheorem{master}{Master}[section]
\newtheorem{prop}[master]{Proposition}
\newtheorem{thm}[master]{Theorem}
\newtheorem{fact}[master]{Fact}
\newtheorem{lem}[master]{Lemma}
\newtheorem{cor}[master]{Corollary}
\newtheorem{question}[master]{Question}
\newcommand{\Rea}{\mathbb{R}}
\newcommand{\ZZ}{\mathbb{Z}}
\newcommand{\UU}{\mathbb{U}}
\newcommand{\QU}{\mathbb{QU}}
\newcommand{\Nat}{\mathbb{N}}
\newcommand{\Rat}{\mathbb{Q}}
\newcommand{\Int}{\mathbb{Z}}
\newcommand{\Com}{\mathbb{C}}
\newcommand{\Iso}{\mbox{Iso}}
\newcommand{\Ur}{\mathbb{U}}
\newcommand{\Gu}{\mathbb{G}}
\newcommand{\age}{\mbox{Age}}
\newcommand{\KK}{\mathcal{K}}
\newcommand{\fra}{Fra\"\i ss\'e }
\newcommand{\kat}{Kat\v{e}tov }
\def\rest{\restriction}
\newcommand{\Rep}{\mathrm{Rep}}
\newcommand{\Aut}{\mathrm{Aut}}
\newcommand \NN{\mathbb{N}}
\newcommand{\Sub}{\mathrm{Sub}}
\newcommand{\Age}{\mathcal{C}}
\newcommand{\Stab}{\mathrm{Stab}}
\theoremstyle{definition}
\newtheorem{defin}[master]{Definition}
\theoremstyle{remark}
\newtheorem{rem}[master]{Remark}
\numberwithin{equation}{section}
\begin{document}
\title[Generic representations]{Generic representations of countable groups}
\author{Michal Doucha}
\author{Maciej Malicki}
\email{doucha@math.cas.cz, mamalicki@gmail.com}
\thanks{The first named author was supported by the GA\v CR project 16-34860L and RVO: 67985840.}
\address[M. Doucha]{Institute of Mathematics, Czech Academy of Sciences, \v Zitn\' a 25, 115 67 Praha 1, Czech republic}
\address[M. Malicki]{Department of Mathematics and Mathematical Economics, Warsaw School of Economics, al. Niepodleg\l o\' sci 162, 02-554 Warsaw, Poland}
\subjclass[2010]{03E15, 22F50, 20E18, 05C20, 54H20}
\keywords{group representations, generic representations, tournaments, generic turbulence, Ribes-Zalesskii property}
\begin{abstract}
The paper is devoted to a study of generic representations (homomorphisms) of discrete countable groups $\Gamma$ in Polish groups $G$, i.e., elements in the Polish space $\Rep(\Gamma,G)$ of all representations of $\Gamma$ in $G$ whose orbit under the conjugation action of $G$ on $\Rep(\Gamma,G)$ is comeager. We investigate a closely related notion of finite approximability of actions on countable structures such as tournaments or $K_n$-free graphs, and we show its connections with Ribes-Zalesski-like properties of the acting groups. We prove that $\Int$ has a generic representation in the automorphism group of the random tournament (i.e., there is a comeager conjugacy class in this group). We formulate a Ribes-Zalesskii-like condition on a group that guarantees finite approximability of its actions on tournaments. We also provide a simpler proof of a result of Glasner, Kitroser and Melleray characterizing groups with a generic permutation representation.

We also investigate representations of infinite groups $\Gamma$ in automorphism groups of metric structures such as the isometry group $\Iso(\UU)$ of the Urysohn space, isometry group $\Iso(\UU_1)$ of the Urysohn sphere, or the linear isometry group $\mbox{LIso}(\mathbb{G})$ of the Gurarii space. We show that the conjugation action of $\Iso(\UU)$ on $\Rep(\Gamma,\Iso(\UU))$ is generically turbulent, answering a question of Kechris and Rosendal.
\end{abstract}
\maketitle
\section*{Introduction}

A \emph{representation} of a countable, discrete group $\Gamma$ in a Polish (i.e., separable and completely metrizable) topological group $G$ is a homomorphism of $\Gamma$ into $G$. The most frequently studied representations are finite-dimensional representations, i.e. homomorphisms into the matrix groups $\mathrm{GL}(n,\mathbb{K})$, where $n\in\Nat$ and $\mathbb{K}\in\{\Rea,\Com\}$, and unitary representations, i.e., homomorphisms into the unitary group $U(H)$ of separable Hilbert spaces $H$.

Especially within descriptive set theory, other interesting cases have been recently considered as well, e.g., representations in the isometry group $\Iso(\UU)$ of the Urysohn space (see \cite{MeTs}) or representations in the symmetric group $S_\infty$ of a countable set (see \cite{GKM}.) As a matter of fact, representations in automorphism groups of certain structures are nothing but actions on these structures, as is the case with the above mentioned examples, where representations are actions on finite-dimensional vector spaces,  Hilbert spaces, the Urysohn space $\mathbb{U}$ or a countable set (with no structure), respectively

Instead of a single representation, one can also investigate the space of all representations $\Rep(\Gamma,G)$, which can be equipped with a natural Polish topology. The action of $G$ on $\Rep(\Gamma,G)$ by conjugation, defined by
\[ (g.\pi)(\gamma)=g\pi(\gamma)g^{-1} \]
for $g \in G$, $\pi \in \Rep(\Gamma,G)$, $\gamma \in \Gamma$, leads to the concept of \emph{generic representation}, i.e., a representation whose orbit is comeager in $\Rep(\Gamma,G)$. As it turns out, such a global approach can provide insight into both the structure of $\Gamma$ and the structure of $G$, and it reveals interesting connections between the theory of countable groups, the theory of Polish groups, and model theory. 

An example of such a connection is the notion of \emph{ample generics}. A Polish group $G$ has ample generics if every free group $F_n$ on $n$ generators has a generic representation in $G$. Groups with ample generics satisfy certain very strong properties (see \cite{KeRo} for details), e.g., the automatic continuity property, which means that all (abstract) group homomorphisms from these groups into separable groups are continuous. On the other hand, ample generics are related to the \emph{Hrushovski property} that has been extensively studied in the context of \fra theory. Recall that a \fra class of structures $\KK$ has the Hrushovski property if for every $A \in \KK$ there is $B \in \KK$ containing $A$, and such that every partial automorphism of $A$ can be extended to an automorphism of $B$. It turns out that for a \fra class $\KK$ with sufficiently free amalgamation, the Hrushovski property implies the existence of ample generics in the automorphism group $\Aut(M)$ of its limit $M$, i.e., implies that $F_n$ has a generic representation in $\Aut(M)$ for every $n \geq 1$ (see \cite{HoHoLaSh} and \cite{KeRo}.)

Another aspect of this phenomenon has been revealed in the works of Herwig and Lascar \cite{HeLa} who showed that the Hrushovski property is closely related to the Ribes-Zalesskii property for free groups, which, in turn, is tied up with the profinite structure of countable groups. Later Rosendal \cite{Ro} proved that a countable group $\Gamma$ has the Ribes-Zalesskii property if and only if every action of $\Gamma$ on a metric space is \emph{finitely approximable}, i.e., every action of $\Gamma$ on a metric space $X$ can be approximated by actions of $\Gamma$ on finite metric spaces. It is not hard to see that for free groups the latter statement is equivalent to the Hrushovski property for metric spaces.

In the present paper, we continue this line of research. Although we work with representations in automorphism groups of structures from several areas of mathematics such as graphs, metric spaces, Banach spaces, etc., the paper is not just a collection of separate results. There is a unified methodology behind all our results. That is, whenever we construct some generic representation of some countable group $\Gamma$, it is a \fra limit of some `simple actions' of $\Gamma$. On the other hand, whenever we show that some group $\Gamma$ does not have a generic representation in an automorphism group of a structure of a given type, it is essentially by showing that there are too many actions of $\Gamma$ on structures of this given type,  i.e. one cannot construct a \fra limit of actions which would be dense. It is possible that this approach could be formalized and we partially do so for generic actions with finite orbits in the second section.

Let us present the main results of the paper. First of all, we study finite approximability for several classes of countable structures, namely tournaments, and $K_n$-free graphs. We show (Theorem \ref{prop:Z2nonapprox}) that no countable group that can be homomorphically mapped onto $\Int^2$ with a finitely generated kernel (in particular, $\Int^n$ for $n \geq 2$), has finite approximability on tournaments. This stands in sharp contrast with results of Rosendal \cite{Ro} who proved that each $\ZZ^n$ has finite approximability on graphs. Then we formulate a property closely related to the Ribes-Zalesskii property (Definition \ref{def_strong2RZ}), prove that it implies finite approximability on tournaments (Theorem \ref{thm_strong2RZ}), and verify it for $\Int$ (Proposition \ref{pr:Ztour2RZ}.)

%We leave it open whether finitely generated free groups have this property, which would imply the Hrushovski property for tournaments, and ample generics for the automorphism group of the random tournament.
Then we turn to triangle-free, and more generally $K_n$-free graphs, $n \geq 3$. We show (Theorem \ref{th:2RZKnFree} and Theorem \ref{th:3RZKnFree}) that the $2$-Ribes-Zalesskii property, and the $3$-Ribes-Zalesskii property, which are weak versions of the Ribes-Zalesskii property, form the lower and the upper `group-theoretic bounds' for finite approximability of actions on triangle-free graphs (resp. $K_n$-free graphs).

We also give in that section a simpler \fra-theoretic proof of the main result from \cite{GKM} that says that a countable, discrete group $\Gamma$ has a generic representation in $S_\infty$ if and only if it is solitary.

In the next section, we generalize known results relating, for finitely generated groups, finite approximability and generic representations with finite orbits. We prove (Theorem \ref{th:IndAmalg}) that for every sufficiently regular \fra class $\KK$ (to be more specific, for every \fra class with amalgamation allowing for amalgamating partial automorphisms), with \fra limit $M$, every action of a finitely generated group $\Gamma$ on $M$ is finitely approximable if and only if $\Gamma$ has a generic representation in $\Aut(M)$ with finite orbits. In particular, this theorem, combined with our results on tournaments, implies that there is a comeager conjugacy class in the automorphism group $\Aut(\mathcal{T})$ of the random tournament $\mathcal{T}$.

We also study generic representations in the automorphism groups of several metric structures, namely the Urysohn space $\UU$, the Urysohn sphere $\UU_1$ and the Gurarij space $\Gu$. We show (Theorem \ref{thm_meagerclassesUrysohn} and Theorem \ref{thm_meagerclassesGurarij}) that no countably infinite discrete group has a generic representation in $\Aut(X)$, where $X\in\{\UU,\UU_1,\Gu\}$. After the first version of this paper was written, we were informed by Julien Melleray that he had already proved the last result for the Urysohn space and sphere in his habilation thesis, published in \cite{Me}. The result for the sphere is however stated without a proof there, so we decided to publish it here (as well as the result for the whole Urysohn space, which is simpler). This allows us to only sketch the proof for the Gurarij space as it is analogous to that one for the sphere. This answers a question of Melleray from \cite{Me}. We moreover show additionally (Theorem \ref{thm_turbulent_element}) that the conjugation action of $\Iso(\UU)$ on $\Rep(\Gamma,\UU)$ is generically turbulent, for any infinite $\Gamma$. This answers a question of Kechris and Rosendal in \cite{KeRo} where they ask if the conjugacy action of $\Iso(\Ur)$ on itself is turbulent which is equivalent to the problem whether the conjugacy action of $\Iso(\Ur)$ on $\Rep(\Int,\Ur)$ is turbulent. The same ideas also work for the Urysohn sphere and the Gurarij space.

\section*{Terminology and basic facts}

Let $\KK$ be a countable, up to isomorphism, class of finite structures in a given language $\mathcal{L}$. We say that $\KK$ is a \emph{\fra class} if it has the hereditary property HP (for every $A\in \KK$, if $B$ can be embedded in $A$, then $B\in\KK$), the joint embedding property JEP (for any $A,B\in \KK$ there is $C\in \KK$ in which both $A$ and $B$ embed), and the amalgamation property AP (for any $A,B_1,B_2\in \KK$ and any embeddings $\phi_i\colon A\to B_i$, $i=1,2$, there are $C\in \KK$ and embeddings $\psi_i\colon B_i\to C$, $i=1,2$, such that $\psi_1\circ \phi_1=\psi_2\circ \phi_2$; we call any such $C$ an \emph{amalgam} of $B_1$ and $B_2$ over $A$). By a theorem due to Fra\"\i ss\'e, for every \fra family $\KK$, there is a unique up to isomorphism countable ultrahomogeneous structure $M$ (i.e., isomorphisms between finite substructures of $M$ extend to automorphisms of $M$) which embeds every member of $\KK$, and $\KK=\age(M)$, where $\age(M)$ is the class of all finite structures that can be embedded in $M$. In that case, we call $M$ the \emph{\fra limit} of $\KK$, see \cite[Section 7.1]{Hod}. As a matter fact, $M$ can be also characterized by its \emph{extension property}: a locally finite countable structure $X$ with $\age(X) \subseteq \KK$ is the \fra limit of $\KK$ if and only if for any $A,B \in \KK$, and embeddings $\phi:A \rightarrow X$, $i:A \rightarrow B$ there exists an embedding $\psi:B \rightarrow X$ such that $\psi \circ i =\phi$.

A locally finite structure $X$ such that $\age(X) \subseteq \KK$, for some class of finite structures $\KK$, is called a \emph{chain} from $\KK$. Suppose that $\alpha$ is an action by automorphisms of a group $\Gamma$ on a chain $X$ from a class $\KK$. We will say that $\alpha$ is \emph{finitely approximable} (by structures from $\KK$) if for every finite $F \subseteq \Gamma$ with $1 \in F$, and finite $X_0 \subseteq X$ there exists $Y \in \KK$, an action by automorphisms $\beta$ of $\Gamma$ on $Y$, and an injection $e:\alpha[F \times X_0] \rightarrow Y$ that embeds $\alpha$ restricted to $F$ and $X_0$ into $\beta$, i.e., 
\[ \beta(f,e(1,x))=e(1,\alpha(f,x)) \]
for $f \in F$, and $x \in X_0$. Note that when $\Gamma$ is the free group on finitely many generators then this corresponds to the well-studied problem when a tuple of finite partial automorphisms extends to a tuple of finite automorphisms. This was first proved by Hrushovski in \cite{Hr} for graphs and it remains open for tournaments.

Denote by $\mathcal{A}$ the class of all chains from $\KK$. We say that a \fra class $\KK$ has the \emph{Kat\v etov functor} if for any $X\in\mathcal{A}$ there are embeddings $\phi_X: X\hookrightarrow M$ and $\psi_X:\Aut(X)\hookrightarrow \Aut(M)$, where $M$ is the \fra limit of $\KK$, such that for every $f\in \Aut(X)$ and $x\in X$ we have
\[ \phi_X(f(x))=\psi_X(f)(\phi_X(x)).\]
Note that a prototypical example is the metric \fra class of metric spaces where it follows from the result of Uspenskij in \cite{Us} that this class has a Kat\v etov functor. See \cite{KuMa} for a reference on this topic.\\

Let $\Gamma$ be a countable discrete group, and let $G$ be a Polish group. The space $\Rep(\Gamma,G)$ of all representations of $\Gamma$ in $G$ (i.e., homomorphisms of $\Gamma$ into $G$) can be naturally endowed with a Polish topology by regarding it as a (closed) subspace of $G^\Gamma$. When $G$ is the automorphism group of some structure $X$, we usually write $\Rep(\Gamma,X)$ instead of the more precise $\Rep(\Gamma,\Aut(X))$. We will say that $\alpha \in \Rep(\Gamma,G)$ is a \emph{generic representation} if the orbit of $\alpha$ under the conjugation action of $G$ on $\Rep(\Gamma,G)$ is comeager in $\Rep(\Gamma,G)$. Note that in most cases we encounter the topological 0-1 law for representations is valid, i.e. either there is a generic representation in $\Rep(\Gamma,G)$, or all the conjugacy classes are meager. This is the case e.g. when there is a dense conjugacy class in $\Rep(\Gamma,G)$ (see Theorem 8.46 in \cite{Ke}).

\section{Finite approximability}

We recall that if $\Gamma$ is a discrete group, the \emph{profinite} topology on $\Gamma$ is the group topology on $\Gamma$ generated
by the basic open sets $gK$, where $g \in \Gamma$, and $K$ is a finite index normal subgroup of $\Gamma$. Thus, a subset $S \subseteq \Gamma$ is closed in the profinite topology on $\Gamma$ if for any $g \in \Gamma \setminus S$, there is a finite index normal subgroup $K \leq \Gamma$ such that $g \not \in SK$. Since this is a group topology, i.e., the group operations are continuous, $\Gamma$ is Hausdorff if and only if $\{1\}$ is closed, i.e., if for any $g \neq 1$ there is a finite index normal  subgroup $K$ not containing $g$. In other
words, $\Gamma$ is Hausdorff if and only if it is residually finite. A stronger notion than residual finiteness is {subgroup separability} or being LERF (\emph{locally extended residually finite}). Here a group $\Gamma$ is subgroup separable, or LERF, if any finitely generated subgroup $H \leq \Gamma$ is closed in the profinite topology on $\Gamma$. An even stronger notion is what we shall call the \emph{$n$-Ribes-Zalesskii} property, where $n \in \NN$, or $n$-RZ property  for brevity. Namely, for a fixed $n \in \NN$, a group $\Gamma$ is said to have the $n$-RZ property if any product $H_1H_2 \ldots H_n$ of finitely generated subgroups of $\Gamma$ is closed in the profinite topology on $\Gamma$. Finally, $\Gamma$ has the RZ property if it has the $n$-RZ property for every $n$. We refer to the paper \cite{RiZa} of Ribes and Zalesskii, where they prove that free groups have the RZ property.

In \cite{Ro}, Rosendal used the RZ property and its variants to characterize finite approximability of actions on metric spaces and graphs. In the following sections, we show that that this concept turns out to be useful also in studying $K_n$-free graphs (where $K_n$ is a clique on $n$ elements), and tournaments (recall that a tournament is a directed graph $(X,R)$ such that for any $x,y \in X$ exactly one of the arrows $(x,y)$, $(y,x)$ is in $R$.) 

Before proceeding further, let us mention a basic but very helpful fact that will be frequently used in this paper. For a group $\Gamma$ acting on a structure $X$ with a binary relation $R$, and for $x \in X$ with stabilizer $H_x$, if we have $R(x,g.x)$ for some $g\in\Gamma$, then we automatically have $R(x,gh.x)$ for any $h\in H_x$. Analogously, for $x,y \in X$ from distinct orbits, with stabilizers $H_x, H_y$, respectively, if we have $R(x,g.y)$ for some $g\in\Gamma$, then we automatically have $R(x,h_1gh_2.y)$ for any $h_1\in H_x$ and $h_2\in H_y$. These observations can be used to construct, or represent, actions of $\Gamma$ as actions by translation on disjoint unions of left cosets of the form $\Gamma/H_1\sqcup \ldots\sqcup \Gamma/H_n$, where each $H_i$ represents the stabilizer of a point.

\subsection{Tournaments}
In this section, we provide a general algebraic condition on a group which implies finite approximability on tournaments. We verify it for $\Int$, and leave it open for finitely generated non-abelian free groups. On the other hand, we show that a large class of groups containing $\Int^n$, $n \geq 2$, does not have finite approximability on tournaments. As it was mentioned in the introduction, this indicates that tournaments substantially differ from graphs because each $\Int^n$ has finite approximability on graphs (see \cite{Ro}).

Let us introduce the following definition.

\begin{defin}
Let $\Gamma$ be a countable group and let $H\leq \Gamma$. We say $H$ is \emph{good} if there are no $g\in \Gamma\setminus H$ and $h\in H$ such that $ghg\in H$.
\end{defin}

\begin{rem}\label{rem:goodsubgroups}
Clearly, if $\Gamma$ is abelian, then $H \leq \Gamma$ is good if and only if $\Gamma/H$ does not have elements of order $2$.
\end{rem}

The motivation for this definition comes from the following lemma.

\begin{lem}\label{lem_goodsubgrps}
Let $\Gamma$ be a countable group and let $H\leq \Gamma$. Then $H$ is good if and only if $H$ is a point-stabilizer for some action of $\Gamma$ on a tournament.
\end{lem}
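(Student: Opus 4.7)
The plan is to prove the two directions essentially separately, each being a short computation or construction; the main obstacle is to identify the "correct" $\Gamma$-space and the combinatorial datum encoding a tournament relation on it, and to recognize that goodness is precisely what is needed for this datum to exist.

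For the ($\Leftarrow$) direction, I would argue by contradiction. Suppose $\Gamma$ acts on a tournament $(X,R)$ with $\mathrm{Stab}(x)=H$, and pick $g\in\Gamma\setminus H$, $h\in H$ with $ghg\in H$. Put $y:=g.x\neq x$ and, without loss of generality, assume $R(x,y)$ (the other case is symmetric). The relation $ghg.x=x$ rewrites as $h.y=g^{-1}.x$. Since the action is by automorphisms of $(X,R)$, applying $g^{-1}$ to $R(x,y)$ yields $R(g^{-1}.x,x)=R(h.y,x)$, and then applying $h^{-1}$ (which fixes $x$) yields $R(y,x)$. This contradicts the tournament property and proves goodness.

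For the ($\Rightarrow$) direction, I would take the natural candidate $X:=\Gamma/H$ with left-multiplication action, so that the stabilizer of the coset $H$ is precisely $H$. A $\Gamma$-invariant binary relation on $\Gamma/H$ is determined by which double cosets in $H\backslash\Gamma/H$ it contains: more precisely, the $\Gamma$-orbits of pairs $(aH,bH)$ correspond bijectively to double cosets via $(aH,bH)\mapsto Ha^{-1}bH$, with the diagonal orbit corresponding to $H$ itself. To give a $\Gamma$-invariant tournament on $\Gamma/H$ is therefore the same as to choose, for every non-trivial double coset $HgH$, exactly one element of the (unordered) pair $\{HgH,Hg^{-1}H\}$, and declare it "positive"; then set $R(aH,bH)$ iff $aH\neq bH$ and $Ha^{-1}bH$ is positive. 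This is a well-defined $\Gamma$-invariant tournament, provided that the involution $HgH\mapsto Hg^{-1}H$ on non-diagonal double cosets is fixed-point-free.

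The key observation, which is where goodness enters, is that this involution has a fixed point iff $g^{-1}\in HgH$ for some $g\notin H$, iff $g^{-1}=h_1 g h_2$ for some $h_1,h_2\in H$, iff $g h_1 g=h_2^{-1}\in H$, i.e. iff $H$ is not good. Thus goodness is exactly the hypothesis needed to partition the non-diagonal double cosets into pairs $\{HgH,Hg^{-1}H\}$ and pick one from each, completing the construction. No step here is technically demanding; the only conceptual point is to notice the double-coset parametrization and to check that the equation $ghg\in H$ with $g\notin H,h\in H$ is equivalent to $g^{-1}\in HgH$.
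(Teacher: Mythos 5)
Your proof is correct and follows essentially the same route as the paper: the backward direction is the identical computation, and the forward direction builds the same invariant tournament on $\Gamma/H$, with goodness entering exactly as the statement that no nontrivial double coset $HgH$ equals its inverse $Hg^{-1}H$. Your double-coset parametrization of invariant relations is a cleaner packaging of the paper's Zorn's-lemma construction of a maximal set $F\subseteq\Gamma$ of representatives (and of its subsequent totality/antisymmetry checks), but the underlying idea is the same.
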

\begin{proof}
Suppose $\Gamma$ acts on a tournament $(X,R)$ and  $H$ is the stabilizer of some $x\in X$. Suppose there are $g\in \Gamma\setminus H$ and $h\in H$ such that $g h g\in H$. Without loss of generality, assume that we have $R(x,g.x)$. Since $g h g\in H$, and the action of $\Gamma$ preserves the tournament relation $R$, we have 
\[ R(x,g.x)\Leftrightarrow R(g h g.x, g.x)\Leftrightarrow\] 
\[ R(h g.x,x)\Leftrightarrow R(g.x, x);\] 
a contradiction.\\

Conversely, assume that $H$ is good. We define a tournament structure on $\Gamma/H$, on which $\Gamma$ will act canonically. Using Zorn's lemma find a maximal subset $F\subseteq \Gamma$ satisfying that for no $f,g\in F$ is there $h\in H$ such that $f  h  g \in H$. Next we set $R(f  H, g H)$ if and only if there are $h_1,h_2\in H$ and $g'\in F$ such that $f^{-1} g=h_1 g' h_2$. Clearly, the action of $\Gamma$ preserves the relation $R$, so we must check that it is a tournament relation. Suppose there is $g\in \Gamma\setminus H$ such that both $R(H,g H)$ and $R(g H, H)$ hold true. Then also $R(H, g^{-1} H)$ holds, so there are $h_1,h_2,h_3,h_4\in H$ such that $h_1 g h_2, h_3 g^{-1} h_4\in F$. This clearly violates the condition imposed on $F$ since $(h_1 g h_2)h_2^{-1} h_3^{-1}(h_3 g^{-1} h_4)\in H$. So suppose now that there is $g\in \Gamma\setminus H$ such that neither $R(H,g H)$ nor $R(H, g^{-1} H)$ hold true. Then we claim we may add $g$ into $F$ contradicting the maximality of $F$. Indeed, suppose that by adding $g$ into $F$ we violate the condition imposed on $F$. Since $H$ is good, we cannot have that $g h g\in H$ for some $h\in \Gamma$. So there are $f\in F$ and $h\in H$ such that $g h f\in H$ or $f h g\in H$. Assume the former case, the latter one is dealt with analogously. Then we have $g h f=h'$ for some $h'\in H$, so $g^{-1}=h f {h'}^{-1}$, so $R(H,g^{-1} H)$; a contradiction.
\end{proof}

We record some basic properties of good subgroups.

\begin{lem}\label{lem_goodsubgrps2}
Let $\Gamma$ be a countable group. We have
\begin{enumerate}
\item\label{lem_item1} If $(H_i)_{i\in I}$ is a collection of good subgroups of $\Gamma$, then $\bigcap_{i\in I} H_i$ is also good.
\item\label{lem_item1*} If $H\leq\Gamma$ is a good subgroup, then any conjugate $xHx^{-1}$ is also good.
\item\label{lem_item2} If $H\leq \Gamma$ is a good subgroup, then the maximal normal subgroup of $\Gamma$ contained in $H$ is also good. In particular, if $H$ is a good subgroup of finite index, then there is a good normal subgroup of finite index.

\end{enumerate}

\end{lem}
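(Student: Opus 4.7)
The plan is to handle each of the three items in turn, with item (2) reducing immediately to a combination of (1) and (1*) via the standard observation that the maximal normal subgroup of $\Gamma$ contained in $H$ is precisely the normal core $\bigcap_{x\in\Gamma} xHx^{-1}$.

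For item (1), the argument is a direct contradiction. If $K=\bigcap_{i\in I} H_i$ fails to be good, there exist $g\in \Gamma\setminus K$ and $h\in K$ with $ghg\in K$. Choose $i_0\in I$ with $g\notin H_{i_0}$; then $h,ghg\in H_{i_0}$ while $g\notin H_{i_0}$, contradicting the goodness of $H_{i_0}$.

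For item (1*), the key step is to transport a putative witness to the failure of goodness of $xHx^{-1}$ back to $H$ via conjugation by $x^{-1}$. If $g\in\Gamma\setminus xHx^{-1}$ and $xhx^{-1}\in xHx^{-1}$ satisfy $g(xhx^{-1})g = xh''x^{-1}\in xHx^{-1}$, then conjugating both sides by $x^{-1}$ yields the single identity $x^{-1}(gxhx^{-1}g)x=(x^{-1}gx)\,h\,(x^{-1}gx)=h''$. Writing $g'=x^{-1}gx$, we have $g'\notin H$ (since $g\notin xHx^{-1}$) while $h\in H$ and $g'hg'=h''\in H$, contradicting goodness of $H$. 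This one algebraic identity does all the work.

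For item (2), I would first record that the maximal normal subgroup of $\Gamma$ contained in $H$ equals the normal core $N := \bigcap_{x\in\Gamma} xHx^{-1}$: the intersection is normal and is contained in $H$ (take $x=1$), while any normal $N'\leq H$ satisfies $N'=xN'x^{-1}\subseteq xHx^{-1}$ for every $x\in\Gamma$, hence $N'\leq N$. Each conjugate $xHx^{-1}$ is good by (1*), so $N$ is good by (1). The ``in particular'' clause then follows from the standard observation that when $[\Gamma:H]<\infty$, the left translation action of $\Gamma$ on $\Gamma/H$ yields a homomorphism $\Gamma\to\mathrm{Sym}(\Gamma/H)$ whose kernel is exactly $N$, so $[\Gamma:N]\leq [\Gamma:H]!<\infty$. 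The only mildly delicate step in the whole proof is the conjugation bookkeeping in (1*); everything else is immediate.
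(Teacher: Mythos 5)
Your proof is correct and follows essentially the same route as the paper: intersecting witnesses for (1), conjugating the witness by $x^{-1}$ for (1*), and identifying the maximal normal subgroup with the normal core $\bigcap_{x\in\Gamma}xHx^{-1}$ to deduce (2) from the first two items. The only difference is that you spell out the finite-index claim for the normal core via the action on cosets, which the paper leaves implicit.
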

\begin{proof}
\eqref{lem_item1} Suppose $(H_i)_{i\leq I}\leq \Gamma$ are good. Set $H=\bigcap_{i\in I} H_i$ and take some $g\in \Gamma\setminus H$ such that there is $h\in H$ with $g h g\in H$. There exists $i\in I$ such that $g\notin H_i$. Then however $g h g\in H\leq H_i$ which violates that $H_i$ is good.\\

\eqref{lem_item1*} Let $H\leq \Gamma$ be good. Suppose that for some $x\in\Gamma$ and $g\notin xHx^{-1}$ we have $g x H x^{-1}\cap x H x^{-1}\neq\emptyset$. Then $x^{-1} g x H x^{-1} g x\cap H\neq \emptyset$, which is a contradiction since $H$ is good and $x^{-1} g x\notin H$.\\

\eqref{lem_item2}: Suppose that $H\leq \Gamma$ is a good subgroup. The maximal normal subgroup of $\Gamma$ contained in $H$ is equal to $\bigcap_{g\in\Gamma} g^{-1} H g$, so we are done by \eqref{lem_item1} and \eqref{lem_item1*}.
\end{proof}
\begin{question}\label{quest_goodgrp}
Is it true that for every countable group $\Gamma$, if $H\leq \Gamma$ is a good subgroup and $N\leq \Gamma$ is a good normal subgroup, then the subgroup $H N$ is good?
\end{question}

We now proceed with the non-approximability result.
\begin{thm}\label{prop:Z2nonapprox}
Let $\Gamma$ be a countable group such that there exists a finitely generated normal subgroup $N\leq \Gamma$ with $\Gamma/N\cong \Int^2$. Then $\Gamma$ does not have finite approximability on tournaments.

In particular, with the exception of $\Int$, no finitely generated nilpotent torsion-free group has finite approximability on tournaments.
\end{thm}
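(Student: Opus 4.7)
I would first reduce to showing that $\Int^2$ itself lacks finite approximability on tournaments. Fix the quotient map $\pi\colon\Gamma\twoheadrightarrow\Int^2$ with finitely generated kernel $N=\langle n_1,\ldots,n_k\rangle$. Given any action $\alpha\colon\Int^2\to\Aut(X)$ on a countable tournament $X$, form the lifted $\Gamma$-action $\tilde\alpha=\alpha\circ\pi$. Suppose $\Gamma$ has finite approximability on tournaments; for any finite $F_0\subseteq\Int^2$ and $X_0\subseteq X$, pick a lift $\tilde F_0\subseteq\Gamma$ of $F_0$ and apply the approximability of $\tilde\alpha$ to the finite set $F=\{n_1,\ldots,n_k\}\cup\tilde F_0$. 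The resulting approximation $(Y,\beta,e)$ satisfies $\beta(n_i)e(x)=e(\tilde\alpha(n_i)x)=e(x)$ for each $i$ and $x\in X_0$, so $\beta(N)$ fixes $e(X_0)$ pointwise. Using normality of $N$ together with the identity $\beta(n)\beta(g)e(x)=\beta(g)\beta(g^{-1}ng)e(x)=\beta(g)e(x)$, the subgroup $\beta(N)$ in fact fixes the entire $\Gamma$-orbit of $e(X_0)$ in $Y$. Restricting $\beta$ to this orbit yields a well-defined action factoring through $\Gamma/N\cong\Int^2$ that finitely approximates $\alpha$ on $F_0\times X_0$, reducing the theorem to the statement that $\Int^2$ has no finite approximability on tournaments. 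The "in particular" assertion then follows because any finitely generated torsion-free nilpotent group other than $\Int$ has a finitely generated commutator subgroup and an abelianization of rank at least $2$, hence surjects onto $\Int^2$ with finitely generated kernel.

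For the core step, the task is to exhibit a specific action $\alpha\colon\Int^2\to\Aut(X)$ together with a finite window $F\times X_0$ that admits no finite approximation. The principal tool is Lemma \ref{lem_goodsubgrps}: point-stabilizers in tournament actions are exactly the good subgroups, and consequently any finite approximation $\beta\colon\Int^2\to\Aut(Y)$ factors through a finite quotient $\Int^2/S$ where $S$ is a good finite-index sublattice (equivalently one of odd-order quotient), and every stabilizer in $Y$ is such a sublattice. The candidate $\alpha$ should exploit the rank-$2$ nature of $\Int^2$, most naturally through a multi-orbit action $X=\bigsqcup_i\Int^2/H_i$ with good stabilizers $H_i$ and cross-orbit tournament relations arranged so that no choice of good finite-index overlattices $H_i'\supseteq H_i$ simultaneously respects all the prescribed arrows within and between orbits on the finite piece $F\cdot X_0$. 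Equivalently, the cross-orbit relations should encode a system of arithmetic constraints in $\Int^2/(H_i'+H_j')$ that every odd-index choice of $H_i',H_j'$ fails to satisfy.

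The main obstacle is engineering the precise configuration that defeats every good sublattice. For a free translation action of $\Int^2$ on itself with any translation-invariant tournament set $P$, a fixed finite window $F$ is always approximable via $\Int^2/m\Int^2$ for $m$ odd and sufficiently large, since the set $(P\cap(F-F))+(P\cap(F-F))$ is bounded and hence disjoint from $m\Int^2\setminus\{0\}$ for large $m$. Thus the non-approximable action must genuinely use nontrivial stabilizers or multiple interacting orbits: the 2D structure of $\Int^2$ should let one impose one constraint on $S$ from each of the two independent directions so that together they become incompatible in every odd-order quotient, whereas the 1D structure of $\Int$ admits only a single direction of constraint, always absorbable by a large odd cyclic quotient. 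Identifying the explicit stabilizers $H_i$ and the cross-relations, and verifying the non-existence of a compatible good finite-index sublattice $S$ for a suitably chosen $F$, is the combinatorial heart of the proof.
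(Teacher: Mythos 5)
Your reduction to the case $\Gamma=\Int^2$ is correct and is essentially what the paper does (the paper compresses it to one sentence: act through the quotient on the same tournament and put the finitely many generators of $N$ into the window, so that any finite approximation also factors through $\Int^2$ on the relevant orbits). Your derivation of the ``in particular'' clause is also fine: a finitely generated torsion-free nilpotent group other than $\Int$ has abelianization of torsion-free rank at least $2$ and all of its subgroups are finitely generated, so it surjects onto $\Int^2$ with finitely generated kernel. Your side remark that the free translation action of $\Int^2$ on itself is always finitely approximable via $\Int^2/m\Int^2$ for large odd $m$ is correct, and it correctly tells you that the counterexample must use nontrivial stabilizers and interacting orbits.

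However, the core of the theorem --- the explicit non-approximable action --- is precisely the part you leave unconstructed, and without it there is no proof; identifying the stabilizers and cross-relations is not a routine verification one can defer. For comparison, the paper's example is a two-orbit action on $X=\Int^2/H_x\sqcup\Int^2/H_y$ with $H_x=\langle b\rangle$ and $H_y=\langle 2a-b\rangle$. Both quotients are infinite cyclic, so each orbit carries an invariant tournament, and since $H_x+H_y=\langle b,2a\rangle$ has index $2$ with $a$ representing the nontrivial class, one can consistently prescribe the cross-orbit arrows $R(y,x)$ and $R(x,a.y)$; the finite window consists of generators of the two stabilizers together with these two arrows. If $K_x\supseteq H_x$ and $K_y\supseteq H_y$ are the stabilizers in a putative finite approximation, then $b\in K_x$ and goodness of $K_x$ (Lemma \ref{lem_goodsubgrps} together with Remark \ref{rem:goodsubgroups}) forces the image of $a$ in $\Int^2/K_x$ to have odd order, i.e.\ $na\in K_x$ for some odd $n$; since $(n+1)a=\frac{n+1}{2}(2a-b)+\frac{n+1}{2}b\in H_y+H_x$, one gets $a=(n+1)a-na\in K_x+K_y$, so $(x,y)$ and $(x,a.y)$ lie in the same orbit of pairs and the two prescribed arrows cannot both be preserved. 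This parity clash --- odd order forced by goodness against the even order built into $H_x+H_y$ --- is exactly the pair of incompatible constraints your plan calls for; the proposal stops just short of producing it, so as written it has a genuine gap at the decisive step.
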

\begin{proof}
First we treat the case when $\Gamma$ is actually equal to $\Int^2$. Let us define the following action of $\Int^2$ on a tournament $R$. Denote the two canonical generators of $\Int^2$ by $a$ and $b$. The tournament will have two infinite orbits with base points $x$ and $y$. The stabilizer $H_x$ of $x$ will be equal to $\langle b\rangle$, and the stabilizer $H_y$ of $y$ will be equal $\langle 2a-b\rangle$. Therefore the set of vertices of the tournament can be identified with the set $\Int^2/H_x \sqcup \Int^2/H_y$, and $\Int^2$ will act on it by translation.

We define the tournament on the first orbit as follows: we set $R(x,(na).x)$, for $n>0$, and $R((na).x,x)$ for all $n<0$. Since $\Int^2/\langle b\rangle$ is isomorphic to $\langle a'\rangle$, where $a'$ is the projection of $a$ on the quotient, this is easily checked to give rise to a tournament relation on the orbit of $x$ that is invariant under the action of $\Int$.

Also, the quotient $\Int^2/\langle 2a-b\rangle$ is isomorphic to $\langle a'\rangle$, where $a'$ is the projection of $a$ on the quotient. It follows that, as before, we can define an invariant tournament relation on the second orbit by setting $R(y,(na).y)$, for $n>0$, and $R(na).y,y)$, for $n<0$.

Finally, since $\Int^2/(H_x+H_y)\cong \Int_2$, where the only non-zero element $a''$ of order two is the projection of $a$ on the quotient, to define the tournament for elements from different orbits, it suffices to set $R(y,x)$, and so
$$R((h_x+h_y).y,(h'_x+h'_y).x)), \  R((a+h_x+h_y).y,(a+h'_x+h'_y).x)),$$
for $h_x,h'_x \in H_x$, $h_y,h'_y \in H_y$, and to set $R(x,a.y)$, and so
$$R((h_x+h_y).x,a+(h'_x+h'_y).y), \  R((a+h_x+h_y).x,(h'_x+h'_y).y)),$$
for $h_x,h'_x \in H_x$, $h_y,h'_y \in H_y$.\\

We claim that this tournament is not finitely approximable. To be more specific, we claim that it is not possible to finitely approximate the two finitely generated stabilizers of $x$ and $y$, and the relations $R(y,x)$, $R(x,a.y)$.

Suppose otherwise that there is such a finite approximation, with the tournament relation denoted by $S$. Its stabilizers $K_x$, $K_y$ of $x$, $y$, respectively, contain $H_x$, $H_y$, respectively, so $K_x$ contains $b$. We claim that the projection of $a$ to the quotient $\Int^2/K_x$ has finite odd order. Otherwise, by Remark~\ref{rem:goodsubgroups}, $K_x$ is not good, and so, by Lemma~\ref{lem_goodsubgrps}, it cannot be a point-stabilizer for an action on a tournament. Therefore $K_x$ contains $na$ for some odd $n\in\Nat$. It follows that the projection of $a$ to the quotient $\Int^2/( K_x+K_y)$ must be trivial since it has odd order in $\Int^2/K_x$, where $K_x\subseteq K_x+K_y$, and even order in $\Int^2/(H_x+H_y)$, where again $H_x+H_y\subseteq K_x+K_y$; in other words, $a=(n+1)a-na=\frac{n+1}{2}b-na\in H_y+K_x\subseteq K_x+K_y$. Then, however, $S$ cannot satisfy $S(y,x)$ and $S(x,a.y)$.\\

In the general case, when $\Gamma$ is such that there is a finitely generated normal subgroup $N\leq \Gamma$ with $\Gamma/N\cong \Int^2$, we define an action on exactly the same tournament as above. The action of $\Gamma$ factorizes through the action of $\Int^2$, i.e., its kernel contains $N$. Since $N$ is finitely generated, we again reach a contradiction with finite approximability.
\end{proof}
The aim of the next definition is to isolate a Ribes-Zalesskii-like property of groups that would guarantee finite approximability of tournaments - in the same way as the 2-Ribes-Zalesskii property guarantees finite approximability for graphs. We shall call it tournament 2-Ribes-Zalesskii property as it is apparently very similar to the 2-Ribes-Zalesskii property, however not obviously equivalent, or weaker or stronger.
\begin{defin}\label{def_strong2RZ}
We say that $\Gamma$ has the \emph{tournament 2-RZ property} if
for any $g_i \in \Gamma$,  $i \leq n$, finitely generated good subgroups $K_i, H_i \leq \Gamma$, $i \leq n$, such that $g_i \not \in K_iH_i$, and any finitely generated good subgroups $M_j \leq \Gamma$, $j \leq m$, there exists a finite index normal subgroup $N\leq \Gamma$ such that

\begin{itemize}
\item $g_i \not \in K_i H_i N$ for each $i\leq n$ (in particular, $\Gamma$ has the 2-RZ property for good subgroups),
\item $M_j N$ is good for each $j\leq m$.
\end{itemize}

\end{defin}

\begin{thm}\label{thm_strong2RZ}
Suppose that $\Gamma$ is a finitely generated group with the tournament 2-RZ property. Then every action of $\Gamma$ on a tournament is finitely approximable.

Conversely, if every action of $\Gamma$ on a tournament is finitely approximable, and, moreover, Question \ref{quest_goodgrp} has a positive answer for $\Gamma$, then $\Gamma$ has the tournament 2-RZ property.
\end{thm}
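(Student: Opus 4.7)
The strategy is to prove the two directions by translating them into coset-space data, using Lemma~\ref{lem_goodsubgrps} to identify good subgroups with point-stabilizers of $\Gamma$-tournaments.

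For the forward direction, start with an action $\alpha$ of $\Gamma$ on a tournament $(X, R)$, a finite $F \subseteq \Gamma$ containing $1$ (WLOG symmetric), and a finite $X_0 \subseteq X$. Reduce $X_0$ to a set of orbit representatives $\{x_1, \ldots, x_k\}$ for the orbits meeting $F \cdot X_0$, and set $H_i = \mathrm{Stab}_\Gamma(x_i)$, which is good by Lemma~\ref{lem_goodsubgrps}. The aim is to choose finitely generated good subgroups $M_i$ with $M_i \cap (F \cdot F^{-1}) = H_i \cap (F \cdot F^{-1})$ and a finite-index normal subgroup $N \leq \Gamma$, so that $\Gamma$ acts by translation on the finite tournament $\bigsqcup_i \Gamma/(M_iN)$ and the map $f \cdot x_i \mapsto f(M_iN)$ (in the $i$-th component) realises the required embedding of $\alpha \restriction (F \times X_0)$. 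The conditions one must impose on $N$ match the tournament 2-RZ template: $M_iN$ good for each $i$; for each $f_1, f_2 \in F$ and $i$ with $f_1^{-1}f_2 \notin H_i$, the non-containment $f_1^{-1}f_2 \notin M_iN$ (taking $K = \{1\}$, $H = M_i$; note $f_1^{-1}f_2 \notin M_i$ by the choice of $M_i$); and, for each pair of indices $i, j$ and each pair of witnesses of opposite $R$-orientations across the $i$-th and $j$-th orbits, a non-containment ensuring the corresponding $(M_iN, M_jN)$-double cosets remain distinct in $\Gamma$ --- after conjugation and Lemma~\ref{lem_goodsubgrps2}(2), this again fits the form $g \notin KHN$. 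Applying tournament 2-RZ yields $N$, after which the finite tournament on $\bigsqcup_i \Gamma/(M_iN)$ is built by pushing down the observed $R$-relations and extending orientations arbitrarily, Lemma~\ref{lem_goodsubgrps} ensuring consistency. The main technical point is the choice of $M_i$: since goodness is not inherited by arbitrary subgroups (e.g.\ $2\Int \leq \Int$ is not good although $\Int$ is trivially so), $M_i$ cannot simply be $\langle H_i \cap F \cdot F^{-1}\rangle$, and closure properties (Lemma~\ref{lem_goodsubgrps2}) together with the finitely-generated hypothesis on $\Gamma$ will be needed.

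For the converse direction, given data $g_i, K_i, H_i, M_j$ as in Definition~\ref{def_strong2RZ}, construct a single $\Gamma$-action on a tournament $(X, R)$ that packages this data: for each $i$ include orbits $\Gamma/K_i$ and $\Gamma/H_i$ with cross-orbit relation set so that $R(K_i, g_iH_i)$ holds but $R(K_i, H_i)$ fails (consistent with the tournament axiom since $g_i \notin K_iH_i$), and for each $j$ include an orbit $\Gamma/M_j$ with its own tournament structure. The positive answer to Question~\ref{quest_goodgrp} is invoked here to ensure that products of good subgroups arising in cross-orbit stabilizer calculations remain good, so the tournament can be assembled coherently. Setting $F = \{1\} \cup \{g_i\}_i$ and $X_0$ to contain all chosen representatives, finite approximability yields a finite $\Gamma$-tournament $(Y, \beta)$ with equivariant embedding $e$. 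Let $P$ be the intersection of the $\beta$-stabilizers of $e(1, K_i), e(1, H_i), e(1, M_j)$ --- a good finite-index subgroup by Lemmas~\ref{lem_goodsubgrps} and \ref{lem_goodsubgrps2}(1) --- and let $N$ be its normal core, which by Lemma~\ref{lem_goodsubgrps2}(3) is a finite-index, normal, good subgroup of $\Gamma$. Then $M_jN$ is good by the positive answer to Question~\ref{quest_goodgrp} (both $M_j$ and $N$ good, $N$ normal). For the non-containment $g_i \notin K_iH_iN$: if $g_i = khn$ with $k \in K_i$, $h \in H_i$, $n \in N$, then $h$ and $n$ stabilize $e(1, H_i)$ and $k$ stabilizes $e(1, K_i)$, so $e$-preservation of $R(K_i, g_iH_i)$ in $X$ would descend to $R(e(1, K_i), e(1, H_i))$ in $Y$, contradicting the preservation of the failure $\neg R(K_i, H_i)$.
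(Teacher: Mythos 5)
Your overall strategy coincides with the paper's in both directions: reduce everything to translation actions on coset spaces, encode the tournament data as disjointness conditions to be preserved modulo a finite-index normal subgroup supplied by the tournament 2-RZ property, and, conversely, package the 2-RZ data into a single $\Gamma$-tournament and extract $N$ from the stabilizers of a finite approximation. The converse direction is essentially the paper's argument and is correct up to one small repair: your test set $F=\{1\}\cup\{g_i\}_i$ must also contain generating sets of the $K_i$ and $H_i$, since the step ``$h$ and $n$ stabilize $e(1,H_i)$, $k$ stabilizes $e(1,K_i)$'' needs $K_i\leq K_i'$ and $H_i\leq H_i'$ for the stabilizers $K_i',H_i'$ of the finite approximation, and that containment is only guaranteed when the approximation is taken over those generators. (Also, Question~\ref{quest_goodgrp} is not needed to assemble the auxiliary tournament $X$ --- Lemma~\ref{lem_goodsubgrps} on each orbit plus arbitrary consistent cross-orbit arrows suffices; it is used only at the very end to conclude that $M_jN$ is good.)

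The forward direction has a genuine gap in the conditions you feed into the 2-RZ property. For two witnesses $f,g\in F_j$ of within-orbit arrows $R(x_j,f.x_j)$ and $R(x_j,g.x_j)$, the condition needed for the quotient relation on $\Gamma/(M_jN)$ to remain a tournament is $fM_jg\cap M_jN=\emptyset$, equivalently $gf\notin (gM_jg^{-1})M_jN$ --- a product of \emph{two} good subgroups. Your within-orbit clause $f_1^{-1}f_2\notin M_iN$ (with $K=\{1\}$) only guarantees injectivity of the embedding; it does not exclude $g\in M_jf^{-1}M_jN$, which by invariance of $S$ would force both $S(M_jN,fM_jN)$ and $S(fM_jN,M_jN)$. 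So already the single-orbit case uses the full two-subgroup form of the property, not mere separation of cosets; your ``opposite orientations across the $i$-th and $j$-th orbits'' clause, as written, only covers $i\neq j$. Beyond this, two substantial pieces are left unresolved. First, you explicitly defer the choice of the finitely generated good subgroups $M_i$ (the paper takes $M_j$ to be a finitely generated stabilizer of $x_j$ and gets goodness from Lemma~\ref{lem_goodsubgrps}). Second, ``extending orientations arbitrarily'' does not finish the construction: the paper must extend each $F_j$ and $F_{i,j}$ to a \emph{maximal consistent} family $F_j'$, $F_{i,j}'$ --- rerunning the Zorn-type argument of Lemma~\ref{lem_goodsubgrps} seeded with the prescribed arrows, where goodness of $M_jN$ (the third 2-RZ condition) is exactly what prevents the maximality argument from getting stuck --- and then verify totality and antisymmetry of the resulting relation $S$ on $\bigsqcup_j\Gamma/(M_jN)$. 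This verification occupies most of the actual proof and is absent from your sketch.
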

Before proceeding to the proof, we verify this condition for $\Int$.
\begin{prop}
\label{pr:Ztour2RZ}
The group $\Int$ has the tournament 2-RZ property, and so $\Int$ has finite approximability on tournaments.
\end{prop}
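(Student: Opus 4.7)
The plan is to exploit the very simple subgroup structure of $\Int$: every subgroup has the form $k\Int$ for some $k\geq 0$, is automatically finitely generated, and by Remark~\ref{rem:goodsubgroups} is good precisely when $k=0$ or $k$ is odd (since $\Int/k\Int$ has an element of order $2$ iff $k$ is even). Finite index normal subgroups of $\Int$ are exactly the subgroups $n\Int$ with $n\geq 1$. So the tournament 2-RZ property for $\Int$ reduces to a purely number-theoretic statement: given good subgroups $K_i=a_i\Int$, $H_i=b_i\Int$ and $M_j=c_j\Int$ (each of $a_i,b_i,c_j$ being $0$ or an odd positive integer), together with elements $g_i$ lying outside $K_i+H_i=d_i\Int$ where $d_i=\gcd(a_i,b_i)$ (so $d_i$ is odd or zero), I must exhibit $n\geq 1$ with $\gcd(d_i,n)\nmid g_i$ for all $i$ and $\gcd(c_j,n)$ odd for all $j$.

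The key observation driving the construction is that every prime divisor of every $d_i$ is odd. I would build $n$ in two steps. First, for each index $i$ with $d_i>0$, the hypothesis $d_i\nmid g_i$ produces an odd prime power $q_i=p_i^{e_i}$ with $q_i\mid d_i$ and $q_i\nmid g_i$; include each such $q_i$ as a factor of $n$. Second, for each index $i$ with $d_i=0$ the hypothesis forces $g_i\neq 0$, and multiplying $n$ by a single odd prime larger than $\max_i|g_i|$ disposes of all such indices at once by ensuring $n>|g_i|$. The resulting $n$ is a product of odd prime powers, hence odd.

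Verifying the two bullets of Definition~\ref{def_strong2RZ} is then straightforward: when $d_i>0$, the prime power $q_i$ divides $\gcd(d_i,n)$ but not $g_i$, so $\gcd(d_i,n)\nmid g_i$; when $d_i=0$, $\gcd(d_i,n)=n>|g_i|$ likewise fails to divide $g_i$. For the goodness bullet, $\gcd(c_j,n)$ either divides the odd number $c_j$ (when $c_j>0$) or equals the odd number $n$ (when $c_j=0$), so it is odd in either case. Finite approximability of all actions of $\Int$ on tournaments then follows at once from Theorem~\ref{thm_strong2RZ}, since $\Int$ is finitely generated. The only real subtlety is the potential clash between the divisibility constraints (needed to separate $g_i$ from $K_i+H_i+N$) and the parity constraint (needed to preserve goodness of $M_j+N$ when $c_j=0$); this is precisely defused by the observation that good subgroups of $\Int$ have odd or zero generators, so every divisibility requirement on $n$ can be met using only odd prime powers and never conflicts with oddness.
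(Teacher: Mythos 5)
Your proof is correct and follows essentially the same route as the paper's: both reduce the tournament 2-RZ property of $\Int$ to elementary number theory via the observation that good subgroups of $\Int$ are exactly the $k\Int$ with $k$ odd or zero, and then produce an odd modulus $n$ meeting all constraints at once. The only (immaterial) difference is that the paper simply takes $n$ to be an odd common multiple of all nonzero $c_j$ and $d_i$ that exceeds every $|g_i|$ --- so that $K_i+H_i+N=K_i+H_i$ whenever $d_i\neq 0$ --- whereas you assemble $n$ from prime-power witnesses $q_i\mid d_i$, $q_i\nmid g_i$.
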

\begin{proof}
Fix good subgroups $K_i,H_i \leq \Int$, $i\leq n$, $M_j \leq \Int$, $j \leq m$, and fix $g_i \in \Int$, $i \leq n$, such that $g_i \notin K_i+H_i$. It follows that each $M_j$ is of the form $c_j\Int$, where $c_j \geq 0$ is odd  or equal to $0$, and also each $K_i+H_i$ is of the form $d_i\Int$, where $d_i \geq 0$ is odd or equal to $0$. We choose $c>1$ which is a multiple of all non-zero $c_j$, $j\leq m$, and non-zero $d_i$, $i\leq n$, and, moreover, is strictly greater than all $|g_i|$, $i\leq n$. We set $N=c\Int$. It is straightforward to verify that each $M_j+N$ is equal to either $M_j$ or $N$ (if $c_j=0$), and so it is good. Similarly, each $K_i+H_i+N$ is equal to either $K_i+H_i$ or $N$ (if $d_i=0$.) Since $c>|g_i|$, in both cases we get that $g_i\notin K_i+H_i+N$.
\end{proof}

Without loss of generality, we can assume that no $M_j$ is equal to $\Int$ or the trivial subgroup.

\begin{proof}[Proof of Theorem \ref{thm_strong2RZ}]
Let $\Gamma$ be a finitely generated group with the tournament 2-RZ property, and suppose that $\Gamma$ acts on a tournament $(X,R)$. Take now some finite partial subaction $\alpha$ of $\Gamma$ on $X$. We may suppose that $\alpha$ is given by the following data:
\begin{itemize}
\item finitely many orbits $O_1,\ldots,O_m$, each $O_j$ with some base point $x_j$ and a finitely generated stabilizer $M_j$ of $x_j$;
\item for each $j\leq m$ a finite subset $F_j\subseteq \Gamma$ such that for each $f\in F_j$ we have $R(x_j, f.x_j)$;
\item for every $i\neq j\leq m$ we have a finite subset $F_{i,j}\subseteq \Gamma$ such that for every $f\in F_{i,j}$ we have $R(x_i,f.x_j)$.

\end{itemize}
Moreover, we may and will assume that for every $j\leq m$, $F_j$ intersects each left coset of $M_j$ in at most one element. This is clear since if $f,f'$ lie in the same left coset of $M_j$ and $R(x_j,f.x_j)$, then automatically $R(x_j,f'.x_j)$. Analogously, we will assume that for every $i\neq j\leq m$, each $F_{i,j}$ intersects each double coset $M_i\backslash \Gamma/M_j$ in at most one element.

Notice that for every $j\leq m$ and any two elements $f,g\in F_j$ we have 
\[ f M_j g\cap M_j=\emptyset,\]
and for every $i\neq j\leq m$ and $f\in F_{i,j}$ and $g\in F_{j,i}$ we have
\[f M_j\cap M_i g^{-1}=\emptyset.\]

We show the former, the latter is proved analogously. Suppose that $f h_1 g=h_2$, for some $h_1,h_2\in M_j$, i.e. $g=h f^{-1} h'$, for some $h,h'\in M_j$. We have $R(x_j,g.x_j)$ and $R(f^{-1}.x_j,x_j)$; thus, since $h$ and $h'$ fix $x_j$, we have $R(h f^{-1} h'.x_j,x_j)$ which implies $R(g.x_j,x_j)$, and that is a contradiction.

Notice that the conditions $f M_j g\cap M_j=\emptyset$, resp. $f M_j\cap M_i g^{-1}=\emptyset$ are equivalent to the conditions $gf\notin (g M_j g^{-1}) M_j $, resp. $gf\notin (g M_i g^{-1}) M_j$. Therefore, for every $j\leq n$ and every pair $f,g\in F_j$, if we set $g_{j,f,g}=gf$, $K_{j,f,g}=g M_j g^{-1}$ and $H_{j,f,g}=M_j$, we get $g_{j,f,g}\notin K_{j,f,g} H_{j,f,g}$. Analogously, for every $i\neq j\leq m$ and $f\in F_{i,j}$ and $g\in F_{j,i}$, if we set $g_{i,j,f,g}=gf$, $K_{i,j,f,g}=g M_i g^{-1}$ and $H_{i,j,f,g}=M_j$, we get $g_{i,j,f,g}\notin K_{i,j,f,g} H_{i,j,f,g}$. We enumerate all these triples as $(g_i,K_i,H_i)_{i\leq n}$ and by applying the tournament 2-RZ property (since by Lemmas \ref{lem_goodsubgrps} and \ref{lem_goodsubgrps2} all the subgroups involved are good) we can find a finite index normal subgroup $N$ satisfying:
\begin{enumerate}
\item\label{it1} for every $j\leq m$ and every $f,g\in F_j$ we have 
\[ f M_j g\cap M_j N=\emptyset;\]
\item\label{it2} for every $i\neq j\leq m$ and every $f\in F_{i,j}$ and $g\in F_{j,i}$ we have 
\[M_i f M_j \cap M_i g^{-1} M_j N=\emptyset\]
(this is equivalent to $g f\notin (g M_i g^{-1}) M_j N$);

\item\label{it3} for every $j\leq m$, $M_j N$ is good.\\

\end{enumerate}

Now we define a finite tournament extending this finite fragment. The underlying set is
\[ Y=\Gamma/(M_1 N)\sqcup \ldots \sqcup \Gamma/(M_m N),\]

and the action $\beta$ of $\Gamma$ on it is the natural one by left multiplication.
For every $j\leq m$, let $F'_j\subseteq \Gamma$ be a finite subset, which intersects each left coset of $M_j N$ in at most a singleton, satisfying
\begin{itemize}
\item $F_j\subseteq F'_j$ (extension);
\item for every $f,g\in F'_j$ we have
\[ f M_j g\cap M_j N=\emptyset \mbox{ (consistency)};\]
\item for every $f\in \Gamma\setminus (F'_j M_j N)$ there exist $g\in F'_j$ and $h\in M_j N$ such that either $f h g\in M_j N$ or $g h f\in M_j N$ (maximality).

\end{itemize}
It is possible to find such a set by \eqref{it1} and \eqref{it3}. Indeed, by \eqref{it1} we have that that for every $f,g\in F_j$, 
\[ f M_j g \cap M_j N=\emptyset. \]
We want to extend $F_j$ as much as possible still satisfying this consistency property. Suppose that $F'_j$ is a maximal subset (intersecting each left coset of $M_j N$ in at most singleton) containing $F_j$ and satisfying that for every $f,g\in F'_j$ we have 
\[ f M_j g M_j\cap M_j N=\emptyset.\]
Suppose the third condition is not satisfied, i.e. there is $f\in \Gamma\setminus (F'_j M_j N)$ such that for every $g\in F'_j$ and $h\in M_j N$ neither $f h g\in M_j N$, nor $g h f\in M_j N$. Then we claim we may extend $F'_j$ by $f$ contradicting its maximality. Suppose that it is not the case. It means that adding $f$ violated the consistency condition, i.e., $f M_j g\cap M_j N\neq \emptyset$ or $g M_j f\cap M_j N\neq \emptyset$, for some $g\in F'_j\cup\{f\}$. By our assumption, $g$ is not from $F'_j$, so necessarily $g=f$, and  $f h f\in M_j N$ for some $h\in M_j N$. This, however, contradicts \eqref{it3}. Clearly, $F'_j$ is finite, since $M_jN$ has finite index.\\

Analogously, for each $i\neq j\leq m$, let $F'_{i,j}\subseteq \Gamma$ be a finite subset, which intersects each double coset $M_iN\backslash \Gamma/ M_jN$ in at most singleton, satisfying
\begin{itemize}
\item $F_{i,j}\subseteq F'_{i,j}$ (extension);
\item for every $i\neq j\leq m$ and every $f\in F'_{i,j}$ and $g\in F'_{j,i}$ we have
\[ M_i f M_j\cap M_i g^{-1} M_j N=\emptyset \mbox{ (consistency)};\]
\item for every $f\in \Gamma$ there are $h_i\in M_i N$ and $h_j\in M_j N$ such that either $h_i f h_j\in F'_{i,j}$ or $h_j f^{-1} h_i\in F'_{j,i}$ (maximality).

\end{itemize}
It is possible to find such sets $F'_{i,j}$ as follows. By \eqref{it2} we have that $F_{i,j}$ satisfies the consistency condition above, so we again want to extend $F_{i,j}$ as much as possible. Suppose that $i>j$ then we set $F'_{i,j}=F_{i,j}$. So suppose that $i<j$. Then for each double coset\\ ${(M_i N \backslash \Gamma/ M_j N)\setminus ((\bigcup_{f\in F_{i,j}} M_i N f M_j N)\cup(\bigcup_{g\in F_{j,i}} M_i N g^{-1} M_j N))}$ we choose some representative and put it into $F'_{i,j}$.

Let us now show that such $F'_{i,j}$ satisfies the consistency and maximality conditions. It follows from the symmetricity in definitions that verifying consistency, resp. maximality for $F'_{i,j}$, verifies it also for $F'_{j,i}$. So we may suppose that $i<j$. To prove the maximality, choose some $f\in\Gamma$ and suppose that there are no $h_1\in M_i N$ and $h_2\in M_j N$ such that $h_2 f^{-1} h_1\in F_{j,i}=F'_{j,i}$. Then by the definition of $F'_{i,j}$ we put some representative of the double coset $M_i N f M_j N$ into $F'_{i,j}$ which proves the maximality condition. Now we show consistency. If $F'_{i,j}$ were not consistent, there would be $f\in F'_{i,j}$ and $g\in F'_{j,i}=F_{j,i}$ with $M_i f M_j\cap M_i g^{-1} M_j N\neq\emptyset$. The element $f$ cannot be from $F_{i,j}$ since that would contradict \eqref{it2}; so $f\in F'_{i,j}\setminus F_{i,j}$, which is also a contradiction since $f$ was chosen not to be from the double coset $M_i g^{-1} M_j$. \\

Now we define the tournament relation $S$ on $Y$. For any $j\leq m$ and any $f,g\in \Gamma$ we set $S(f M_j N, g M_j N)$ if and only if for some $h\in M_j N$ we have $f^{-1} g h\in F'_j$. We claim that exactly one of the options $S(f M_j N, g M_j N)$ and $S(g M_j N, f M_j N)$ happens. To simplify the notation, we show that for any $f\in \Gamma$ either $S(M_j N, f M_j N)$ or $S(f M_j N,M_j N)$ happens. First we show that at least one of the options happens, then that at most one of them happens.

If $f\in F'_j M_j N$, then $S(M_j N, f M_j N)$ by definition. So suppose that $f\in \Gamma\setminus (F'_j M_j N)$. Then by the maximality condition there exist $g\in F'_j$ and $h\in M_j N$ such that either $f h g\in M_j N$ or $g h f\in M_j N$. Suppose the former. Then we have $$S(M_j N, g M_j N)\Leftrightarrow S(f h M_j N,f h g M_j N)\Leftrightarrow S(f M_j N, M_j N).$$  The latter condition is treated analogously.

Now we show that at most one of the conditions happens. Suppose on the contrary that both $S(M_j N, f M_j N)$ and $S(f M_j N,M_j N)$ hold. Then however $S(M_j N, f^{-1} M_j N)$ holds, therefore there are $h_1,h_2\in M_j N$ such that $f h_1, f^{-1} h_2\in F'_j$. This violates the consistency condition though as we have 
\[ f h_1 M_j N f^{-1} h_2 M_j N= M_j N.\]

Now for $i\neq j\leq m$ we set $S(f M_i N, g M_j N)$ if there are $h_i\in M_i$, $h_j\in M_j$ and $g'\in F'_{i,j}$ such that $f^{-1} g=h_i g' h_j$. We again claim that exactly one of the options  $S(f M_i N, g M_j N)$ and  $S(g M_j N, f M_i N)$ happens. Again it suffices to check that for $f\in \Gamma$ exactly one of the options  $S(M_i N, f M_j N)$ and  $S(f M_j N, M_i N)$ happens. First we show that if at least one of the options happens, then that at most one of them happens.

By the maximality condition there are $h_i\in M_i$ and $h_j\in M_j$ such that either $h_i f h_j\in F_{i,j}$ or $h_j f^{-1} h_i\in F_{j,i}$. In the first case we clearly have that $S(M_i N, f M_j N)$ holds true, while in the latter case we have $S(M_j N, f^{-1} M_i N)$, therefore $S(f M_j N, M_i N)$.

Suppose now that both $S(M_i N, f M_j N)$ and  $S(f M_j N, M_i N)$ hold true. Then there are $h_i,h_j,h'_i,h'_j$ such that $h_i f h_j\in F'_{i,j}$ and $h'_j f^{-1} h'_i\in F'_{j,i}$. Then it clearly violates the consistency condition above since
\[ M_i h_i f h_j M_j\cap M_i (h')^{-1}_i f (h')^{-1}_j M_j N\neq \emptyset.\]

The desired partially defined embedding $e:X\rightarrow Y$ sends for each $j\leq m$: $x_j$ to $M_jN$ viewed as a base point of an orbit in $Y=\Gamma/(M_1N) \sqcup \ldots \sqcup \Gamma/(M_mN)$, and $g.x_j$ to $gM_jN$, for $g\in F_j$ and $g\in F_{i,j}$ where $i\neq j\leq m$. This clearly satisfies what is needed.
\bigskip

Now we prove the reverse implication. We suppose that every action of $\Gamma$ on a tournament is finitely approximable and moreover that Question \ref{quest_goodgrp} has a positive answer for $\Gamma$. We show that $\Gamma$ has the tournament 2-RZ property.

Take some finite number of triples $(g_1,K_1, H_1)$,\ldots, $(g_n,K_n, H_n)$ where $g_i\notin K_i H_i$ and $K_i$ and $H_i$ are finitely generated good subgroups, for $i\leq n$, and some finite number of finitely generated good subgroups $M_1,\ldots, M_m$. We define an action of $\Gamma$ on a tournament $(X,R)$. The underlying set $X$ will be the disjoint union (of orbits) 
\[ (\bigsqcup_{i\leq n} \Gamma/K_i \sqcup \Gamma/H_i)\sqcup \bigsqcup_{j\leq m} \Gamma/M_j,\]
and the action of $\Gamma$ is the canonical one. On each orbit of $X$ we define a tournament structure using Lemma \ref{lem_goodsubgrps}. Now the arrows between different orbits are defined arbitrarily just to satisfy that for all $i\leq n$ we have $R(K_i, g_i H_i)$ and $R(H_i, K_i)$. 

We want to finitely approximate this tournament action so that the stabilizers of the orbits are preserved and so that for each $i\leq n$ the relations $R(K_i, g_i H_i)$ and $R(H_i, K_i)$ are preserved. Therefore we get an action of $\Gamma$ on a finite tournament $(Y,S)$ with $2n+m$ orbits with stabilizers $K_i\leq K'_i$, $H_i\leq H'_i$, for $i\leq n$, and $M_j\leq M'_j$, for $j\leq m$, i.e. we may view $Y$ as
\[ Y=(\bigsqcup_{i\leq n} \Gamma/K'_i \sqcup \Gamma/H'_i) \sqcup \bigsqcup_{j\leq m} \Gamma/M'_j \]
with the natural action of $\Gamma$. Moreover, we have for all $i\leq n$, $S(K'_i, g_i H'_i)$ and $S(H'_i, K'_i)$, which implies that that for every $i\leq n$, $g_i\notin K'_i H'_i$. Since all the stabilizers are good subgroups of finite index, using Lemma \ref{lem_goodsubgrps2} we can find good normal finite index subgroups $H''_i\leq H'_i$, $K''_i\leq K'_i$, for $i\leq n$, and $M''_j\leq M'_j$, for $j\leq n$. Again using Lemma \ref{lem_goodsubgrps2} we get that the intersection $N$ of all these good normal finite index subgroups is again a good normal finite index subgroup. Clearly, for every $i\leq n$ we have $g_i\notin K_i H_i N$. Now if the answer of Question \ref{quest_goodgrp} is positive for $\Gamma$, then $M_j N$ is good for every $j\leq m$, therefore $N$ is as desired, and we are done.
\end{proof}

\begin{question}
Do finitely generated free groups have the tournament 2-Ribes-Zalesskii property?
\end{question}

In our opinion, Proposition \ref{prop:Z2nonapprox} implying the lack of the tournament 2-RZ property for finitely generated abelian free groups $\Int^d$, $d\geq 2$, does not necessarily suggest that the answer to the above question is negative. In a somewhat related research, Kwiatkowska (\cite{Kw}) showed that finitely generated free groups have a generic representation in the Cantor space, while Hochman (\cite{Ho}) proved that $\Int^d$, $d\geq 2$, do not have one.
%As $\ZZ$ satisfies the strong $2$-RZ property, we get the following.

%\begin{cor}
%Every action of $\ZZ$ on a tournament is finitely approximable.
%\end{cor}

\subsection{$K_n$-free graphs}
Now we turn to triangle-free graphs. Using techniques from \cite{Ro}, we prove that the $2$-RZ and $3$-RZ properties are the lower and the upper bounds for finite approximability of actions on triangle-free graphs (and on $K_n$-free graphs).

\begin{thm}
\label{th:3RZKnFree}
Let $\Gamma$ be a countable group satisfying the $3$-RZ property. Then every action of $\Gamma$ on a triangle-free graph is finitely approximable. More generally, every action of $\Gamma$ on a $K_n$-free graph, for $n \geq 3$, is finitely approximable.
\end{thm}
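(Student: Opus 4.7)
I would adapt Rosendal's argument (\cite{Ro}) for graphs under 2-RZ, replacing 2-RZ with 3-RZ to handle the additional structure imposed by the $K_n$-freeness. Let $\Gamma$ act on a $K_n$-free graph $(X,E)$, and fix a finite fragment given by base points $x_1,\ldots,x_m$, finitely generated stabilizers $M_j=\Stab(x_j)$, and finite symmetric edge-witness sets $F_{ij}\subseteq\Gamma$, normalized to meet each double coset $M_i\backslash\Gamma/M_j$ at most once. The plan is to construct the approximation on $Y=\bigsqcup_j\Gamma/(M_jN)$ for a suitable finite-index normal subgroup $N\trianglelefteq\Gamma$, with natural $\Gamma$-action and edge relation
\[
E_Y(aM_{j_i}N,bM_{j_k}N)\iff a^{-1}b\in M_{j_i}F_{j_ij_k}M_{j_k}N.
\]

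The main task is to choose $N$ so that $(Y,E_Y)$ faithfully extends the fragment and is $K_n$-free. A hypothetical $K_n$ in $Y$ on vertices $v_\ell=g_\ell M_{j_\ell}N$ (WLOG $g_1=1$) with edge witnesses $f_{\ell\ell'}\in F_{j_\ell j_{\ell'}}$ unpacks as follows: parameterizing $g_\ell=m_\ell f_{1\ell}k_\ell$ for $\ell\geq 2$ (with $m_\ell\in M_{j_1}$ and $k_\ell\in M_{j_\ell}$) and absorbing the $k_\ell$'s using normality of $N$, one reduces to the existence of $m_2,\ldots,m_n\in M_{j_1}$ satisfying
\[
m_\ell^{-1}m_{\ell'}\in f_{1\ell}M_{j_\ell}f_{\ell\ell'}M_{j_{\ell'}}f_{1\ell'}^{-1}N
\]
for every pair $2\leq\ell<\ell'\leq n$. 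Since $X$ is $K_n$-free, for each fixed tuple of witnesses this system must have no solution in $\Gamma$ without the $N$'s.

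The crucial step is to certify each such non-solvability by finitely many non-memberships of the form $g\notin H_1H_2H_3$, where $H_i$ are finitely generated subgroups of $\Gamma$ obtained as $M_j$'s conjugated by elements of the $F_{ij}$'s. For $n=3$ this is immediate: $M_{j_1}\cap f_{12}M_{j_2}f_{23}M_{j_3}f_{13}^{-1}=\emptyset$ rewrites as $1\notin M_{j_1}f_{13}M_{j_3}f_{23}^{-1}M_{j_2}f_{12}^{-1}$, a three-subgroup-product statement after absorbing the element $f_{12}f_{13}^{-1}$ via conjugation. For $n>3$ the reduction is obtained by isolating within the system a three-vertex obstruction that already fails in $\Gamma$, or by iteratively eliminating the variables $m_\ell$ using the triangle case. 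Since only finitely many non-memberships arise from the finite fragment, 3-RZ applied to each produces a common finite-index normal $N$ preserving all of them modulo $N$; one further refines $N$ to avoid introducing spurious edges among the fragment. Any hypothetical $K_n$ in $Y$ would then contradict one such preserved non-membership, so $(Y,E_Y)$ is $K_n$-free.

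The principal obstacle I anticipate is the combinatorial bookkeeping for $n>3$: one must verify that the unsolvability of an $n$-vertex obstructing system in $\Gamma$ is always certified by three-subgroup non-memberships, and that a single $N$ simultaneously handles every witness-choice produced by the fragment. The $n=3$ case follows directly from the algebraic identity above, but for $n>3$ one must inspect the interaction of the $\binom{n-1}{2}$ pairwise constraints carefully to argue that 3-RZ suffices to control the global obstruction uniformly, rather than yielding only triangle-freeness.
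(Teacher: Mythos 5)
Your proposal follows the same route as the paper, and for the triangle-free case the two arguments essentially coincide: the paper encodes edges as distance $1$ in a $\{1,2\}$-valued metric, rewrites triangle-freeness of the orbit data as the emptiness of the intersections $H_y f_1 H_x f_2 H_z \cap H_y f_3 H_z$ (equivalently, a non-membership in a translate of a product of three conjugates of finitely generated stabilizers), applies the $3$-RZ property to get a finite-index normal $K$ preserving these non-memberships together with $f \notin H_xK$ for $f \in F\setminus H_x$, and then builds $Y=\coprod_{x\in A}\Gamma/H_xK$ with exactly your edge relation $\rho(fL_x,gL_y)=1$ iff $f^{-1}g\in H_xg'H_yK$ for some edge witness $g'$. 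Your $n=3$ reduction to $1\notin M_{j_1}f_{12}M_{j_2}f_{23}M_{j_3}f_{13}^{-1}$ is the same identity the paper uses.

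Concerning the obstacle you single out for $n>3$: the paper resolves it by precisely the first of the two alternatives you mention, namely the assertion that $K_n$-freeness of $X$ forces, for every assignment of edge witnesses $f_{a,b}$ with $d(a,f_{a,b}.b)=1$ among the chosen orbits, at least one of the three-orbit intersections $H_b f_{a,b}^{-1}H_a f_{a,c}H_c\cap H_b f_{b,c}H_c$ to be empty already in $\Gamma$; a single application of $3$-RZ then keeps that one intersection empty modulo $K$, and any $K_n$ in $Y$ would in particular realize it. No iterative elimination of variables and no $k$-RZ for $k>3$ is invoked. Be aware, however, that the paper states this reduction in one sentence for $n=4$ and does not verify it further, so the point you call the principal obstacle --- that unsolvability of the full $\binom{n}{2}$-constraint system is always certified by the emptiness of one of its triangle subsystems, rather than by a genuinely global obstruction --- is exactly the step you would need to justify to turn your sketch (or the paper's) into a complete argument for $n\geq 4$; your write-up is already complete for $n=3$.
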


\begin{proof}
Fix an action $\alpha$ of $\Gamma$ on a triangle-free graph $X$ identified with a metric space $(X,d)$ with possible values of $d(x,y)$, $x \neq y \in X$, either $1$, if there is an edge between $x$ and $y$, or $2$ otherwise.  %Let $H_i \leq \Gamma$, $i \leq n$, be the stabilizers of $b_i$. 
For $x \in X$, let $H_x \leq \Gamma$ be the stabilizer of $x$. Observe that the assumption that $X$ is triangle-free means that  
\[ d(x,f_1.y)=d(x,f_2.z)=d(y,f_1^{-1}f_2.z)=1. \]
does not hold for any $x,y,z \in X$, and $f_1,f_2 \in \Gamma$. Moreover, distances must be constant on appropriate double cosets, i.e., for every $g_1 \in H_x f_1 H_y$, $g_2 \in H_x f_2 H_z$,
\[ d(x,g_1.y)=d(x,g_2.z)=d(y,g_1^{-1}g_2.z)=1. \]
does not hold either. But this is equivalent to saying that for every $f_1, f_2, f_3 \in \Gamma$ such that 
\[ d(x,f_1.y)=d(x,f_2.z)=d(y,f_3.z)=1 \]
we have
\[ H_y f_1 H_x f_2 H_z \cap H_y f_3 H_z= \emptyset, \]
or that
\[ H_y f_1 H_x f_2 H_z \cap \{f_3\}=\emptyset. \]
Actually, the above formula can be also written as
\[  f_1 f_2(f_2^{-1} f_1^{-1} H_y f_1 f_2)(f_2^{-1}H_x f_2) H_z \cap \{f_3\}=\emptyset. \]

Fix finite $F \subseteq \Gamma$ with $1 \in F$, and $A \subseteq X$. Without loss of generality, we can assume that no two elements of $A$ are in the same orbit under $\alpha$. Because $\Gamma$ has the $3$-RZ property, we can find a finite-index normal subgroup $K \leq \Gamma$ such that for every $f_1, f_2, f_3 \in F$, and $x,y,z \in A$, with 
\[ d(x,f_1.y)=d(x,f_2.z)=d(y,f_3.z)=1 \]
we have
%
%\[  H_y f_1 H_x f_2 H_z \cap f_3 K = \emptyset, \]
%
%that is,
%
\[  H_y f_1 H_x f_2 H_z \cap H_y f_3 H_z K = \emptyset, \]
and also, for every $f \in F$ and $x \in A$ with $f \not \in H_x$, 
\[ H_x \cap fK= H_xK \cap \{f\}  =\emptyset. \]

Let $L_x=H_xK$ for $x \in A$. We define a finite graph $Y= \coprod_{x \in A} \Gamma/L_x$ by specifying a metric $\rho$ on $Y$, with values in $\{0,1,2\}$, so that, for $fL_x \neq gL_y$,
\[ \rho(f L_x,g L_y)=1 \]
if and only if $f^{-1}g \in H_x g' H_yK$ for some $g' \in F$ with $d(x,g'.y)=1$.

Note first that $\rho$ is trivially a metric because the triangle inequality is satisfied for any mapping from $Y \times Y$ into $\{0,1,2\}$ that does not assign $0$ to pairs of the form $(y,y')$, $y \neq y'$. Also, it is clearly invariant under the left-translation action $\beta$ of $\Gamma$ on $Y$. We need to verify that $Y$ is triangle-free, and that $\alpha$, when restricted to $F$ and $A$, embeds into $\beta$ via the mapping $f.x \mapsto fL_x$, for $f \in F$, $x \in A$.

In order to see that $Y$ is triangle-free, suppose the contrary, and fix $x,y,z \in A$ and $g_1,g_2 \in \Gamma$ such that
\[ \rho(L_x,g_1 L_y)=\rho(L_x, g_2 L_z)=\rho(g_1L_y,g_2L_z)=1. \]
But this means that
\[ g_1 \in H_x f_1 H_yK, \, g_2 \in H_x f_2 H_yK, \, g_1^{-1}g_2 \in H_yf_3H_zK, \]
where $f_1, f_2, f_3 \in F$ are such that
\[ d(x,f_1.y)=d(x,f_2.z)=d(y,f_3.z)=1. \]
Since we have that
\[ KH_yf_1Hxf_2H_zK \cap H_yf_3H_zK=H_yf_1Hxf_2H_z \cap H_yf_3H_zK= \emptyset, \]
for any such $f_1,f_2,f_3$, this is impossible.

In a similar fashion, we can show that $f.x \mapsto fL_x$ is a mapping embedding $\alpha$ restricted to $F$ and $A$ into $\beta$.

The general statement for $K_n$-free graphs, $n \geq 3$, can be proved in the same way. For example, for $n=4$, the condition that $X$ does not contain $K_4$ means that, for any fixed set $X_0=\{x,y,z,w\} \subseteq X$ of $4$ elements coming from pairwise distinct orbits of $\alpha$, and $f_1,f_2,f_3 \in \Gamma$
\[ d(x,f_1.y)=d(x,f_2.z)=d(x,f_3.w)=d(y,f_1^{-1}f_2.z)=d(y,f_1^{-1} f_3.w)=d(z,f_2^{-1} f_3 w)=1 \]
does not hold. This can be rewritten as follows: for any $f_{a,b} \in \Gamma$, $a \neq b \in X_0$ such that $d(a, f_{a,b}.b)=1$, at least one of the intersections
\[ H_b f^{-1}_{a,b} H_a f_{a,c} H_c \cap H_b f_{b,c} H_c, \]
is empty. Now, because $\Gamma$ has the $3$-RZ property, we can construct a $K_4$-free graph $Y$ extending $X$ exactly as above.

\end{proof}

\begin{thm}
\label{th:2RZKnFree}
Let $\Gamma$ be a countable group whose actions on triangle-free graphs are finitely approximable. Then $\Gamma$ has the $2$-RZ property.
\end{thm}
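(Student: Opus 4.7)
The plan is to prove the contrapositive directly. Given finitely generated $H_1, H_2 \leq \Gamma$ and some $g \in \Gamma \setminus H_1 H_2$, I will construct an action of $\Gamma$ on a triangle-free graph whose finite approximability produces a finite-index normal subgroup $N \leq \Gamma$ with $g \notin H_1 H_2 N$. For the action $\alpha$, take $X := \Gamma/H_1 \sqcup \Gamma/H_2$ with base points $x = H_1$ and $y = H_2$, and declare the edges to be the $\Gamma$-orbit of $\{x, y\}$; concretely, $\{cH_1, dH_2\}$ is an edge iff $c^{-1}d \in H_1 H_2$. The graph is bipartite, hence triangle-free, and $\Gamma$ acts by left translation. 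By construction $\{x, y\}$ is an edge (since $1 \in H_1 H_2$), while $\{x, gy\}$ is a non-edge (since $g \notin H_1 H_2$).

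Next, apply the finite-approximability hypothesis with $F \subseteq \Gamma$ finite symmetric containing $1$, $g$, and finite generating sets of $H_1$ and $H_2$, and with $X_0 = \{x, y, gy\}$. This gives a finite triangle-free graph $Y$, an action $\beta$ of $\Gamma$ on $Y$, and an equivariant embedding $\iota \colon \alpha[F \times X_0] \to Y$ preserving edges and non-edges. Writing $x' = \iota(x)$, $y' = \iota(y)$, and letting $K_1, K_2$ be the $\beta$-stabilizers of $x', y'$, the inclusions $H_i \leq K_i$ hold because $F$ contains generators of $H_i$, while the edge $\{x', y'\}$ and the non-edge $\{x', \beta(g, y')\}$ persist in $Y$.

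The key inequality $g \notin K_1 K_2$ will then follow from $\Gamma$-equivariance alone: for $k_1 \in K_1, k_2 \in K_2$, since $k_2$ fixes $y'$ we have $\beta(k_1 k_2, y') = \beta(k_1, y')$, so applying $\beta(k_1, \cdot)$ to the edge $\{x', y'\}$ yields the edge $\{x', \beta(k_1 k_2, y')\}$. Hence $\{x', \beta(a, y')\}$ is an edge for every $a \in K_1 K_2$, and $g \in K_1 K_2$ would contradict the preserved non-edge $\{x', \beta(g, y')\}$. Setting $N := \ker(\beta)$, which is a finite-index normal subgroup of $\Gamma$ contained in $K_1 \cap K_2$ (since $Y$ is finite), one has $H_1 H_2 N \subseteq K_1 K_2 N = K_1 K_2$, so $g \notin H_1 H_2 N$, as needed. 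The one caveat is that $X$ is not locally finite in general (the degree of $cH_1$ equals $[H_1 : H_1 \cap H_2]$), so if the hypothesis requires $X$ to be a chain, one should either read triangle-free graphs as $\{1,2\}$-valued metric spaces as in the proof of Theorem \ref{th:3RZKnFree}, or first restrict to a $\Gamma$-invariant locally finite subgraph containing $\{x, y, gy\}$; either way, the approximation step only touches a finite sub-action, so this is a minor bookkeeping matter rather than the heart of the argument.
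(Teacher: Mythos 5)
Your proposal is correct and follows essentially the same route as the paper: the same bipartite graph $\Gamma/H_1 \sqcup \Gamma/H_2$ with $cH_1$ adjacent to $dH_2$ iff $c^{-1}d \in H_1H_2$, the same choice of $F$ and $X_0$, and the same equivariance argument showing the stabilizers $K_1, K_2$ of the images satisfy $g \notin K_1K_2 \supseteq H_1H_2N$. The only cosmetic differences are that the paper takes a finite-index normal subgroup inside $K_2$ rather than $\ker(\beta)$, and that your local-finiteness caveat is moot since the paper treats graphs as relational (or $\{1,2\}$-metric) structures, for which local finiteness is automatic.
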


\begin{proof}
Fix finitely generated subgroups $H_1, H_2$ of $\Gamma$, and $g \in \Gamma \setminus H_1H_2$. We need to show that there is a finite index normal subgroup $K \leq \Gamma$ such that $g \not \in H_1H_2 K$. Define a graph
\[ X=\Gamma/H_1 \sqcup \Gamma/H_2 \]
with metric with values in the set $\{0,1,2\}$, and satisfying, for $fH_i \neq gH_j$, $d(fH_i,g H_j)=1$ iff $i=1$, $j=2$, and $f H_i \cap g H_j \neq \emptyset$. Clearly, $X$ is triangle-free, and $\Gamma$ acts on $X$ by left-translation. Note also that $d(H_1,gH_2)=2$. Let $\beta$ be an action on a finite graph $Y$ such that the left-translation action of $\Gamma$ on $X$ embeds into $\beta$ when restricted to $A=\{H_1,H_2,gH_2\}$ and $F=\{\mbox{generators of }H_1, H_2\} \cup \{1,g\}$. Let $e:X \rightarrow Y$ be such an embedding, and let $K_i$ be the stabilizer of $e(H_i)$, $i=1,2$. As $K_2$ has finite index in $\Gamma$, we can fix $K \leq K_2$ which has finite index and is normal in $\Gamma$.

Observe that $H_i \leq K_i$. Moreover, $g \not \in K_1K_2$ because for every $k_1 \in K_1$, $k_2 \in K_2$ we have that
\[ d(H_1,k_1k_2H_2)=d(H_1,H_2)=1. \] 
Therefore $g \not \in H_1H_2K$.
\end{proof}

\begin{rem}
We do not know whether the above bounds are strict, i.e., whether there exists a group with the $2$-RZ property, and with an action on a triangle-free graph that is not finitely approximable, or a group whose actions on triangle-free graphs are finitely approximable but which does not have the $3$-RZ property.
\end{rem}

\section{Finite approximability and generic representations}

In this section, we study connections between finite approximability and generic representations in the context of \fra theory. Moreover, we present a simpler proof of genericity of permutation representations from \cite{GKM}.% We show that for finitely generated groups, and in presence of sufficiently free amalgamation, finite approximability is equivalent to the existence of a generic representation with finite orbits.

We say that a \fra class $\KK$ has \emph{independent amalgamation} if for any $A_0,A_1,A_2 \in \KK$ with $A_0 \leq A_1,A_2$ there exists an amalgam $B \in \KK$ of $A_1$ and $A_2$ over $A_0$ such that $A_1,A_2 \leq B$, and for all automorphisms $\phi_1$, $\phi_2$ of $A_1$, $A_2$, respectively, that agree on $A_0$, $\phi_1 \cup \phi_2$ extends to an automorphism of $B$. Analogously, we define \emph{independent joint embedding}. The simplest example of independent amalgamation is free amalgamation, present, e.g., in the class of finite graphs or $K_n$-free graphs.

Also, we say that a representation $\alpha$ of a group $\Gamma$ in the automorphism group $\Aut(M)$ of a structure $M$ has finite orbits if the orbit of every element of $M$ under the natural action of $\alpha[\Gamma]$ on $M$, is finite.

Recall that a chain from a \fra class $\KK$ is a locally finite structure $X$ such that $\age(X) \subseteq \KK$. It follows from \cite{KuMa} that if $\KK$ is a \fra class with a \kat functor, then for every chain $X$ in $\KK$ there exists an embedding of $X$ in the \fra limit $M$ of $\KK$ that gives rise to an embedding of $\Aut(X)$ in $\Aut(M)$. Thus, for \fra classes with a \kat functor (e.g., the class of finite graphs, by \cite[Example 2.5]{KuMa}, finite $K_n$-free graphs, by \cite[Example 2.10]{KuMa}, finite tournaments, by \cite[Example 2.6]{KuMa}, or finite metric spaces with rational distances, \cite[Example 2.4]{KuMa}), studying actions on chains (countable graphs, $K_n$-free graphs, tournaments or metric spaces with rational distances) reduces to studying actions on their \fra limits (the random graph, the random $K_n$-free graph, the random tournament or the rational Urysohn space.) 

\begin{thm}
\label{th:IndAmalg}
Let $\Gamma$ be a finitely generated group, and let $M$ be the \fra limit of a relational \fra class $\KK$ with independent amalgamation and joint embedding, and with a \kat functor. Then every action of $\Gamma$ on a chain from $\KK$ is finitely approximable if and only if $\Gamma$ has a generic representation in $\Aut(M)$ with finite orbits .
\end{thm}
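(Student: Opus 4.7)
My plan is to treat the two directions of the biconditional separately, with the forward direction being the main one. Throughout, the \kat functor for $\KK$ will play a crucial role: it transforms any finite action $\beta_0\colon\Gamma\to\Aut(Y)$ with $Y\in\KK$ into a representation $\psi_Y\circ\beta_0\in\Rep(\Gamma,\Aut(M))$ whose orbits are finite, since $\Aut(Y)$ is a finite group. The finite generation of $\Gamma$ enters to ensure that the class of finite $\Gamma$-actions on members of $\KK$ is countable up to isomorphism.

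For the forward direction, I consider the class $\KK_\Gamma=\{(A,\alpha):A\in\KK\text{ finite},\,\alpha\colon\Gamma\to\Aut(A)\}$ with $\Gamma$-equivariant embeddings as morphisms, and aim to exhibit a comeager set of representations in $\Rep(\Gamma,\Aut(M))$, each of which has finite orbits and makes $(M,\pi)$ a \fra limit of $\KK_\Gamma$. The first ingredient is that the set $R_{\text{fo}}$ of representations with finite orbits is comeager: for each $x\in M$,
\[
V_x=\bigcup_{\substack{Y\subseteq M\text{ finite}\\ x\in Y}}\{\rho\in\Rep(\Gamma,\Aut(M)):\rho(\gamma_i)(Y)\subseteq Y\text{ for every generator }\gamma_i\}
\]
is open, and it is dense because, given $\rho_0$ in a basic open $U$, finite approximability applied to the action $\rho_0$ on the chain $M$ yields a finite $\beta_0\colon\Gamma\to\Aut(Y)$ agreeing with $\rho_0$ on prescribed data, and then $\psi_Y\circ\beta_0$ together with an element $g\in\Aut(M)$ supplied by ultrahomogeneity of $M$ produces a conjugate inside $U\cap V_x$. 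Thus $R_{\text{fo}}=\bigcap_{x\in M} V_x$ is comeager. The second ingredient is amalgamation in $\KK_\Gamma$: independent amalgamation in $\KK$ lifts to amalgamation in $\KK_\Gamma$ because the independent amalgam is canonical enough that compatible pairs of actions extend to a homomorphism $\Gamma\to\Aut(B)$. Combined with the \fra richness of $M$, this shows that for each extension $(A,\alpha)\leq(B,\beta)$ in $\KK_\Gamma$ the set of $\rho$ such that every $\Gamma$-equivariant embedding $(A,\alpha)\hookrightarrow(M,\rho)$ extends to an embedding of $(B,\beta)$ is dense open. Intersecting these countably many dense open sets with $R_{\text{fo}}$ produces a comeager set of $\pi$ making $(M,\pi)$ an ultrahomogeneous $\Gamma$-structure of age $\KK_\Gamma$; by the uniqueness of \fra limits, any two such $\pi$ are conjugate in $\Aut(M)$, which gives the desired generic representation with finite orbits.

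For the reverse direction, let $\pi\in\Rep(\Gamma,\Aut(M))$ be generic with finite orbits, fix $\alpha\colon\Gamma\to\Aut(X)$ on a chain $X$, a finite $F\subseteq\Gamma$ with $1\in F$, and a finite $X_0\subseteq X$. Using the \kat functor, embed $X$ into $M$ and lift $\alpha$ to $\hat\alpha\in\Rep(\Gamma,\Aut(M))$. The basic open neighborhood of $\hat\alpha$ specified by the partial isomorphisms $\hat\alpha(f)|_{X_0}$, $f\in F$, meets the dense orbit of $\pi$, so there is $g\in\Aut(M)$ with $g\pi(f)g^{-1}(x)=\alpha(f,x)$ for all $f\in F$ and $x\in X_0$. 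Setting $\beta=g\pi g^{-1}$, the set $Y=\beta(\Gamma)(X_0)$ is a finite $\beta$-invariant subset of $M$ and hence a structure in $\KK$, and the map $\alpha(f,x)\mapsto\beta(f)(x)$ is a well-defined, injective, equivariant embedding of $\alpha[F\times X_0]$ into $\beta|_Y$, witnessing finite approximability.

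The main obstacle, I expect, is in the forward direction. First, one must verify that independent amalgamation in $\KK$ really produces amalgams of $\Gamma$-actions, i.e.\ that extensions of compatible pairs of automorphisms can be chosen functorially in $\gamma$; for free amalgamation (graphs, $K_n$-free graphs) this is automatic since the extension is unique, but the abstract hypothesis needs scrutiny. Second, the density of the extension-property sets combines the \kat functor (to place a finite amalgam inside $M$) with the ultrahomogeneity of $M$ (to reposition it within a prescribed basic open); this is where the \kat functor does non-trivial work, and one must track the relevant commutations precisely.
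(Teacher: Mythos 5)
Your proposal is correct and follows essentially the same route as the paper: one direction builds the \fra class of finite $\Gamma$-actions on members of $\KK$ (using independent amalgamation for AP and finite generation for countability), uses finite approximability together with the \kat functor to make the \fra-limit representations dense, and identifies the generic class as the conjugacy class of representations satisfying the resulting extension property; the other direction uses density of the finite-orbit conjugacy class plus the \kat functor to approximate any action on a chain. The only cosmetic differences are that you establish comeagerness of the finite-orbit set separately (the paper gets it for free from the explicit construction) and that your extension-property sets are really dense $G_\delta$ rather than dense open once you quantify over all embeddings of $(A,\alpha)$ into $M$ — neither affects the argument.
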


\begin{proof}
If $\Gamma$ has a generic representation in $\Aut(M)$ with finite orbits, then, in particular, there exists a dense subset of $\Rep(\Gamma,M)$ consisting of representations with finite orbits. In other words, every action of $\Gamma$ on $M$ can be approximated by actions of $\Gamma$ on finite $A \in \KK$. Since $\KK$ has a \kat functor, this implies that every action of $\Gamma$ on a chain from $\KK$ is finitely approximable.
 
In order to prove the converse, we show that the class $\KK_a$ of all actions of $\Gamma$ on elements of $\KK$, with equivariant injections as morphisms, is a \fra class. The hereditary property is obvious. Amalgamation in $\KK_a$ follows from independent amalgamation in $\KK$: take actions $\alpha_0,\alpha_1, \alpha_2 \in \KK_a$ on $A_0,A_1,A_2 \in \KK$, respectively, such that $A_0 \subseteq A_1,A_2$, and  $\alpha_1 \rest \Gamma \times A_0 = \alpha_2 \rest \Gamma \times A_0=\alpha_0$. Fix an amalgam $B \in \KK$ of $A_1$ and $A_2$ over $A_0$ with $A_1 \cup A_2$ as the underlying set, and chosen using independent amalgamation in $\KK$. It is easy to verify that the mapping $\beta$ defined on $\Gamma \times B$ by setting $\beta(\gamma,x)=\alpha_1(\gamma,x)$ if $x \in A_1$, and $\beta(\gamma,x)=\alpha_2(\gamma,x)$ if $x  \in A_2 \setminus A_1$ is an action of $\Gamma$ on $B$. For the same reasons, $\KK_a$ has joint embedding. Because $\Gamma$ is finitely generated, $\KK_a$ is countable up to isomorphism, and so is a \fra class.

Now, $\KK$ has a \kat functor, which implies, together with the assumption that actions of $\Gamma$ on chains from $\KK$ are finitely approximable, that for every finite $A \subseteq B \subseteq M$, and every action $\alpha$ of $\Gamma$ on $A$, there exists a finite $C \subseteq M$ such that $B \subseteq C$, and an action $\beta$ on $C$ extending $\alpha$.
Thus, we can mimic the standard construction of a unique up to isomorphism \fra limit of $\KK_a$ (see \cite[Theorem 7.1.2]{Hod} for details): for any finite $A_0 \subseteq M$ and action $\alpha_0$ on $A_0$, we build an increasing sequence of finite $A_n \subseteq M$, $n>0 \in \NN$, such that $\bigcup_n A_n=M$, and an increasing sequence of actions $\alpha_n$ on $A_n$ so that the following condition is satisfied:

\medskip
for all $m \in \NN$, all actions $\beta, \beta'$ on finite subsets of $M$, and embeddings $e$, $f$ of $\beta$ in $\alpha_m$, $\beta'$, respectively, there is $n \geq m$, and an embedding $f'$ of $\beta'$ in $\alpha_n$ such that $f' \circ f=e$.
\medskip

Clearly, representations constructed as above form a dense set in $\Rep(\Gamma,M)$, and, for any such representation $\alpha$, the following condition is satisfied:

\medskip
for all actions $\beta, \beta'$ on finite subsets of $M$, and embeddings $e$, $f$ of $\beta$ in $\alpha$, $\beta'$, respectively, there is an embedding $f'$ of $\beta'$ in $\alpha$ such that $f' \circ f=e$.
\medskip

This means, that the set $C$ of all representations in $\Rep(\Gamma,M)$ that satisfy the above condition is a dense $G_\delta$, and, any two representations in $C$ are isomorphic (see \cite[Lemma 7.1.4]{Hod} for details), i.e., conjugate.

\end{proof}

\begin{cor}\label{cor_sect2_genericity}
Let $\Gamma$ be a finitely generated group.
\begin{enumerate}
\item If $\Gamma$ has the $3$-RZ property, then, for any $n \geq 3$, $\Gamma$ has a generic representation in $\Aut(\mathcal{G}_n)$, where $\mathcal{G}_n$ is the random $K_n$-free graph,
\item the group $\ZZ$ has a generic representation in $\Aut(\mathcal{T})$, where $\mathcal{T}$ is the random tournament; in particular, $\Aut(\mathcal{T})$ has a comeager conjugacy class,
\item more generally, if $\Gamma$ has the tournament $2$-RZ property, then $\Gamma$ has a generic representation in $\Aut(\mathcal{T})$,
\item (Rosendal) $\Gamma$ has the RZ property iff $\Gamma$ has a generic representation in $\Aut(\QU)$ with finite orbits.
\end{enumerate}
\end{cor}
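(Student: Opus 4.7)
The plan is to reduce each of the four items to Theorem~\ref{th:IndAmalg}: once the relevant \fra class is shown to satisfy independent amalgamation and independent joint embedding, the theorem converts finite approximability of actions of $\Gamma$ on chains into the existence of a generic representation of $\Gamma$ in the automorphism group of the \fra limit with finite orbits. A \kat functor in each case is already supplied by the discussion preceding Theorem~\ref{th:IndAmalg}, so the only substantive task is to verify independent amalgamation/JEP for each class and then to feed in the appropriate previously proved finite-approximability result.

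For (1), finite $K_n$-free graphs enjoy \emph{free} amalgamation (glue $A_1$ and $A_2$ over $A_0$ without adding any edges; no new $K_n$ can arise since such a clique would have to lie entirely in $A_1$ or in $A_2$), which is a special case of independent amalgamation, and JEP is realised by the disjoint union. Finite approximability of actions of $\Gamma$ on a $K_n$-free graph comes from Theorem~\ref{th:3RZKnFree}, so Theorem~\ref{th:IndAmalg} applies. For (2) and (3), the class of finite tournaments does not admit free amalgamation, and I would handle the edges between $A_1\setminus A_0$ and $A_2\setminus A_0$ by orienting all of them in the same direction, say from $A_1\setminus A_0$ to $A_2\setminus A_0$. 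Any pair $\phi_i\in\Aut(A_i)$ that agrees on $A_0$ preserves the partition $A_1\setminus A_0 \mid A_0 \mid A_2\setminus A_0$, so $\phi_1\cup\phi_2$ respects this orientation and extends to an automorphism of the amalgam; independent JEP is the same construction with $A_0=\emptyset$. Proposition~\ref{pr:Ztour2RZ} and Theorem~\ref{thm_strong2RZ} now supply the finite-approximability inputs needed for (2) and (3), respectively.

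For (4), I would establish independent amalgamation of finite rational metric spaces by the shortest-path formula $d(x,y)=\min_{z\in A_0}(d_{A_1}(x,z)+d_{A_2}(z,y))$ for $x\in A_1\setminus A_0$, $y\in A_2\setminus A_0$; the right-hand side depends only on quantities fixed by $\phi_1\cup\phi_2$, which therefore extends to an isometry of the amalgam, and a constant large cross-distance witnesses independent JEP. The forward direction of the equivalence then follows from Theorem~\ref{th:IndAmalg} together with Rosendal's theorem cited in the Introduction (RZ is equivalent to finite approximability of actions on metric spaces, and restricting to rational distances changes nothing in the argument). For the converse, a generic finite-orbit representation produces a dense set of finite-orbit representations in $\Rep(\Gamma,\QU)$, which via the \kat functor for finite rational metric spaces means every action of $\Gamma$ on a rational chain is finitely approximable, feeding back into the other direction of Rosendal's theorem to yield RZ. The main obstacle is the verification of independent amalgamation for the two non-free classes; once the orientation trick for tournaments and the shortest-path formula for metric spaces are in place, the corollary reduces to a direct bookkeeping application of Theorem~\ref{th:IndAmalg}.
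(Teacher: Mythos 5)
Your proposal is correct and follows essentially the same route as the paper: verify independent amalgamation and JEP for each class (free amalgamation for $K_n$-free graphs, the ``orient all cross-edges from $A_1$ to $A_2$'' amalgam for tournaments, and metric-free amalgamation for rational metric spaces), then feed the finite-approximability results of Theorems~\ref{th:3RZKnFree} and~\ref{thm_strong2RZ}, Proposition~\ref{pr:Ztour2RZ}, and Rosendal's theorem into Theorem~\ref{th:IndAmalg}. The only difference is that you spell out slightly more of the routine verifications (e.g.\ why the shortest-path distance is fixed by $\phi_1\cup\phi_2$), which the paper leaves implicit.
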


\begin{proof}
As it has been already mentioned, the classes of finite $K_n$-free graphs, tournaments, and finite metric spaces with rational distances have \kat functors. The classes of $K_n$-free graphs have free amalgamation and joint embedding. For finite tournaments, the amalgamation is not free but this class has independent amalgamation (as well as joint embedding) because, for any given tournaments $A_0,A_1,A_2$ with $A_0 \subseteq A_0,A_1$, we can amalgamate $A_1$, $A_2$ over $A_0$ by adding an edge $(a_1,a_2)$ iff $a_1 \in A_1$, $a_2 \in A_2$, for $a_1,a_2 \not \in A_0$. Similarly, the class of finite metric spaces with rational distances has metric-free amalgamation, which is also a case of independent amalgamation.

Then we use Theorem \ref{th:3RZKnFree} for $K_n$-free graphs, Proposition \ref{pr:Ztour2RZ} and Theorem \ref{thm_strong2RZ} for tournaments, and \cite[Theorem 7]{Ro} for metric spaces.

\end{proof}
\begin{rem}
We note that Corollary \ref{cor_sect2_genericity} in particular implies that $K_n$-free graphs, for $n\geq 3$, have the Hrushovski property and the automorphism group of the random $K_n$-free graph has ample generics. This is a result originally proved by Herwig in \cite{He}.
\end{rem}

%\begin{cor}
%Let $\Gamma$ be a finitely generated group, let $\KK$ be a \fra class with independent amalgamation and joint embedding, and with a \kat functor. Let $M$ be the \fra limit of $\KK$. Then every action of $\Gamma$ on a chain from $\KK$ is finitely approximable if and only if $\Gamma$ has a generic representation in $\Aut(M)$ with finite orbits .
%\end{cor}

\subsection{The theorem of Glasner, Kitroser and Melleray}
Finally, we give another proof of a characterization of groups with generic permutation representations that was proved by Glasner, Kitroser and Melleray in \cite{GKM}, which is in the spirit of our other proofs in this paper. Let us recall some terminology. For a countable group $\Gamma$, view the set $\Sub(\Gamma)$ of its subgroups as a closed subspace of the Cantor space $2^\Gamma$. With this identification we give $\Sub(\Gamma)$ a compact Polish topology usually called the Chabauty topology (see \cite{Cha} for the original reference). Glasner, Kitroser and Melleray call a countable group $\Gamma$ \emph{solitary} if the set of isolated points in $\Sub(\Gamma)$ is dense.

\begin{thm}[Glasner, Kitroser, Melleray]
\label{th:GlKiMe}
A countable group $\Gamma$ has a generic permutation representation if and only if it is solitary.
\end{thm}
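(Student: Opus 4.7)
My plan is to describe the generic representation explicitly and to verify that its conjugacy class is comeager via the natural continuous evaluation maps into $\Sub(\Gamma)$.

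For each $n\in\NN$ define $\Stab_n\colon\Rep(\Gamma,S_\infty)\to\Sub(\Gamma)$ by $\Stab_n(\sigma)=\{\gamma\in\Gamma:\sigma(\gamma)(n)=n\}$. A direct verification shows each $\Stab_n$ is a continuous open surjection: continuity and surjectivity are immediate, while openness amounts to checking that the constraints a basic open $U\subseteq\Rep(\Gamma,S_\infty)$ imposes on $\Stab_n(\sigma)$, once closed under group multiplication, carve out a basic open set in $\Sub(\Gamma)$. A standard argument for continuous open surjections of Polish spaces then gives that pulling back along $\Stab_n$ preserves and reflects Baire category of Baire-measurable sets.

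Forward direction: Suppose $\pi$ is a generic representation. Since $\Stab_0(g\pi g^{-1})=\Stab_\pi(g^{-1}(0))$ for $g\in S_\infty$, the image $\Stab_0([\pi])$ equals the countable set $S=\{\Stab_\pi(y):y\in\NN\}$. Hence $[\pi]\subseteq \Stab_0^{-1}(S)$ is comeager, and so $S$ is comeager in $\Sub(\Gamma)$ by the category-reflecting property of $\Stab_0$. If $\Gamma$ were not solitary, some non-empty open $V\subseteq\Sub(\Gamma)$ would contain no isolated point and so would be a perfect Polish space. Then $V\cap S$ would be countable and hence meager in $V$, while at the same time $V\setminus S$, being meager in $\Sub(\Gamma)$, is meager in $V$, forcing $V\cap S$ to be comeager in $V$; this contradiction proves solitarity.

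Backward direction: Assume $\Gamma$ is solitary, and let $I\subseteq\Sub(\Gamma)$ be the countable, dense open set of isolated subgroups. I claim that each of the following conditions holds on a comeager set, and that together they pin down a single isomorphism type of $\Gamma$-action: (i) $\Stab_\sigma(n)\in I$ for every $n\in\NN$; (ii) for every isolated $\Gamma$-conjugacy class $[H]$ and every $k\in\NN$, $\sigma$ has at least $k$ distinct orbits whose stabilizers lie in $[H]$. Condition (i) is a countable intersection of the sets $\Stab_n^{-1}(I)$, each open (as $I$ is open) and dense (by openness and surjectivity of $\Stab_n$ combined with density of $I$). For (ii), I would express the set of $\sigma$ realizing the condition as a union over tuples of basepoints and exhibit a dense $G_\delta$ inside it; density follows because any finite partial action can be extended on fresh elements of $\NN$ to add $k$ new orbits of type $\Gamma/H$, with the isolation of $H$ letting one pin down each new basepoint's stabilizer to be exactly $H$ by finitely many constraints. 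Together (i) and (ii) force the $\Gamma$-set structure of $\sigma$ to be $\bigsqcup_{[H]\in I/\Gamma}\aleph_0\cdot\Gamma/H$, uniquely up to isomorphism, so the comeager set of such $\sigma$ forms a single $S_\infty$-conjugacy class.

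The main technical obstacle lies in (ii): the statement ``$\sigma$ has at least $k$ orbits of type $\Gamma/H$'' combines an open part (prescribing each basepoint's stabilizer using the isolation of $H$) with an a priori closed part (asserting that chosen basepoints lie in distinct orbits, which quantifies over all of $\Gamma$). The resolution is to fix $H$ itself (not merely $[H]$) as the stabilizer at each basepoint, so that distinctness of orbits can be controlled modulo the normalizer $N_\Gamma(H)$, and to realize the coset structure on fresh coordinates of $\NN$ disjoint from the finite partial action already present.
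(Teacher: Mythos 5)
Your ``generic $\Rightarrow$ solitary'' direction is correct and takes a genuinely different route from the paper. The paper argues by contradiction through representation spaces: for a basic open $O\subseteq\Sub(\Gamma)$ without isolated points it builds, for each $H\in O$, a dense $G_\delta$ set $C(H)$ of representations having no point with stabilizer exactly $H$ (density uses non-isolation of $H$ to perturb), and then intersects a putative comeager class with the open set of representations realizing some stabilizer in $O$. You instead push the comeager class forward through the continuous open surjection $\Stab_0$ and observe that its image is countable, so a perfect open subset of $\Sub(\Gamma)$ would have to be covered, up to a meager set, by a countable set --- a contradiction with Baire category. Your argument localizes all the work into one clean fact about open maps and is arguably more transparent; the one step you should write out in full is the openness of $\Stab_n$ (the realization half: every subgroup satisfying the finitely many forced membership and non-membership constraints extracted from a basic open $U$ actually occurs as $\Stab_n(\sigma)$ for some $\sigma\in U$), but this is a routine extension argument.

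The ``solitary $\Rightarrow$ generic'' direction has a genuine gap, and it sits exactly where you located it. Your proposed resolution --- pin the stabilizer of each basepoint to $H$ itself so that two such basepoints can only be merged by an element of $N_\Gamma(H)$, and then control distinctness of orbits ``modulo the normalizer'' --- only closes the gap when $[N_\Gamma(H):H]<\infty$, since only then does ``$n_i$ and $n_j$ lie in distinct orbits'' become an open condition relative to the open set where both stabilizers equal $H$. An isolated subgroup can have infinite normalizer quotient, and then the condition (ii) you want is not merely hard to express as a $G_\delta$: it is false that it holds comeagerly. Concretely, take $\Gamma=\ZZ(p^\infty)$, the Pr\"ufer group. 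Here $\Sub(\Gamma)$ is a convergent sequence $\{1\}, C_p, C_{p^2},\ldots\to\Gamma$, so $\Gamma$ is solitary and $\{1\}$ is isolated, with $N_\Gamma(\{1\})/\{1\}=\Gamma$ infinite. The set of $\sigma\in\Rep(\Gamma,S_\infty)$ having two disjoint free orbits is meager: in the Banach--Mazur game, whenever two partial orbits have been forced to be free, one additional consistent constraint $\sigma(\gamma)(n_1)=n_2$ with $\gamma$ outside the finitely many cosets already used merges them, and the opponent can repeat this forever, so the limiting condition is always compatible with a single free orbit. By the topological zero-one law the generic $\ZZ(p^\infty)$-action has exactly one free orbit (and, by the same argument, exactly one orbit of each type $\Gamma/C_{p^k}$), not $\aleph_0$ many. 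Hence the isomorphism type $\bigsqcup_{[H]}\aleph_0\cdot\Gamma/H$ that your conditions (i) and (ii) are designed to isolate is not the generic one for this group, and (ii) defines a meager set. The repair is not cosmetic: one must either prove $[N_\Gamma(H):H]<\infty$ for the isolated subgroups at hand (true in many examples, false in general) or change the target multiplicities to reflect the normalizer quotient, which changes which conditions need to be shown comeager. I should add, in fairness, that the paper's own write-up of this direction is compressed at precisely the same point --- its displayed defining condition for $C$ contains the non-open clause $n\notin\alpha[\Gamma][\{n_1,\ldots,n_m\}]$ and asserts the same $\aleph_0$ multiplicities --- so you should not take its description of the comeager class at face value when patching your argument.
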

\begin{rem}
Notice the analogy of this theorem with the result that follows from \cite{KLP} mentioned in the beginning of Section \ref{section4} which says that $\Gamma$ has a generic unitary representation if and only if the set of isolated points in the unitary dual $\hat{\Gamma}$ equipped with the Fell topology is dense.
\end{rem}
\begin{proof}
Let $\Gamma$ be a countable solitary group. Let $\mathcal{G}=\{\Gamma_i: i\in\Nat\}$ be the (at most) countable collection of isolated subgroups of $\Gamma$ in $\Sub(\Gamma)$ which form a dense subset there. Let us identify $\NN$ with the disjoint union of countably many copies of cosets of subgroups from $\mathcal{G}$, and let $\Age$ be the set of all finite `sums' of left quasi-regular actions
\[ \Gamma\curvearrowright \Gamma/H_1 \sqcup \ldots \sqcup \Gamma/H_n, \]
where $H_i\in \mathcal{G}$, for $i\leq n$. It is immediate that it is a countable class with the amalgamation property. Thus, we can construct its unique up to isomorphism \fra limit $\alpha \in \Rep(\Gamma,S_\infty)$ as in the proof of Theorem \ref{th:IndAmalg}. In this way, we obtain $C \subseteq \Rep(\Gamma,S_\infty)$ consisting of isomorphic representations $\alpha$ that satisfy the following condition: 
%show that the extension property can be expressed as a $G_\delta$ condition. Since $\mathcal{G}$ is dense, the extension property will thus define a dense $G_\delta$ set, therefore proving that $\alpha$ has a comeager conjugacy class. Consider the set

\medskip
for all $H \in \mathcal{G}$, $m \in \NN$, and $n_1, \ldots, n_m \in \NN$,  there is  $n\in \Nat\setminus \alpha[\Gamma][\{n_1,\ldots, n_m\}]$ such that $\Stab^\alpha_n=H$.
\medskip

Clearly, $C$ dense. In order to see that it is $G_\delta$, we need to verify that `$\Stab^\beta_x=H$' is an open condition, which, however, follows since $H$ is isolated, thus uniquely determined by finitely many group elements.\\

Suppose that $\Gamma$ is not solitary. Then there exists a non-empty basic open set $O$ without isolated points in $\Sub(\Gamma)$, consisting of subgroups of $\Gamma$ containing some $g_1,\ldots,g_n\in \Gamma$, and not containing some $h_1,\ldots,h_m\in \Gamma$. For every $H\in O$, let 
\[ C(H)=\{\alpha\in\Rep(\Gamma,S_\infty):\forall n\in\Nat \, \exists g\in \Gamma (\alpha(g).n=n\Leftrightarrow g\notin H)\}. \]
It is easy to check it is a $G_\delta$ set. Moreover it is dense. Indeed, fix a basic open neighborhood $U$ of some $\beta\in\Rep(\Gamma,S_\infty)$, and $n\in\Nat$. It suffices to show that there is $\gamma\in U$ such that for some $g\in \Gamma$, 
\[ \gamma(g).n=n\Leftrightarrow g\notin H. \]
If $\beta$ satisfies this condition, we are done. Otherwise, fix $n_1,\ldots,n_k\in\Nat$ and $g_1,\ldots,g_l\in \Gamma$ that determine $U$; we can suppose $n_1=n$. Since $H$ is not isolated, there exist $H'\leq \Gamma$ and $h\in \Gamma\setminus F$ such that for $l'\leq l$, $g_k\in H$ iff $g_k\in H'$ and $h\in H$ iff $h\notin H'$. Therefore we can find some $\gamma\in U$ such that $\gamma(\Gamma).n\cong \Gamma/H'$, which means that $\gamma$ satisfies the condition above.

Now suppose there is a comeager conjugacy class $C$. It must intersect the open set
\[ D=\{\alpha\in\Rep(\Gamma,S_\infty):\exists n\in\Nat\;\exists H\in O\; \alpha(\Gamma).n\cong \Gamma/H\}.\]
Then for some $H\in O$, and for every $\beta\in C$, there is $n\in\Nat$ such that $\beta(\Gamma).n\cong \Gamma/H$. This contradicts that $C$ also intersects $C(H)$.
\end{proof}

\section{Generic representations in metric structures}\label{section4}
In this section, we investigate generic properties of representations of countable groups in automorphism groups of metric structures. Typically, there are no generic representations in this situation. However, perhaps the most interesting case, when the metric structure in question is the separable infinite-dimensional Hilbert space, is still open. It follows from Theorem 2.5 in \cite{KLP} that when $\Gamma$ is a group with the Kazhdan's property (T) whose finite-dimensional unitary representations form a dense set in the unitary dual $\hat{\Gamma}$, $\Gamma$ has a generic unitary representation. See also \cite{DMV} for a more explicit statement of this theorem and more elementary proof. Nevertheless, the existence of such infinite Kazhdan groups is an open question, see e.g. Question 7.10 in \cite{BdlHV}.

Here, we prove that an at most countable group $\Gamma$ has a generic representation in the isometry group of the Urysohn space and the Urysohn sphere if and only if $\Gamma$ is finite. As mentioned in the introduction, Julien Melleray informed us he had proved it earlier in his habilitation thesis, see Theorem 5.78 in \cite{Me}. As he did not publish the proof for the Urysohn sphere we use the opportunity to present our proof here (for the sake of completeness also with the proof for the Urysohn space which is a simpler version). We also show that every infinite countable group has meager conjugacy classes in the linear isometry group of the Gurarij space, which answers a question of Melleray from the same paper \cite{Me}. We also show that these methods can be used to prove that when one restricts to the space of free actions on the Random graph, the rational Urysohn space, etc., then every infinite group has meager classes.

Most importantly, we show that the conjugation action of the isometry group of the Urysohn space on the space of representations of a fixed infinite group $\Gamma$ in the Urysohn space is generically turbulent. We recall from the introduction that Kechris and Rosendal asked in \cite{KeRo} if that is true for $\Gamma=\Int$.

\subsection{Urysohn space and Urysohn sphere} Denote by $\Ur$ and by $\Ur_1$ the Urysohn universal metric space and the Urysohn sphere respectively. We refer the reader to Chapter 5 in \cite{Pe} for information about the Urysohn space. We recall that the \fra limit of finite rational-valued metric spaces is the so called rational Urysohn space, denoted here by $\Rat\Ur$, and $\Ur$ is its completion. Analogously, the \fra limit of finite rational-valued metric spaces bounded by one is the rational Urysohn sphere, denoted by $\Rat\Ur_1$, and its completion is $\Ur_1$. Alternatively, one may obtain $\Ur_1$, as the name suggests, by picking any point from $\Ur$ and taking the subset of $\Ur$ of those points having distance one half from the chosen point.

For a fixed countable group $\Gamma$, we shall denote the Polish space of representations of $\Gamma$ in the Polish groups $\Iso(\Ur)$, resp. $\Iso(\Ur_1)$ by $\Rep(\Gamma,\Ur)$, resp. $\Rep(\Gamma,\Ur_1)$.

\begin{thm}
Let $\Gamma$ be a finite group. Then the classes of all actions of $\Gamma$ on finite rational metric spaces, resp. on finite rational metric spaces bounded by $1$ are Fra\" iss\' e classes whose limits are an action of $\Gamma$ on the $\Rat\Ur$, resp. on $\Rat\UU_1$. Their completions are generic actions of $\Gamma$ on $\Ur$, resp. on $\UU_1$.
\end{thm}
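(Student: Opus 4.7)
The plan is to follow the scheme of Theorem~\ref{th:IndAmalg}, transposed to the metric \fra setting. Let $\KK$ denote the class of finite rational metric spaces (with distances bounded by $1$, in the spherical case), and let $\KK_a$ denote the class of actions of $\Gamma$ by isometries on structures from $\KK$, with equivariant isometric injections as morphisms. Since $\Gamma$ is finite and distances are rational, $\KK_a$ is countable up to isomorphism, so the only thing to check to certify it as a \fra class is HP, AP and JEP.

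First I would verify these: the hereditary property is immediate because $\Gamma$-closures of finite subsets are finite. For amalgamation, given actions $\alpha_0\subseteq\alpha_1,\alpha_2$ on $A_0\subseteq A_1,A_2$, I would take the metric-free amalgam $B=A_1\cup A_2$, defining $d(x,y)=\min_{z\in A_0}\{d(x,z)+d(z,y)\}$ for $x\in A_1\setminus A_0$ and $y\in A_2\setminus A_0$ (capped at $1$ in the bounded case), and define the combined $\Gamma$-action piecewise. Since $A_0$ is $\Gamma$-invariant and both $\alpha_1,\alpha_2$ preserve their respective metrics, the combined action preserves $d$ on all of $B$ by a direct case check; joint embedding is a special instance. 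The \fra limit $M$ of $\KK_a$ is then a countable $\Gamma$-action on an ultrahomogeneous rational metric space. Because every finite rational metric space carries the trivial $\Gamma$-action and that action lies in $\KK_a$, the age of the underlying metric space of $M$ coincides with all of $\KK$; by uniqueness of the classical \fra limit of $\KK$, the underlying space is $\Rat\Ur$ (respectively $\Rat\UU_1$). Taking the metric completion yields an isometric action $\alpha^*$ of $\Gamma$ on $\Ur$ (respectively $\UU_1$), our candidate generic representation.

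To establish genericity in $\Rep(\Gamma,\Ur)$, I would isolate a comeager subset $C\subseteq\Rep(\Gamma,\Ur)$ consisting of representations all conjugate to $\alpha^*$. The defining condition for $C$ is the approximate extension property: for every $\epsilon>0$, every finite $X_0\subseteq\Ur$, and every finite rational $\Gamma$-metric extension $Y$ of the finite $\Gamma$-invariant set $\Gamma\cdot X_0$ (with the action induced by $\alpha$), there exists a $\Gamma$-equivariant isometric embedding $Y\to\Ur$ that $\epsilon$-approximates the identity on $X_0$. Quantifying $X_0$ over a countable dense set of $\Ur$ and $Y$ over an enumeration of $\KK_a$ exhibits $C$ as a countable intersection of open sets; density of $C$ is then supplied by the \kat functor for $\KK$, which realizes any such $Y$ inside $\Ur$ up to arbitrarily small perturbation. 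A standard metric back-and-forth argument shows that any two representations in $C$ are $\Iso(\Ur)$-conjugate, and $\alpha^*\in C$ by construction. The case of $\UU_1$ is entirely analogous, working with the subclass of $\KK$ whose distances are at most $1$.

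The main obstacle is the back-and-forth in the last step: unlike the discrete setting of Theorem~\ref{th:IndAmalg}, metric completion replaces equality of partial isometries by $\epsilon$-approximation, so the conjugating isometry of $\Ur$ must be assembled as a uniform limit of partial isometries whose errors are summable, and one must then check that this limit truly conjugates the two representations on all of $\Ur$, not merely on a dense rational subspace. This is essentially the content of the metric \fra theorem applied to $\KK_a$, and it is where the assumption that $\Gamma$ be finite is used decisively, since it guarantees that all relevant $\Gamma$-closures stay finite throughout the construction.
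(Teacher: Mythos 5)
Your proposal follows essentially the same route as the paper, which omits the detailed proof but indicates exactly this strategy: build the \fra class of finite rational $\Gamma$-metric spaces via metric-free (independent) amalgamation, take the limit and its completion, and characterize the conjugacy class by the approximate finite extension property, which is $G_\delta$ and dense. The only inference that needs repair is the identification of the underlying space of the limit with $\Rat\Ur$: knowing that the age of the underlying metric space is all of $\KK$ does not by itself invoke uniqueness of the \fra limit, since one must also verify the one-point extension property of the underlying space; this follows by extending any one-point rational metric extension of a finite $\Gamma$-invariant set to a $\Gamma$-equivariant extension (adjoin an orbit and take the shortest-path, i.e.\ maximal, invariant metric) and then applying the extension property of the limit of $\KK_a$. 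With that fix the argument is complete and matches the paper's intended proof.
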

We omit the proof which is straightforward, and probably known among the experts. Let us only note that the extension of the generic action of $\Gamma$ on $\Rat\UU$ (resp. on $\Rat\UU_1$) to the completion $\UU$ (resp. $\UU_1$) can still be uniquely described by the finite extension property, up to $\varepsilon$, which provides a $G_\delta$ condition for the equivalence class of this action inside $\Rep(\Gamma,\UU)$ (resp. $\Rep(\Gamma,\UU_1)$). The density is clear, so we get that the equivalence class of the completion of the \fra limit is comeager.
\medskip

When one considers the class of all actions of an infinite countable group $\Gamma$ on finite rational metric spaces, it is still a \fra class - although it can be empty if $\Gamma$ does not have finite quotients. The equivalence class of the completion of the \fra limit can even happen, in some cases, to be dense in $\Rep(\Gamma,\UU)$, however the computation that it is $G_\delta$ fails.\\

Recall that a \emph{pseudonorm} (or \emph{length function}) on a group $\Gamma$ is a function $\lambda:\Gamma\rightarrow \Rea$ satisfying $\lambda(1_\Gamma)=0$, $\lambda(g)=\lambda(g^{-1})$, for $g\in \Gamma$, and $\lambda(g\cdot h)\leq \lambda(g)+\lambda(h)$, for $g,h\in\Gamma$. We shall generalize this notion below.
\begin{defin}
Let $\Gamma$ be a group and $I$ some index set. A \emph{generalized pseudonorm} on the pair $(\Gamma,I)$ is a function $N:\Gamma\times I^2\rightarrow [0,\infty)$ satisfying
\begin{itemize}
\item $N(g,i,j)=N(g^{-1},j,i)$, for all $g\in \Gamma$ and $i,j\in I$;
\item $N(1_\Gamma,i,i)=0$, for every $i\in I$, and $N(g,i,j)>0$ for all $g\in \Gamma$ (including $1_\Gamma$) whenever $i\neq j$;
\item $N(g h,i,j)\leq N(g,i,k)+N(h,k,j)$, for all $g,h\in \Gamma$ and $i,j,k\in I$.

\end{itemize}
For any $i\in I$, we shall denote by $N_i$ the function defined by $N_i(g)=N(g,i,i)$. Clearly, it is a pseudonorm on $\Gamma$.
\end{defin}

Generalized pseudonorms correspond to actions of $\Gamma$ on metric spaces by isometries together with representatives of each orbit. Indeed, let $I$ be some index set, and let $N$ be a generalized pseudonorm on $(\Gamma,I)$. For each $i\in I$, let $H_i\leq \Gamma$ be the subgroup defined as the kernel of $N_i$. We define a metric $d$ on $M=\bigcup_{i\in I} \Gamma/H_i$ as follows: for $g,h\in \Gamma$ and $i,j\in I$ we set $d(g H_i, h H_j)=N(g^{-1} h,i,j)$. It is straightforward to check that it is a metric, and, moreover, that the natural action of $\Gamma$ on $M$ (defined as $g.(h H_i)=(g h) H_i$) is an action by isometries.

Conversely, let $(M,d)$ be a metric space on which $\Gamma$ acts by isometries. Let $I$ be an index set for all the orbits in $M$ of this action, and for each $i\in I$ select a representative $x_i\in M$ from this orbit. Now the function $N:\Gamma\times I^2\rightarrow \Rea$ defined by $N(g,i,j)=d(x_i,g x_j)$ is readily checked to be a generalized pseudonorm.\\

A function $P:\Gamma\times I^2\rightarrow [0,\infty)$ satisfying all the axioms of the generalized pseudonorm except the triangle inequality is called \emph{generalized pre-pseudonorm}.
\begin{fact}\label{fact_extension}
If $P:\Gamma\times I^2\rightarrow \Rea$ is a generalized pre-pseudonorm, then there exists a maximal generalized pseudonorm $N$ satisfying $N(g,i,j)\leq P(g,i,j)$ for all $g\in \Gamma$ and $i,j\in I$.
\end{fact}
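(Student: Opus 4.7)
The plan is to adapt the classical Katetov/Graev closure construction to this group-theoretic setting. The natural candidate is
\[ N(g,i,j) \;=\; \inf \sum_{k=1}^{n} P(g_k,\, i_{k-1},\, i_k), \]
where the infimum ranges over all $n \geq 1$, all factorizations $g = g_1 g_2 \cdots g_n$ in $\Gamma$, and all sequences $i = i_0, i_1, \ldots, i_n = j$ in $I$. Taking the trivial one-step path $(g;\,i,j)$ immediately gives $N(g,i,j) \leq P(g,i,j)$, and the path of length one with $g_1 = 1_\Gamma$ and $i_0 = i_1 = i$ gives $N(1_\Gamma,i,i) = 0$.

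The bulk of the axiom check is routine. Symmetry $N(g,i,j) = N(g^{-1}, j, i)$ follows by reversing any admissible path and invoking the symmetry axiom of $P$; the triangle inequality $N(gh,i,j) \leq N(g,i,k) + N(h,k,j)$ is obtained by concatenating $\varepsilon$-optimal paths for $(g,i,k)$ and $(h,k,j)$ and letting $\varepsilon \to 0$; nonnegativity is inherited from $P$. Maximality is immediate: for any generalized pseudonorm $M \leq P$, iterating its own triangle inequality along any admissible path yields
\[ M(g,i,j) \;\leq\; \sum_{k=1}^n M(g_k, i_{k-1}, i_k) \;\leq\; \sum_{k=1}^n P(g_k, i_{k-1}, i_k), \]
so passing to the infimum gives $M \leq N$. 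Equivalently, $N$ is the pointwise supremum of all generalized pseudonorms dominated by $P$.

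The single delicate point, and what I expect to be the main obstacle, is verifying the positivity condition $N(g,i,j) > 0$ whenever $i \neq j$. Every admissible path from $i$ to $j$ in the index coordinate must cross through at least one transition $i_{k-1} \neq i_k$, contributing a strictly positive summand $P(g_k, i_{k-1}, i_k)$; however, that summand is a priori not uniformly bounded away from zero as $g_k$ varies over $\Gamma$. To close the argument I would reduce to canonical paths in which the index sequence is monotone in a controlled way (grouping all consecutive steps that stay within a single $i_\ell$ into a single group element using the sub-additivity encoded by the candidate $N_{i_\ell}$), so that only finitely many genuine index-transitions $(i_{k-1}, i_k)$ with $i_{k-1} \neq i_k$ survive, and then rely on the positivity of $P$ on those finitely many specific transitions (which is guaranteed by the definition of a generalized pre-pseudonorm). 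This is exactly the same delicate point that arises in the classical Katetov construction of a metric from a pre-metric, and the argument here should follow the same template.
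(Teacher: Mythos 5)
Your construction is exactly the paper's: the paper defines $N$ as the graph metric on the vertex set $\Gamma\times I$ with edge valuation $P'((g,i),(h,j))=P(h^{-1}g,i,j)$, and explicitly records the equivalent formula $N(g,i,j)=\inf\{\sum_k P(g_k,i_{k-1},i_k)\}$ over factorizations $g=g_1\cdots g_n$ with matching index chains, which is your candidate verbatim. The routine verifications (symmetry by reversing paths, the triangle inequality by concatenation, maximality by iterating the triangle inequality of any competitor along a path) are the same in both; the paper in fact dismisses all of this with ``it is clear.''

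The one place you go beyond the paper is the positivity of $N(g,i,j)$ for $i\neq j$, and you are right that this is the delicate point --- but your proposed patch does not close it, and in fact it cannot be closed from the stated hypotheses. After grouping consecutive same-index steps, a path still contains at least one transition $P(g_k,i_{k-1},i_k)$ with $i_{k-1}\neq i_k$, but, as you yourself observe, $g_k$ ranges over all of $\Gamma$, and pointwise positivity of $P$ on a fixed pair of indices does not give $\inf_{h\in\Gamma}P(h,i_{k-1},i_k)>0$. Concretely, take $\Gamma=\Int$, $I=\{1,2\}$, $P(n,i,i)=0$ for all $n$, and $P(n,1,2)=P(n,2,1)=1/(1+|n|)$: this is a generalized pre-pseudonorm, yet any generalized pseudonorm $N'\leq P$ satisfies $N'(0,1,2)\leq N'(m,1,2)+N'(-m,2,2)\leq 1/(1+|m|)\to 0$, so no such $N'$ exists and the Fact fails as literally stated. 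The statement implicitly needs $\inf_{h\in\Gamma}P(h,i,j)>0$ for each $i\neq j$; this holds in every application in the paper, where $P$ is taken to equal a fixed constant $M>0$ (or $1$) outside a finite set, and under that hypothesis your ``finitely many genuine transitions'' argument does go through. So: same proof as the paper's, a correctly identified gap that the paper glosses over, but an incomplete repair --- state the uniform lower bound as an extra hypothesis and your argument becomes complete.
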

Maximality of $N$ means that for any generalized pseudonorm $N'$ satisfying $N'(g,i,j)\leq P(g,i,j)$, for all $g\in\Gamma$ and $i,j\in I$, we have $N'(g,i,j)\leq N(g,i,j)$ for all such triples.
\begin{proof}
Consider a complete graph with the set of vertices $\Gamma\times I$, i.e. $|I|$ disjoint copies of $\Gamma$ and consider a real valuation of its edges $P'$, where the value $P'((g,i),(h,j))$ is defined to be $P(h^{-1} g,i,j)$, for $g,h\in \Gamma$ and $i,j\in I$. Now consider the corresponding graph metric $N'$ on $\Gamma\times I$. It is clear that the function $N(g,i,j)=N'((1_\Gamma,i),(g,j))$ is the desired generalized pseudonorm. Note that another equivalent way how to define $N$ is to set
\[ N(g,i,j)=\inf\{\sum_{i=1}^m P(g_i,k_i,l_i):g=g_1\ldots g_m,l_i=k_{i+1}\forall i<m\}, \]
from which the maximality of $N$ is easily seen.
\end{proof}
An analogous notion is that of a partial generalized pseudonorm, which is a function satisfying all the axioms of the generalized pseudonorm except that it is defined partially, i.e. its domain is a proper subset of $\Gamma\times I^2$. We provide a precise definition below.
\begin{defin}
A function $P:A\subseteq\Gamma\times I^2\rightarrow \Rea$ is a \emph{partial generalized pseudonorm} if
\begin{itemize}
    \item whenever $(1_\Gamma,i,i)\in A$, for some $i\in I$, then $P(1_\Gamma,i,i)=0$;
    \item whenever $(g,i,j)\in A$, for some $g\in\Gamma$ and $i\neq j\in I$, then $P(g,i,j)>0$;
    \item whenever $(g,i,j),(g^{-1},j,i)\in A$, for some $g,i,j$, then $P(g,i,j)=P(g^{-1},j,i)$;
    \item whenever $(g_1\ldots g_n,i_0,i_n),(g_1,i_0,i_1),\ldots,(g_n,i_{n-1},i_n)\in A$, for some $g_1,\ldots,g_n\in\Gamma$ and $i_0,\ldots,i_n\in I$, then $P(g_1\ldots g_n,i_0,i_n)\leq P(g_1,i_0,i_1)+\ldots+P(g_n,i_{n-1},i_n).$
\end{itemize}
\end{defin}
\begin{rem}
We remark that in the previous definition, it is indeed necessary to require $P(g_1\ldots g_n,i_0,i_n)\leq P(g_1,i_0,i_1)+\ldots+P(g_n,i_{n-1},i_n)$, where $n$ can be arbitrarily large. While for the (full) generalized pseudonorms, the general case follows by induction from the case $n=2$, i.e. from the standard triangle inequality, this is not necessarily anymore true in the partial case. Consider e.g. the case when $I$ is a singleton (so we omit it from the notation), $\Gamma=\Int$ and $A=\{1,3\}\subseteq \Int$. Then we may set $P(1)=1$ and $P(3)=4$. We do not violate the standard triangle inequality since $2\notin A$, however we do not have $P(3)\leq P(1)+P(1)+P(1)$.
\end{rem}
 Let now $P:A\subseteq \Gamma\times I^2\rightarrow \Rea$ be some partial generalized pseudonorm. Take again a graph $V$ with the set of vertices $\Gamma\times I$ and connect two vertices $(g,i)$ and $(h,j)$ by an edge if and only if $(h^{-1}g,i,j)\in A$. If $V$ is connected, then we say that the partial generalized pseudonorm is \emph{sufficient}. Note that this is the case when for example $A=F\times I^2$, where $F$ is some symmetric generating subset of $\Gamma$. Also, in this case we can extend the partial generalized pseudonorm onto a genuine generalized pseudonorm.
\begin{fact}\label{fact_extension2}
Let $P:A\subseteq \Gamma\times I^2\rightarrow \Rea$ be a sufficient partial generalized pseudonorm. Then there exists a maximal generalized pseudonorm $N$ on $\Gamma\times I^2$ which extends $P$.
\end{fact}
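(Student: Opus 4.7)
The plan is to follow the template of Fact~\ref{fact_extension}, building a weighted graph on $\Gamma\times I$ and defining $N$ as the induced path metric, but now carrying along the fact that edges exist only where $P$ is defined. Concretely, I would connect two vertices $(g,i)$ and $(h,j)$ by an edge of weight $P(g^{-1}h,i,j)$ whenever $(g^{-1}h,i,j)\in A$; the symmetry axiom of a partial generalized pseudonorm makes this weight well-defined, and the hypothesis of sufficiency is precisely the statement that the resulting graph $V$ is connected, so its path metric $N'$ takes only finite values. Left multiplication of $\Gamma$ on the first coordinate preserves the edge weights, hence acts by isometries on $(V,N')$, and one then sets
\[ N(g,i,j)=N'((1_\Gamma,i),(g,j)). \]

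Verifying that $N$ is a generalized pseudonorm is entirely analogous to Fact~\ref{fact_extension}: the identity $N(g,i,j)=N(g^{-1},j,i)$ follows from $\Gamma$-invariance of $N'$ together with symmetry of the path metric; $N(1_\Gamma,i,i)=0$ is immediate; and the triangle inequality $N(gh,i,j)\leq N(g,i,k)+N(h,k,j)$ is obtained by concatenating a path realising $N(g,i,k)$ with the $g$-translate of one realising $N(h,k,j)$.

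The main, and only genuinely new, point -- highlighted by the Remark preceding the statement -- is to check that $N$ extends $P$. The inequality $N(g,i,j)\leq P(g,i,j)$ on $A$ is immediate from the one-edge path. For the reverse, any path $(1_\Gamma,i)=(h_0,i_0),(h_1,i_1),\ldots,(h_n,i_n)=(g,j)$ in $V$ has total weight $\sum_{k=1}^{n} P(h_{k-1}^{-1}h_k,i_{k-1},i_k)$, with all triples belonging to $A$ by construction; writing $g=(h_0^{-1}h_1)(h_1^{-1}h_2)\cdots(h_{n-1}^{-1}h_n)$, the $n$-fold triangle inequality built into the definition of a partial generalized pseudonorm yields
\[ P(g,i,j)\leq \sum_{k=1}^{n} P(h_{k-1}^{-1}h_k,i_{k-1},i_k), \]
so passing to the infimum over all paths gives $P(g,i,j)\leq N(g,i,j)$. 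This is exactly the step the Remark warns would fail under only the two-term triangle inequality. Maximality of $N$ then comes for free: any generalized pseudonorm $\widetilde N$ extending $P$ satisfies its own triangle inequality, so $\widetilde N(g,i,j)$ is bounded above by the weight of any path, and hence by $N(g,i,j)$. I expect this verification of $N|_A=P$ via the $n$-fold triangle inequality to be the only point requiring real care; everything else is a routine adaptation of Fact~\ref{fact_extension}.
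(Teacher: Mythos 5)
Your proposal is correct and follows essentially the same route as the paper: both build the weighted graph on $\Gamma\times I$ with edges exactly where $P$ is defined, take the induced path metric (equivalently, the infimum over decompositions $g=g_1\cdots g_n$ with all triples in $A$), and use sufficiency to guarantee finiteness. The only difference is one of detail: the paper asserts that $N$ extends $P$ "easily follows from the properties of $P$," whereas you spell out that this is precisely where the $n$-fold triangle inequality in the definition of a partial generalized pseudonorm is needed — a worthwhile clarification, but not a different argument.
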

\begin{proof}
The proof proceeds as the proof of Fact \ref{fact_extension}. We take a graph $V$ with the set of vertices $\Gamma\times I$ and we connect two vertices $(g,i)$ and $(h,j)$ by an edge if and only if $(h^{-1}g,i,j)\in A$, as above. For every edge $((g,i),(h,j))$ in $V$ we define a value $P'((g,i),(h,j))$ of this edge to be $P(h^{-1} g,i,j)$. Then we take the graph metric which gives the generalized pseudonorm $N$ as in the proof of Fact \ref{fact_extension}. This is again equivalent to defining $N$ as follows, for $g\in \Gamma$ and $i,j\in I$ we set

$$N(g,i,j):=\inf\{P(g_1,i_0,i_1)+\ldots+P(g_n,i_{n-1},i_n):$$ $$g=g_1\ldots g_n, i_0,\ldots,i_n\in I, i_0=i,i_n=j,(g_1,i_0,i_1),\ldots,(g_n,i_{n-1},i_n)\in A \}.$$

From this definition and properties of $P$, it easily follows that $N$ extends $P$.
\end{proof}
We are now prepared to prove the meagerness of conjugacy classes for the Urysohn space and the Urysohn sphere.
\begin{thm}[see also Theorem 5.78 in \cite{Me}]\label{thm_meagerclassesUrysohn}
Let $\Gamma$ be an infinite group. Then every $\alpha \in \Rep(\Gamma,\Ur)$ has all conjugacy classes meager. Analogously, every $\alpha \in \Rep(\Gamma,\UU_1)$ has all conjugacy classes meager.
\end{thm}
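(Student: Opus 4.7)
The plan is to use the topological $0$--$1$ law from the terminology section to reduce matters to showing that no representation $\alpha_0$ is generic, and then derive a contradiction from the flexibility of generalized pseudonorms afforded by Fact \ref{fact_extension2}. Suppose for contradiction that $[\alpha_0]$ is comeager in $\Rep(\Gamma, \UU)$ for some $\alpha_0$. I work with the conjugation-invariant displacement functions
\[
\Delta_g(\beta) = \inf_{x \in \UU} d(\beta(g) x, x), \qquad g \in \Gamma,
\]
which are upper semicontinuous on $\Rep(\Gamma, \UU)$ and invariant under conjugation by $\Iso(\UU)$, since conjugation merely reparametrizes the basepoint over which the infimum is taken.

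The central flexibility claim is: for any basic open $V \subseteq \Rep(\Gamma, \UU)$ specified by finite pseudonorm data on a finite $F \subseteq \Gamma$ and finite $X_0 \subseteq \UU$ with tolerance $\varepsilon$, for any $g \in \Gamma$ and any $c > 0$, there exists $\beta \in V$ with $\Delta_g(\beta) < c$. This is proved by adjoining to the partial generalized pseudonorm encoding $V$ a new index point $i^*$ with displacement $N(g, i^*, i^*) = c/2$, placed at pseudonorm-distance much larger than $\mathrm{diam}(X_0)$ from every existing index. Fact \ref{fact_extension2} extends this sufficient partial pseudonorm to a full generalized pseudonorm, and the associated orbit metric space embeds isometrically into $\UU$ by universality, yielding a representation $\beta \in V$ witnessing $\Delta_g(\beta) < c$ at the new basepoint. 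Consequently each $U_{g, c} := \{\beta : \Delta_g(\beta) < c\}$ is dense open, and by Baire
\[
W = \bigcap_{g \in \Gamma \setminus \{1\}} \bigcap_{c \in \QQ_{> 0}} U_{g, c}
\]
is comeager; every $\beta \in W$ has $\Delta_g(\beta) = 0$ for all non-identity $g$. If $[\alpha_0]$ were comeager then $[\alpha_0] \cap W \neq \emptyset$, and by conjugation invariance $\Delta_g(\alpha_0) = 0$ for every non-identity $g \in \Gamma$.

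From here I finish as follows. If $\alpha_0$ is the trivial representation, then $[\alpha_0]$ is a single point in the uncountable Polish space $\Rep(\Gamma, \UU)$, hence meager, a contradiction. If $\alpha_0$ is non-trivial, choose $g_0$ with $\alpha_0(g_0) \neq \mathrm{id}$, so that the conjugation-invariant displacement spectrum $\mathrm{Spec}_{g_0}(\alpha_0) := \overline{\{d(\alpha_0(g_0)x, x) : x \in \UU\}}$ contains some positive $c_0$. I then apply the flexibility claim a second time --- adjoining new orbits whose $g_0$-displacement avoids an open interval around $c_0$ while still matching the finite data of $V$ up to $\varepsilon$ --- to show that in every open neighborhood the set of $\beta$ with $c_0$ an isolated point (or not) of $\mathrm{Spec}_{g_0}(\beta)$ can be varied. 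This produces comeagerly many $\beta$ whose spectrum differs from $\mathrm{Spec}_{g_0}(\alpha_0)$ as an element of the Effros-Borel structure on closed subsets of $[0, \infty]$, yielding the desired contradiction.

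The proof for $\Rep(\Gamma, \UU_1)$ is the same, with all displacement values restricted to the unit interval $[0,1]$ throughout.

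\emph{Main obstacle.} The non-trivial case is the delicate one: adding orbits via Fact \ref{fact_extension2} only enlarges the displacement spectrum, so removing or isolating a specific value $c_0 \in \mathrm{Spec}_{g_0}(\alpha_0)$ from the generic spectrum requires either perturbing existing orbit data inside $V$ (shifting $c_0$-realizing indices to nearby but distinct displacements, consistent with the $\varepsilon$-tolerance) or moving to a stronger joint invariant such as the full joint displacement profile $(\mathrm{Spec}_g)_{g \in \Gamma}$. Making this argument robust across the two cases (non-trivial $\alpha_0$ with approximate fixed points) and reconciling the Effros-Borel-level invariants with the pointwise extension procedure of Fact \ref{fact_extension2} is where the technical difficulty lies.
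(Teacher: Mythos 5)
Your first half is sound as far as it goes: $\Delta_g$ is upper semicontinuous and conjugation-invariant, the sets $U_{g,c}$ are open, and the flexibility claim (adjoin a far-away orbit carrying a point of small $g$-displacement, extend via Fact \ref{fact_extension2}, realize the result in $\UU$ by the Kat\v etov functor and homogeneity) does make them dense. But the conclusion you extract --- $\Delta_g(\alpha_0)=0$ for every $g\neq 1$ --- is no contradiction for a non-trivial $\alpha_0$: many non-trivial representations (indeed the generic ones, and in particular the turbulent element of Theorem \ref{thm_turbulent_element}) have almost-fixed points for every group element. So everything rests on your second half, and that is where the argument breaks. The invariant $\mathrm{Spec}_{g_0}(\beta)=\overline{\{d(\beta(g_0)x,x):x\in\UU\}}$ cannot do the job: the only condition on this closed set that your extension technique can force on a dense open set is that it \emph{contain} a prescribed value (adding orbits only enlarges it, as you yourself note), so the strongest genericity statement available is $\mathrm{Spec}_{g_0}(\beta)=[0,\infty]$ for all $g_0\neq 1$ on a comeager set. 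That is not contradictory, because there exist representations $\alpha_0$ whose single-element displacement spectra are already all of $[0,\infty]$ for every $g_0$ (e.g.\ a disjoint union, with large mutual distances, of orbits realizing all rational displacements). Forcing $c_0\notin\mathrm{Spec}_{g_0}(\beta)$, or $c_0$ isolated in it, is not a dense condition, so the ``second application of the flexibility claim'' does not exist; your ``Main obstacle'' paragraph is in effect a concession of exactly this point.

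The missing idea --- which is the paper's actual proof --- is to replace the one-group-element invariant by the full displacement \emph{pseudonorm at a single point}: $\lambda^\beta_x(g)=d_\UU(\beta(g)x,x)$, a function on all of $\Gamma$. For a fixed pseudonorm $\lambda$ and fixed $x\in\Rat\UU$, the set of $\beta$ admitting some $g$ with $\left|\lambda(g)-d_\UU(\beta(g)x,x)\right|>1/4$ is open, and it is dense because $\Gamma$ is infinite: outside the finite data defining a basic neighborhood one extends the partial generalized pseudonorm by a constant $M$ on cofinitely many group elements (Fact \ref{fact_extension}), with $M>\sup\lambda+1/4$ if $\lambda$ is bounded, and one uses the unboundedness of $\lambda$ otherwise (the sphere case needs the extra care with short products of generators that the paper supplies). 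Intersecting over the countably many $x\in\Rat\UU$ gives a comeager set $C(\lambda)$ disjoint from the set of representations having a point whose displacement function is uniformly within $1/4$ of $\lambda$; applying this with $\lambda_0=\lambda^{\alpha_0}_x$ kills non-meagerness of $[\alpha_0]$ directly, with no detour through the $0$--$1$ law (whose hypothesis of a dense conjugacy class you invoke but do not verify). In short, the invariant must record the displacements of one orbit point across all of $\Gamma$ simultaneously, not the displacements of one group element across all points.
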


\begin{proof}
Fix an infinite group $\Gamma$. First, we work with the full Urysohn space. Let $\lambda$ be an arbitrary pseudonorm on $\Gamma$. It suffices to show that the set $C(\lambda)$ of all $\alpha \in \Rep(\Gamma,\Ur)$ such that for every $x \in \Rat\Ur$ there exists $g \in \Gamma$ with 
\[ \left|\lambda(g)- d_\Ur(\alpha(g)x,x)\right|>1/4 \]
is comeager. Indeed, if the conjugacy class of any $\alpha'\in\Rep(\Gamma,\Ur)$ was non-meager, it would have a non-empty intersection with $C(\lambda_0)$, where $\lambda_0(g)=d_\Ur(\alpha'(g)x,x)$ for some fixed $x \in \Rat\Ur$; this is clearly a contradiction.

To see that $C(\lambda)$, for any pseudonorm $\lambda$ on $\Gamma$, is comeager, it suffices to show that for a fixed $x \in \Rat\Ur$ the open set of all $\alpha \in \Rep(\Gamma,\Ur)$ such that there exists $g \in \Gamma$ with
\[ \left|\lambda(g)- d_\Ur(\alpha(g)x,x)\right|>1/4, \]
is dense in $\Rep(\Gamma,\Ur)$.

Let $U$ be an open neighborhood of some $\beta\in\Rep(\Gamma,\Ur)$ given by some finite set $x_1,\ldots,x_n$, where $x=x_1$ and each $x_i,x_j$ lie in different orbits of $\beta$, some finite symmetric set $F\subseteq \Gamma$ containing the unit and some $\varepsilon>0$. Set $F'=\{g^{-1} h:g,h\in F\}$, $I=\{1,\ldots,n\}$ and let $N_0:F'\times I^2\rightarrow \Rea$ be a function defined as $N_0(g,i,j)=d_\Ur(x_i,\beta(g)x_j)$, for every $g\in F'$, $i,j\in I$. Let $N'_0:F'\times I^2\rightarrow \Rea$ be a function defined as $$N'_0(g,i,j)=\begin{cases} 0 & g=1_\Gamma, i=j;\\
N(g,i,j)+\varepsilon/2 & \text{otherwise}.
\end{cases}$$
Note that $N'_0$ is a partial generalized pseudonorm. Let
\[ M'=\max\{N'_0(g,i,j):g\in F',i,j\in I\}.\] 
If the pseudonorm $\lambda$ is bounded by some $K$, then we set $M=\max\{M',K+1/4\}$; if it is unbounded, we set $M=M'$. Finally, let $N''_0:\Gamma\times I^2\rightarrow \Rea$ be a generalized pre-pseudonorm extending $N'_0$ which coincides with $N'_0$ on its domain, and everywhere else it is constantly $M$. Now we use Fact \ref{fact_extension} to find a maximal generalized pseudonorm $N$ bounded by $N''_0$.

Note that 
\begin{itemize}
\item $N(g,i,j)=0$ if and only if $g=1_\Gamma$ and $i=j$;
\item $N(g,i,j)=N'_0(g,i,j)$ if $g\in F'$;
\item for every $i\in I$ for all but finitely many $g\in \Gamma$ we have $N(g,i,i)=M$.

\end{itemize}
$N$ gives an action $\gamma'$ by isometries on a metric space $Y=\bigcup_{i\leq n} \Gamma. y_i$, where $d_Y(\gamma'(g) y_i,\gamma'(h) y_j)=N(h^{-1} g,i,j)$, for $g,h\in \Gamma$ and $i,j\in I$. Using the Kat\v etov functor for metric spaces we can extend the action $\gamma'$ on $Y$ to an action $\gamma$ on $\Ur$, and then, using the finite extension property of the Urysohn space, we can find an isometry $\phi$ of $\Ur$ such that $\phi(y_1)=x_1$, and for every $i\leq n$ and $g\in F$ we have $d_\Ur(\phi(\gamma(g)y_i),\beta(g)x_i)<\varepsilon$. Without loss of generality, we can assume that $\phi$ is the identity map, therefore $\gamma$ is in the neighborhood $U$ of $\beta$. By the construction, there exists $g\in \Gamma$ such that $d_\Ur(x,\gamma(g)x)=M$, while $\lambda(g)\leq M-1/4$ in case $\lambda$ was bounded, or $\lambda(g)>M+1/4$ in case $\lambda$ was unbounded. This finishes the proof for the Urysohn space.\\

Now we work with the Urysohn sphere. Fix some pseudonorm $\lambda$ on $\Gamma$ which is now bounded by $1$. Analogously as above, it suffices to show that for a fixed $x \in \Rat\UU_1$ the open set of all $\alpha \in \Rep(\Gamma,\Ur)$ such that there exists $g \in \Gamma$ with
\[ \left|\lambda(g)- d_{\UU_1}(\alpha(g)x,x)\right|\geq 1/4, \]
is dense in $\Rep(\Gamma,\UU_1)$.

The proof proceeds similarly in the beginning, however we cannot take an advantage of the fact that $\lambda$ may be unbounded. Let $U$ be an open neighborhood of some $\beta\in\Rep(\Gamma,\UU_1)$ again given by some finite set $x_1,\ldots,x_n$, where $x=x_1$, and each $x_i,x_j$ lies in a different orbit of $\beta$, some finite symmetric set $F\subseteq \Gamma$ containing the unit, and some $\varepsilon>0$. Set $F'=\{g^{-1} h:g,h\in F\}$, $I=\{1,\ldots,n\}$, and let $N_0:F'\times I^2\rightarrow \Rea$ be a function defined as $N_0(g,i,j)=d_{\UU_1}(x_i,\beta(g)x_j)$, for every $g\in F'$, $i,j\in I$. Let $N'_0:F'\times I^2\rightarrow \Rea$ be a function defined as $$N'_0(g,i,j)=\begin{cases} 0 & g=1_\Gamma, i=j;\\
\max\{N(g,i,j)+\varepsilon/2,1\} & \text{otherwise}.
\end{cases}$$
Set $m=\min\{N'_0(g,i,j):g\neq 1_\Gamma\text{ or }i\neq j\}$. Note that $m>0$. Take now $M=\lceil 1/m\rceil+1$.

Suppose first that there are infinitely many $g\in \Gamma$ such that $\lambda(g)\leq 3/4$. Then we set $N''_0:\Gamma\times I^2\rightarrow \Rea$ to be a generalized pre-pseudonorm extending $N'_0$ which coincides with $N'_0$ on its domain, and everywhere else it is constantly $1$. Now we use Fact \ref{fact_extension} to find a maximal generalized pseudonorm $N$ bounded by $N''_0$. It is clear that $N$ coincides with $N'_0$ on its domain and it is equal to $1$ at all but finitely many elements.

If, on the other hand, for all but finitely many $g$ we have $\lambda(g)>3/4$, then we may and will find some $g\in \Gamma$ satisfying $\lambda(g)>3/4$ and such that $g$ cannot be obtained as a product of less than $M$-many elements of $F'$. Then we set $N''_0:\Gamma\times I^2\rightarrow \Rea$ to be a generalized pre-pseudonorm extending $N'_0$  which coincides with $N'_0$ on its domain, it is equal to $1/2$ for at $(g,1,1)$, and everywhere else it is constantly $1$. We again use Fact \ref{fact_extension} to find a maximal generalized pseudonorm $N$ bounded by $N''_0$. Clearly $N(g,1,1)\leq 1/2$. We now check that $N$ coincides with $N'_0$ on its domain. Suppose that there is some $(f,i,j)$ from the domain of $N'_0$ such that $N(f,i,j)<N'_0(f,i,j)$. By the construction of $N$, it means there are $g_1,\ldots,g_n\in F'$ and $k_1,l_1,\ldots,k_n,l_n\in I$ such that $f=g_1\ldots g_n$ and $l_i=k_{i+1}$ for $i<n$ and $N(f,i,j)=\sum_{i=1}^n N''_0(g_i,k_i,l_i)<N'_0(f,i,j)$. We claim that $n<M-1$ since otherwise $N(f,i,j)=\sum_{i=1}^n N''_0(g_i,k_i,l_i)\geq \sum_{i=1}^n m\geq 1$ (note that for each $i\leq n$ we have $N''_0(g_i,k_i,l_i)\geq m$).  It is clear that for at least one $i\leq n$ we must have $(g_i,k_i,l_i)=(g,1,1)$ since otherwise all $(g_i,k_i,l_i)$ are from the domain of $N'_0$, and $N'_0$ is a restriction of a genuine generalized pseudonorm, so the triangle inequalities are satisfied there. On the other hand, there is at most one $i\leq n$ such that $(g_i,k_i,l_i)=(g,1,1)$ since $N''_0(g,1,1)=1/2$. Let $i\leq n$ be such that $g_i=g$. We have $f=g_1\ldots g_{i-1} g g_{i+1}\ldots g_n$, so $g=g^{-1}_{i-1}\ldots g^{-1}_1 f g^{-1}_n\ldots g^{-1}_{i+1}$, but that is in contradiction with the assumption that $g$ cannot be obtain as a product of less than $M$-many elements of $F'$.

Now we finish the proof as in the Urysohn space case. $N$ gives an action $\gamma'$ by isometries on a metric space $Y=\bigcup_{i\leq n} \Gamma.y_i$ bounded by $1$, where $d_Y(\gamma'(g) y_i,\gamma'(h) y_j)=N(h^{-1} g,i,j)$, for $g,h\in \Gamma$ and $i,j\in I$. Using the Kat\v etov functor for metric spaces bounded by $1$, we can extend the action $\gamma'$ on $Y$ to an action $\gamma$ on $\UU_1$, and then, using the finite extension property of the Urysohn sphere, we can find an isometry $\phi$ of $\Ur_1$ such that $\phi(y_1)=x_1$, and for every $i\leq n$ and $g\in F$ we have $d_{\UU_1}(\phi(\gamma(g)y_i),\beta(g)x_i)<\varepsilon$. Without loss of generality, we can assume that $\phi$ is the identity map, therefore $\gamma$ is in the neighborhood $U$ of $\beta$. 

Now if $\lambda$ was such that for infinitely many $g$ we have $\lambda(g)\leq 3/4$, then we have guaranteed that there are co-finitely many $g\in \Gamma$ such that $d_{\Ur_1}(x,\gamma(g)x)=1$. If, on the other hand, for all but finitely many $g$ we have $\lambda(g)>3/4$, then we have guaranteed an existence of $g\in \Gamma$ such that $\lambda(g)>3/4$, while $d_{\Ur_1}(x,\gamma(g)x)\leq 1/2$. This finishes the proof. 
\end{proof}
An object analogous to the Urysohn space in the category of Banach spaces is the Gurarij space (\cite{Gu}), denoted here by $\Gu$. We refer the reader to the paper \cite{KuSo} for information about this Banach space. We can use similar methods as in the proof of Theorem \ref{thm_meagerclassesUrysohn} to show that every representation of every infinite group has meager conjugacy classes in the linear isometry group of the Gurarij space. Since the arguments are repetitive we provide only a sketch of the proof. The existence of the Kat\v etov functor in the category of Banach spaces is shown in \cite{BY}. We mention that this theorem answers Question 5.7 from \cite{Me}.
\begin{thm}\label{thm_meagerclassesGurarij}
Let $\Gamma$ be a countably infinite group. Then every conjugacy class in $\Rep(\Gamma,\Gu)$ is meager.
\end{thm}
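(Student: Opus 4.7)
The plan is to follow closely the proof of Theorem \ref{thm_meagerclassesUrysohn}, replacing metric data and generalized pseudonorms with seminorm data on the free real vector space over $\Gamma \times I$, and replacing the Kat\v etov functor for metric spaces by the Kat\v etov functor for Banach spaces established in \cite{BY}. Because the Gurarij space is unbounded, the argument should parallel the Urysohn space case rather than the Urysohn sphere case.

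For a pseudonorm $\lambda$ on $\Gamma$, I would define
\[ C(\lambda) = \left\{\alpha \in \Rep(\Gamma, \Gu) : \forall x \in D\ \exists g \in \Gamma,\ \left|\lambda(g) - \|\alpha(g)x - x\|\right| > 1/4\right\}, \]
where $D \subseteq \Gu$ is a fixed countable dense set. The reduction is identical to the Urysohn case: if any conjugacy class $[\alpha']$ were non-meager it would have to intersect $C(\lambda_0)$ for $\lambda_0(g) = \|\alpha'(g) x_0 - x_0\|$ with some fixed $x_0 \in D$, but every $\alpha = \phi \alpha' \phi^{-1} \in [\alpha']$ has an orbit point $\phi(x_0)$ realizing $\lambda_0$ exactly, and any $1/8$-close $y \in D$ then differs from $\lambda_0(g)$ on its orbit values by at most $1/4$, contradicting membership in $C(\lambda_0)$.

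The core work is to show that $C(\lambda)$ is comeager by verifying that, for each $x \in D$, the open set $O_x$ of $\alpha$ admitting some $g$ with $\left|\lambda(g) - \|\alpha(g)x - x\|\right| > 1/4$ is dense. Given a basic neighborhood $U$ of $\beta \in \Rep(\Gamma, \Gu)$ specified by a finite symmetric $F \subseteq \Gamma$ containing $1_\Gamma$, vectors $x_1, \ldots, x_n \in \Gu$ with $x = x_1$, and tolerance $\epsilon > 0$, set $F' = \{g^{-1} h : g, h \in F\}$ and $I = \{1, \ldots, n\}$. On the free real vector space $V_0$ with basis $\{[g, i] : g \in \Gamma, i \in I\}$, left multiplication gives a linear isometric $\Gamma$-action $h \cdot [g, i] = [hg, i]$. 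I would define a partial seminorm $p_0$ on the linear span of $\{[g, i] : g \in F', i \in I\}$ by $p_0\bigl(\sum_k c_k [g_k, i_k]\bigr) = \bigl\|\sum_k c_k \beta(g_k) x_{i_k}\bigr\|$, slightly inflated on off-diagonal inputs by $\epsilon/2$ where needed, and then extend $p_0$ to a seminorm $p$ on $V_0$ by the Banach-space analog of Fact \ref{fact_extension2}, namely by taking the infimum of $\sum_k |c_k| \, p_0(v_k)$ over representations of a vector as a sum of $\Gamma$-translates of vectors $v_k$ in the partial domain. To realize the required inequality I would also impose one isolated value: either choose $M > K + 1/4$ large (when $\lambda$ is bounded by $K$) and demand $p([g^*, 1] - [1, 1]) = M$ for some $g^* \notin F'$; or, when $\lambda$ is unbounded, choose $g^*$ with $\lambda(g^*)$ large and $g^*$ not expressible as a short product of elements of $F'$, imposing a small upper bound on $p([g^*, 1] - [1, 1])$.

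Quotienting $V_0$ by the kernel of the extended seminorm and completing yields a Banach space carrying a linear isometric $\Gamma$-action; the Kat\v etov functor of \cite{BY} then lifts this to a representation into the linear isometry group of $\Gu$, and the approximate homogeneity of $\Gu$ lets a small conjugation bring the resulting $\gamma$ within $\epsilon$ of $\beta$ on $F \times \{x_1, \ldots, x_n\}$, so $\gamma \in U$ and by construction $\|\gamma(g^*) x - x\|$ is $1/4$-far from $\lambda(g^*)$. The main obstacle will be verifying that adding the isolated constraint on $p([g^*, 1] - [1, 1])$ is consistent with the values already prescribed on the $F'$-indexed part: this requires a combinatorial bound on the minimum word length of $g^*$ in $F'$, parallel to the $M = \lceil 1/m \rceil + 1$ argument in the Urysohn sphere proof, together with checking that the infimum-over-paths formula does not collapse any of the prescribed $p_0$-values.
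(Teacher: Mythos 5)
Your overall architecture matches the paper's: reduce to comeagerness of $C(\lambda)$ for displacement pseudonorms, prove density of the open sets $O_x$ by prescribing a partial $\Gamma$-invariant norm on the free vector space over $\Gamma\times I$, extend it by the greatest-seminorm (infimum over decompositions) formula, and invoke the Kat\v etov functor for Banach spaces from \cite{BY} together with approximate homogeneity of $\Gu$. One detail you omit and the paper includes: $\beta$ must first be perturbed so that $\{\beta(f)x_i: f\in F, i\leq n\}$ is linearly independent before you identify the formal span of $\{[g,i]:g\in F', i\in I\}$ with a subspace of $\Gu$; otherwise $p_0$ has a kernel whose $\Gamma$-translates may collapse more of the quotient than intended.

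The genuine gap is your opening claim that, because $\Gu$ is unbounded, the argument should parallel the Urysohn space rather than the Urysohn sphere. It is the other way around, and this breaks your main branch. A linear isometry fixes the origin, so for any fixed $x$ one has $\|\gamma(g)x-x\|\leq 2\|x\|$ for every $\gamma\in\Rep(\Gamma,\Gu)$ and $g\in\Gamma$; correspondingly, any seminorm $p$ on $V_0$ for which your $\Gamma$-action is isometric satisfies $p([g^*,1]-[1,1])\leq p([g^*,1])+p([1,1])=2\|x_1\|$. Hence the constraint $p([g^*,1]-[1,1])=M$ with $M>K+1/4$ is flatly inconsistent whenever $K+1/4>2\|x_1\|$, and this case genuinely occurs: the left regular representation of $\Gamma$ on $\ell^1(\Gamma)$ extends to a representation in $\Gu$ by \cite{BY} and gives $\lambda_0(g)=\|\delta_g-\delta_1\|=2$ for all $g\neq 1$ at a unit vector $x_0$. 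Since every displacement pseudonorm is bounded (by $2\|x_0\|$), your ``unbounded $\lambda$'' fallback is never triggered, so this case is not covered at all. The paper instead observes that all relevant pseudonorms are bounded by $2$ and runs the Urysohn \emph{sphere} dichotomy: if infinitely many $g$ have $\lambda(g)$ below a fixed threshold, force the displacement to equal the maximal attainable value for cofinitely many $g$; otherwise pick $g^*$ that is not a short product of elements of $F'$ and force the displacement at $g^*$ to be small --- which is your second branch, but gated on the correct condition. With that dichotomy substituted for yours, the rest of your argument goes through essentially as in the paper.
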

\begin{proof}[Sketch of the proof.]
Fix an infinite group $\Gamma$ and some countable dense subset $D$ of the sphere in $\Gu$. Notice that for any $\alpha\in\Rep(\Gamma,\Gu)$ and any $x\in \Gu$ of norm one, the function on $\Gamma$ defined as $g\to \|\alpha(g)x-x\|$ is again a pseudonorm (bounded by $2$).  Therefore, as in the case of the Urysohn sphere, it suffices to show that for any pseudonorm $\lambda$ on $\Gamma$, bounded by $2$, we have that the set of those $\alpha \in \Rep(\Gamma,\Gu)$ such that for every $x \in D$ there exists $g \in \Gamma$ with 
\[ \left|\lambda(g)- \|\alpha(g)x-x\|\right|>1/4 \]
is comeager. That again reduces to showing that for a fixed $x \in D$ the open set of all $\alpha \in \Rep(\Gamma,\Gu)$ such that there exists $g \in \Gamma$ with
\[ \left|\lambda(g)- \|\alpha(g)x-x\|\right|>1/4, \]
is dense in $\Rep(\Gamma,\Gu)$.

Take $U$ to be an open neighborhood of some $\beta\in\Rep(\Gamma,\Gu)$ given by some finite set $x_1,\ldots,x_n\in D$ of unit linearly independent vectors, where $x=x_1$ and each $x_i,x_j$ lie in different orbits of $\beta$, some finite symmetric set $F\subseteq \Gamma$ containing the unit and some $\varepsilon>0$. Perturbing the representation $\beta$ by less than $\varepsilon$ if necessary, we may assume that the set $S=\{\beta(f)x_i: f\in F,i\leq n\}$ consists of linearly independent elements. As in the Urysohn sphere case, we get that $N':F\times I^2\rightarrow\Rea$ defined as $N'(f,i,j)=\|x_i-\beta(f)x_j\|$ is a partial generalized pseudonorm, where $I=\{1,\ldots,n\}$. By the same arguments as for the Urysohn sphere we get that $N'$ extends to a generalized pseudonorm $N:\Gamma\times I^2\rightarrow\Rea$ that `avoids' $\lambda$, i.e. there exists $g\in\Gamma$ such that $|\lambda(g)-N(g,1,1)|>1/4$.

Now let $X$ be the vector space spanned by the set $\{g.x_i: g\in\Gamma,i\leq n\}$. Note that $\Gamma$ acts canonically on $X$, so for any $g\in\Gamma$ and $x\in X$ the element $g.x$ is defined. By $Y$ we denote the finite-dimensional subspace spanned by $\{g.x_i:g\in F,i\leq n\}$. We define a norm on $X$. First we define `a partial norm' $\|\cdot\|'$ on $X$ and then show how it naturally extends to a norm on $X$ so that the canonical action of $\Gamma$ on $X$ is by linear isometries. Since we may identify $Y$ with the finite-dimensional subspace of $\Gu$ spanned by $S$ we set for any $x\in Y$, $$\|x\|'=\|x\|_\Gu.$$ Next, for any $g,h\in\Gamma$ and $i,j\leq n$ we set $$\|g.x_i-h.x_j\|'=\|h.x_j-g.x_i\|'=N(g^{-1}h,i,j).$$ Note that in case that $g.x_i-h.x_j\in Y$ we have $\|g.x_i-h.x_j\|_\Gu=N(g^{-1}h,i,j)$, so our definition is consistent. For any $g\in\Gamma$ and $i\leq n$ we set $$\|g.x_i\|'=1,$$ this is again consistent, and finally we make $\|\cdot\|'$ invariant under the canonical action of $\Gamma$, i.e. for any $g\in\Gamma$ and $x\in X$ such that $\|x\|'$ has been defined above we set $$\|g.x\|'=\|x\|'.$$ This is again readily checked to be consistent.

Now we set $\|\cdot\|$ to be the greatest norm on $X$ that extends $\|\cdot\|'$. That can be formally defined as follows. For any $x\in X$ we set $$\|x\|=\inf\{\sum_{i=1}^m |\alpha_i|\|x_i\|':\; (\alpha_i)_{i\leq m}\subseteq \Rea,\; (x_i)_{i\leq m}\subseteq \mathrm{dom}(\|\cdot\|'),\; x=\sum_{i=1}^m \alpha_i x_i\}.$$

Finally, using the Kat\v etov functor for Banach spaces, we may extend the action $\Gamma$ on $X$ to an action $\gamma$ of $\Gamma$ on $\Gu$, and moreover in such a way that $\gamma\in U$ and $|\|\gamma(g)x-x\|-\lambda(g)|>1/4$ which is what we were supposed to show.
\end{proof}
\begin{rem}
One may ask what happens when $\Gamma$ is a finite group. Is there a generic representation of $\Gamma$ in $\Gu$? One can check Section 4 in \cite{Do} where \fra classes of representations of groups in Banach spaces were considered. Although we have not formally verified it, we believe the class of finite-dimensional Banach spaces with group actions considered there, if the finite group $F$ is fixed, is a \fra class and the corresponding limit is a generic representation of $F$ in $\Gu$.
\end{rem}

\subsection{Free actions on countable metric spaces}
Let $\Gamma$ be a countable discrete group and $X$ a countable structure. Notice that the set $\Rep_F(\Gamma,X)$ of all free actions of $\Gamma$ on $X$ is a $G_\delta$ set invariant under the conjugacy action of $\Gamma$ and therefore is a Polish space itself. One may thus also study conjugacy classes in these spaces. It is obvious from the proofs of Theorems \ref{thm_meagerclassesUrysohn} and \ref{thm_meagerclassesGurarij} that the main difference between spaces $\Rep(\Gamma,\Aut(M))$ and $\Rep(\Gamma,\Iso(N))$, where $M$ is a countable metric space viewed as a countable discrete structure and $N$ is a Polish metric space, that in the latter `locally free' actions are dense. Here by `locally free actions are dense' we mean that for any finite set $\{x_1,\ldots,x_n\}$ the set of those actions whose restriction on the orbit of $x_i$, for $i\leq n$, is regular is dense. Since this is also satisfied, for obvious reasons, in the spaces of free actions we can prove by the same means as in the proof of Theorem \ref{thm_meagerclassesUrysohn} the following.
\begin{thm}
Let $\Gamma$ be a countably infinite group. Then all conjugacy classes are meager in the spaces $\Rep_F(\Gamma,\Aut(\QU))$, $\Rep_F(\Gamma,\Aut(\QU_1))$ and $\Rep_F(\Gamma,\Aut(\mathcal{R}))$, where $\mathcal{R}$ is the random graph.
\end{thm}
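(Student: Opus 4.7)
I would follow the template of Theorem \ref{thm_meagerclassesUrysohn}, exploiting the fact that the perturbation step there already produces free actions. Since $\Rep_F(\Gamma, X)$ is a $G_\delta$ subspace of the Polish space $\Rep(\Gamma, X)$ for each $X \in \{\QU, \QU_1, \mathcal{R}\}$, it is Polish in its own right and the Baire category framework applies. Fix any $\alpha' \in \Rep_F(\Gamma, X)$ and a point $x_0 \in X$, viewing $\mathcal{R}$ as a metric space with distances $1$ (edge) and $2$ (non-edge). Setting $\lambda_0(g) = d_X(\alpha'(g)x_0, x_0)$ and
\[ C(\lambda_0) = \bigl\{\alpha \in \Rep_F(\Gamma, X) : \forall x \in X\; \exists g \in \Gamma,\; |\lambda_0(g) - d_X(\alpha(g)x, x)| > 1/4\bigr\}, \]
any conjugate $\beta = h\alpha' h^{-1}$ with $h \in \Aut(X)$ satisfies $d_X(\beta(g) h(x_0), h(x_0)) = \lambda_0(g)$ for every $g \in \Gamma$, and, crucially, $h(x_0)$ is a genuine element of $X$ itself (no passage to a completion is needed, unlike in the $\UU$ case). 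Hence $[\alpha'] \cap C(\lambda_0) = \emptyset$, and it suffices to show that $C(\lambda)$ is comeager for every such $\lambda$.

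Writing $C(\lambda) = \bigcap_{x \in X} O_x(\lambda)$, where $O_x(\lambda) = \{\alpha : \exists g,\ |\lambda(g) - d_X(\alpha(g)x, x)| > 1/4\}$ is manifestly open, and using countability of $X$, the task reduces to showing that each $O_x(\lambda)$ is dense in $\Rep_F(\Gamma, X)$.

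For $X \in \{\QU, \QU_1\}$, the density argument from Theorem \ref{thm_meagerclassesUrysohn} transfers almost verbatim, provided that all perturbation parameters (the tolerance $\varepsilon$ and the prescribed maximal value $M$) are taken rationally, so that the resulting generalized pseudonorm $N$ is rational-valued. The key point for our setting is that the maximal extension furnished by Fact \ref{fact_extension} automatically satisfies $N(g, i, i) > 0$ for every $g \neq 1_\Gamma$ and every $i$, so the induced action on $\bigcup_i \Gamma/H_i$ is already free (all point-stabilizers are trivial). Applying the \kat functor and the finite extension property of $\QU$ (respectively $\QU_1$) then realizes this action as the desired $\beta' \in U \cap O_x(\lambda) \cap \Rep_F(\Gamma, X)$ inside any prescribed basic open neighborhood $U$.

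For $X = \mathcal{R}$, the perturbation is more elementary: given a basic open neighborhood $U$ of some $\beta$ cut out by finitely many pairs $(g_i, x_j)$, choose any $g \in \Gamma$ not occurring in this data (possible since $\Gamma$ is infinite), and use the extension property of the random graph to introduce a fresh vertex playing the role of $\beta'(g) x$, with the opposite edge-relation to $x$ than what $\lambda(g)$ dictates, placed outside every existing $\beta$-orbit to preserve freeness; the remainder of $\beta'$ is filled in by a standard back-and-forth. The principal technical obstacle across all three cases is merely the rationality-and-freeness bookkeeping inside the metric perturbation, namely verifying that rounding to $\Rat$-valued pseudonorms is compatible with the construction of Fact \ref{fact_extension} and that freeness is preserved; given the form of that construction, both are essentially automatic, so no genuinely new ideas beyond those in Theorem \ref{thm_meagerclassesUrysohn} are required.
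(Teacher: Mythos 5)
Your overall plan --- pass to the Polish space $\Rep_F(\Gamma,X)$, reduce to density of the open sets $O_x(\lambda)$, and rerun the density argument of Theorem \ref{thm_meagerclassesUrysohn} --- is exactly the route the paper indicates (the paper itself only sketches this in the sentence preceding the theorem), and your observations that $\Rep_F$ is Polish and that every conjugate of $\alpha'$ realizes $\lambda_0$ exactly at the point $h(x_0)\in X$ are correct. However, your treatment of the density step misreads the topology. The groups $\Aut(\QU)$, $\Aut(\QU_1)$, $\Aut(\mathcal{R})$ carry the permutation-group topology, so a basic neighborhood $U$ of $\beta$ prescribes $\gamma(f)x_i=\beta(f)x_i$ \emph{exactly} for finitely many $f$ and $x_i$: there is no tolerance $\varepsilon$, and no ``rounding'' or ``perturbation parameters'' to choose. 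In particular the step $N'_0=N_0+\varepsilon/2$ from Theorem \ref{thm_meagerclassesUrysohn} is unavailable here --- changing any prescribed distance throws you out of $U$. What replaces it, and this is precisely the content of the paper's remark that local freeness holds ``for obvious reasons in the spaces of free actions,'' is that freeness of $\beta$ already gives $N_0(g,i,i)=d(x_i,\beta(g)x_i)>0$ for all $g\neq 1_\Gamma$ in $F'$; the exact partial pseudonorm therefore has trivial kernels, and its maximal extension (constant $M$ off the finite data) stays positive off the identity because its finitely many prescribed positive values are bounded below. This is the heart of why the statement holds for $\Rep_F$ while failing for $\Rep$, not bookkeeping.

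The second genuine gap is the final step ``applying the \kat functor \ldots\ realizes this action as the desired $\beta'\in\Rep_F$.'' The \kat extension of a free action on $Y$ to an action on $\QU$ need not be free: if $g\in\Gamma$ has order $2$ and $y\in Y$, the finitely supported Katětov function $f=\min\bigl(C+d(y,\cdot),\,C+d(g.y,\cdot)\bigr)$ with rational $C\geq d(y,g.y)/2$ satisfies $g.f=f$, so the extended action fixes the point $f$ and for torsion groups your $\beta'$ falls outside $\Rep_F$. You must instead run a back-and-forth entirely inside the class of free actions: at each stage realize the required one-point rational Katětov extension and then adjoin the entire free orbit of the new point with the maximal compatible metric (Fact \ref{fact_extension2} with trivial kernels), so that every finite stage --- hence the union --- is a free action on a copy of $\QU$, and finally use ultrahomogeneity to match the prescribed configuration $\{\beta(f)x_i\}$. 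The same correction is needed in your $\mathcal{R}$ argument, where moreover the ``fresh vertex playing the role of $\beta'(g)x$'' cannot be ``placed outside every existing orbit'': it is by definition a point of the $\beta'$-orbit of $x$, so what you are really doing is redefining that whole orbit, again by a freeness-preserving (free-amalgamation) back-and-forth.
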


This stands in stark contrast with Rosendal's results from \cite{Ro2} which say that all finitely generated groups with the RZ property have a generic representation in $\Iso(\QU)$, and all finitely generated groups with the 2-RZ property have a generic representation in $\Aut(\mathcal{R})$.

\subsection{Generic turbulence}
The notion of turbulence was introduced by Hjorth in \cite{Hjo} in order to develop methods for proving non-classifiability by countable structures. Suppose that $G$ is a Polish group acting continuously on a Polish space $X$. Fix a point $x\in X$, some open neighborhood $U$ of $x$ in $X$ and some open neighborhood $V$ of the unit in $G$. The \emph{local orbit} $O(U,V)$ of $x$ is the set $\{y\in U:\exists g_1,\ldots, g_n\in V\; (y=g_1\ldots g_n.x\wedge \forall i\leq n\; (g_1\ldots g_i.x\in U))\}$.

A point $x\in X$ is \emph{turbulent} if for every open neighborhood $U$ of $x$ in $X$ and every open neighborhood $V$ of $1_G\in G$, the local orbit $O(U,V)$ of $x$ is somewhere dense (in $U$). If there is a $G$-invariant comeager subset $Y\subseteq X$ such that every $G$-orbit in $Y$ is dense and meager and every point in $Y$ is turbulent, then we say that the $G$-action on $X$ is \emph{generically turbulent}. It is shown in \cite{Hjo} that as a consequence the corresponding orbit equivalence relation on $X$ is not classifiable by countable structures.\\

Our aim is now to show that the orbit equivalence given by the action of $\Iso(\Ur)$ on  $\Rep(\Gamma,\Ur)$ by conjugation is generically turbulent whenever $\Gamma$ is infinite. By Theorem 3.21 in \cite{Hjo}, it is sufficient to prove that all equivalences classes are meager, which we have proved in Theorem \ref{thm_meagerclassesUrysohn}, and that there exists a turbulent element whose equivalence class is dense. That will be the content of the following theorem.

We note that Kerr, Li and Pichot \cite{KLP} prove analogous statements for unitary representations. Namely, in \cite[Theorem 3.3]{KLP} they prove that for any countable group $\Gamma$ the action of $U(\mathcal{H})$ on the space $\Rep_\lambda(\Gamma,\mathcal{H})$ of all unitary representations weakly contained in the regular representation is generically turbulent. It follows from \cite[Theorem 2.5]{KLP} that for any countable group without property (T) the action of $U(\mathcal{H})$ on $\Rep(\Gamma,\mathcal{H})$ is generically turbulent.

\begin{thm}\label{thm_turbulent_element}
Let $\Gamma$ be a countably infinite group. Then there exists $\alpha\in\Rep(\Gamma,\Ur)$ whose conjugacy class is dense and which is turbulent.
\end{thm}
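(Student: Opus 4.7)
The plan is to take $\alpha$ to be a suitably universal representation of $\Gamma$ in $\Iso(\Ur)$, establish that the orbit of $\alpha$ is dense by a \fra construction combined with the \kat functor, and then verify turbulence by exploiting the extension property of $\Ur$ to factor each conjugating isometry into small pieces.

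For density of the orbit, I would enumerate a countable dense sequence of partial representation data in $\Rep(\Gamma,\Ur)$, i.e., tuples of finite rational metric spaces equipped with partial $\Gamma$-actions. Using the \kat functor for metric spaces, I would iteratively build a countable rational metric space carrying a full $\Gamma$-action that contains each such partial datum as an equivariantly embedded subspace. Completing gives $\alpha\in\Rep(\Gamma,\Ur)$; the presence of every partial datum, together with the extension property of $\Ur$, ensures that every basic open set in $\Rep(\Gamma,\Ur)$ contains a conjugate of $\alpha$. By Theorem \ref{thm_meagerclassesUrysohn} all conjugacy classes are meager, so by Hjorth's criterion (\cite[Theorem 3.21]{Hjo}) generic turbulence reduces to showing that this $\alpha$ is itself turbulent.

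For turbulence, fix a basic open neighborhood $U$ of $\alpha$ determined by finite symmetric $F\ni 1_\Gamma$, $F$-invariant finite $A\subseteq\Ur$, and $\varepsilon>0$, and a basic open neighborhood $V$ of $1_{\Iso(\Ur)}$ determined by finite $B\subseteq A$ and $\delta>0$. The strategy is to exhibit a smaller neighborhood $W\subseteq U$ of $\alpha$ such that every conjugate of $\alpha$ lying in $W$ already lies in the local orbit $O(\alpha,U,V)$; density of the orbit in $W$ then yields the required somewhere-density. Given $\gamma=g\cdot\alpha\in W$, one factors $g$ as a product $g=h_n\cdots h_1$ with each $h_i\in V$ and every intermediate conjugate $h_i\cdots h_1\cdot\alpha$ remaining in $U$. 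At stage $i$, one first specifies $h_i$ as a partial isometry on a growing finite subset of $\Ur$ that displaces $B$ by less than $\delta$ and moves the running conjugate one $n$-th of the way toward $\gamma$ on $F\cdot A$; the universality of $\alpha$ and the extension property of $\Ur$ then guarantee that this partial isometry extends to a global isometry $h_i\in V$.

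The main obstacle is keeping every intermediate conjugate inside $U$, not merely the final one. I would address this by shrinking $W$ so that the total discrepancy between $g\cdot\alpha$ and $\alpha$ on $F\cdot A$ is at most $\varepsilon/2$, and by distributing this discrepancy linearly across the $n$ small steps so that after any partial product the cumulative displacement on $F\cdot A$ is at most $\varepsilon/2<\varepsilon$. Such linear interpolation is available because the rational Urysohn space $\Rat\Ur$ admits arbitrarily fine rational refinements of distances, and the \kat functor produces compatible small partial isometries at each scale. Once turbulence of $\alpha$ is established, Hjorth's theorem delivers the desired generic turbulence of the conjugation action of $\Iso(\Ur)$ on $\Rep(\Gamma,\Ur)$.
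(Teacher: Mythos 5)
Your overall strategy coincides with the paper's: build $\alpha$ as a \fra-type limit of finite partial $\Gamma$-actions on rational metric spaces, get density of the conjugacy class from the extension property, and prove turbulence by linearly interpolating the metric data between $\alpha$ and a nearby target, realizing each interpolation step by an isometry close to the identity. However, the step you dispose of in one sentence --- ``the \kat functor produces compatible small partial isometries at each scale'' --- is precisely the technical heart of the argument, and it is not automatic. To move the running conjugate ``one $n$-th of the way'' you must place the $k+1$ interpolated configurations $P_j=\frac{k-j}{k}P_\alpha+\frac{j}{k}P_\beta$ (each carrying the same partial $\Gamma$-action data) inside a \emph{single} metric $\Gamma$-space in which corresponding points of consecutive layers are at distance exactly $\delta=D(P_\alpha,P_\beta)/k$. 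One must verify that this assignment satisfies all triangle inequalities, including the mixed ones that chain group elements within a layer with the small jumps between layers; this is the content of the paper's Lemma \ref{lem_turbulent1} (amalgamating two generalized pseudonorms at cross-distance equal to their supremum distance) combined with Lemma \ref{lem_turbulent2}, and it requires a genuine, if elementary, case analysis. Without it, the existence of the global isometries $h_i\in V$ with the stated displacement bounds is unjustified: closeness of two distance-functions does not by itself produce an isometry of $\Ur$ moving the corresponding points by that amount.

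A second issue is your reduction of somewhere-density to the claim that \emph{every} conjugate of $\alpha$ lying in $W$ belongs to the local orbit. That would require factoring the given $g$ \emph{exactly} as $h_n\cdots h_1$ with $h_i\in V$, which the interpolation method does not deliver: it produces a product of small isometries whose conjugate agrees with the target only on the finite data defining the ambient neighborhood, so you land \emph{near} $g\cdot\alpha$, not at it. The correct (and sufficient) formulation, which is what the paper proves, is that the local orbit $O(U,V)$ is dense in $U$: for every $\beta\in U$ and every basic neighborhood $W\subseteq U$ of $\beta$ one exhibits some element of the local orbit inside $W$. Your argument becomes correct once restated this way, since the interpolation can target the finite data defining any prescribed $W$. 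Finally, note that the paper's construction of $\alpha$ as a \fra limit of finitely generated generalized pseudonorms requires $\Gamma$ to be finitely generated for countability of the class, and the infinitely generated case is handled separately by a direct limit; your sketch does not address this distinction.
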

\begin{proof}
First we work with finitely generated groups. Fix some finitely generated group $\Gamma$. Let $I$ be some finite index set and call a generalized pseudonorm $N$ on $\Gamma\times I^2$ \emph{finitely generated} if there exists a sufficient partial generalized pseudonorm with finite domain such that $N$ is its maximal extension (guaranteed by Fact \ref{fact_extension2}). An \emph{embedding} between two generalized pseudonorms $N_1$ on $\Gamma\times I_1^2$ and $N_2$ on $\Gamma\times I_2^2$ is an injection $\iota:I_1\hookrightarrow I_2$ such that for every $g\in \Gamma$ and $i,j\in I_1$ we have $N_1(g,i,j)=N_2(g,\iota(i),\iota(j))$. Typically, $\iota$ will be just the inclusion.
\begin{lem}
The class of all finitely generated rational valued generalized pseudonorms is a \fra class.
\end{lem}
Note that being rational valued means that it is the maximal extension of some partial generalized pseudonorm with finite domain which is rational valued.
\begin{proof}
This is straightforward, we only check the amalgamation property. Suppose we are given such generalized pseudonorms $N_1$, $N_2$ and $N_3$ defined on $\Gamma\times I_1$, $\Gamma\times I_2$ and $\Gamma\times I_3$ respectively, and $I_1\subseteq I_2\cap I_3$. That is, there are embeddings from $N_1$ into $N_2$ and $N_3$. Suppose that $N_i$ is the maximal extension of a partial generalized pseudonorm $P_i$ defined on $A_i\subseteq \Gamma\times I_i^2$, where $i\in\{2,3\}$. Set $I_4=I_2\cup I_3$ and $A_4=A_2\cup A_3$. Define a partial generalized pseudonorm $P_4$ on $A_4\subseteq \Gamma\times I_4^2$ so that it extends $P_2$, resp. $P_3$. Note that this is well-defined as $P_2$ and $P_3$ agree on $A_2\cap A_3$. Set $N_4$ to be the maximal extension of $P_4$. This is readily checked to be the desired amalgam.
\end{proof}
The \fra limit is some rational valued generalized pseudonorm $N_F$ defined on $\Gamma\times I_F^2$, for some infinite $I_F$. $N_F$ corresponds to an action of $\Gamma$ on some rational valued countable metric space with orbits indexed by $I_F$, and for each $i\in I_F$ there is a distinguished base point $x_i$ of that orbit. It is straightforward to check that this metric space is isometric to $\Rat\Ur$ and that $D=\{x_i:i\in I_F\}$ is dense. Denote by $\alpha$ this action on $\Rat\Ur$ and use the same letter also for its extension on $\Ur$, the completion of $\Rat\Ur$. We aim to show that $\alpha$ is the desired element from the statement of Theorem \ref{thm_turbulent_element}.

\begin{lem}
The conjugacy class of $\alpha$ is dense.
\end{lem}
\begin{proof}
Fix an open neighborhood $O$ of some $\beta\in\Rep(\Gamma,\Ur)$ given by some $x_1,\ldots,x_n\in \Ur$, some finite symmetric $F\subseteq \Gamma$ containing the unit and some $\varepsilon>0$. Set $F'=\{g^{-1}h:g,h\in F\}$. Set $I=\{1,\ldots,n\}$. Define a partial generalized pseudonorm $P$ on $F'\times I^2\subseteq \Gamma\times I^2$ as follows: set $P(g,i,j)=d_\Ur(x_i,g.x_j)$ for $(g,i,j)\in F'\times I^2$. By $\varepsilon$-perturbing $P$ a bit if necessary, we may assume that $P$ is rational valued. Now using the property of the \fra limits we see that that there are $i_1,\ldots,i_n\in I_F$ such that $N_F(g,i_k,i_l)=P(g,k,l)$ for every $g\in F'$ and $k,l\in I$. Clearly we can then find an isometry $\phi\in\Iso(\Ur)$ such that $\phi^{-1}\alpha\phi\in O$.
\end{proof}
 
 The rest of the proof will therefore focus on proving the following proposition.
\begin{prop}\label{prop:turbulent_element}
The element $\alpha$ is turbulent
\end{prop}
Suppose, without loss of generality, that $I_F=\Nat$. Fix some open neighborhood $U$ of $\alpha$ which we may assume is given by $x_1,\ldots,x_n\in D\subseteq \Rat\Ur$, finite symmetric $F\subseteq \Gamma$ containing the unit and some $\varepsilon>0$, and some open neighborhood $V$ of $\mathrm{id}$ in $\Iso(\Ur)$ which we may assume is given also by the same $x_1,\ldots,x_n\in D\subseteq \Rat\Ur$ and $\varepsilon>0$. Recall that by the construction, for every $f,g\in F$, $i,j\leq n$ we have $d_\Ur(\alpha(f).x_i,\alpha(g).x_j)=N_F(g^{-1}f,i,j)$. Set $F'=\{g^{-1}f:g,f\in F\}$ and $I=\{1,\ldots,n\}$. The restriction of $N_F$ onto $\Gamma\times I^2$ is a finitely generated generalized pseudonorm. Without loss of generality we may assume that it is generated by the values on $A=F'\times I^2$.

Now we check that the local orbit $O(U,V)$ of $\alpha$ is dense in $U$. Take some $\beta\in U$ and an open neighborhood $W\subseteq U$ of $\beta$ given by $x_1,\ldots,x_n,\ldots,x_m\in D\subseteq \Rat\Ur$, some finite symmetric $F\subseteq H\subseteq \Gamma$ and some $\varepsilon>\varepsilon'>0$. Set again $H'=\{g^{-1}f:g,f\in H\}$, $J=\{1,\ldots,m\}$ and let $P_\beta$ be the partial generalized pseudonorm on $H'\times J^2$ defined as $P_\beta(g,i,j)=d_\Ur(x_i,\beta(g).x_j)$. Again, by perturbing $P_\beta$ a bit if necessary, we may assume that $P_\beta$ is rational valued. Analogously, set $P_\alpha$ to be the restriction of $N_F$ onto $H'\times J^2$. We may suppose, without loss of generality, that the finitely generated generalized pseudonorm $N_F\upharpoonright \Gamma\times J^2$ is generated by its values on $H'\times J^2$, i.e. it is generated by $P_\alpha$.

The goal is now to find an isometry $\phi\in\Iso(\Ur)$ such that the partial generalized pseudonorm determined by the action $\phi\circ \alpha\circ \phi^{-1}$, the subset $H'\subseteq \Gamma$ and elements $\{x_1,\ldots,x_m\}$ is equal to $P_\beta$. By the definition of the local orbit $O(U,V)$, the desired isometry $\phi$ must be obtained as some product $\phi_n\circ\ldots\circ \phi_1$, where for each $i\leq n$, $\phi_i\in V$ and $\phi_i\circ\ldots\circ\phi_1\circ\alpha\circ\phi_1^{-1}\circ\ldots\circ\phi_i^{-1}\in U$. In order to do it, we shall find a `path of partial generalized pseudonorms $P_\alpha=P_0, P_1,\ldots,P_n=P_\beta$ on $H'\times J^2$ such that the neighbours on the path are close enough to each other that one can get from one to the other by conjugating with an isometry from $V$. The path will consist of convex combinations of $P_\alpha$ and $P_\beta$. It will be made precise below.
\medskip

Before we proceed further we need a few notions and basic lemmas.
\begin{defin}\label{defin:dist_on_pseudonorms}
Suppose that $N_1$ and $N_2$ are two partial generalized pseudonorms defined on the same set of the form $A\times L^2$ for some $A\subseteq \Gamma$. Then we define the distance $D(N_1,N_2)$ between them as the supremum distance, i.e. $\sup_{a\in A\times L^2} |N_1(a)-N_2(a)|$.

For any $n\in\Nat$ denote by $L\times n$ the set $\{(i,j):i\in L,j\leq n\}$ and for any $j\leq n$ denote by $L(j)\subseteq L\times n$ the subset $\{(i,j):i\in L\}$. Finally, denote by $(L\times n)'\subseteq (L\times n)^2$ the symmetric set $\bigcup_{i\in L,j<n}\{((i,j),(i,j+1))\}\cup\{((i,j+1),(i,j))\}$.
\end{defin}
\begin{lem}\label{lem_turbulent1}
Pick some partial generalized pseudonorms $N_1$, $N_2$ defined on the same set of the form $A\times L^2$ for some $A\subseteq \Gamma$, like in Definition~\ref{defin:dist_on_pseudonorms} above. Then there exists a partial generalized pseudonorm $N$ defined on $B=(A\times L(1)^2)\cup (A\times L(2)^2) \cup (\{1_\Gamma\}\times (L\times n)')$ such that
\begin{itemize}
\item $N(a,(i,k),(j,k))=N_k(a,i,j)$, for $k\in\{1,2\}$, $a\in A$ and $i,j\in L$;
\item $N(1_\Gamma, (i,j),(i,j+1))=D(N_1,N_2)$.

\end{itemize}
\end{lem}
\begin{proof}
Define $N$ as in the statement of the lemma. We must check that it is a partial generalized pseudonorm. The only non-trivial thing to check is the triangle inequality. Suppose that the triangle inequality does not hold. That is, there are $g,g_1,\ldots,g_j\in A$, $(i_1,i'_1),\ldots,(i_{j+1},i'_{j+1})\in L\times n$ such that
\begin{itemize}
\item $g=g_1\ldots g_j$;
\item $(g_l,(i_l,i'_l),(i_{l+1},i'_{l+1}))\in B$ for $l\leq j$ and $(g,(i_1,i'_1),(i_{j+1},i'_{j+1}))\in B$;
\item $N(g,(i_1,i'_1),(i_{j+1},i'_{j+1}))>\sum_{l=1}^j N(g_l,(i_l,i'_l),(i_{l+1},i'_{l+1}))$.

\end{itemize}
The case when $g=1_\Gamma$ and $i'_1\neq i'_{j+1}$, i.e. $(g,(i_1,i'_1),(i_{j+1},i'_{j+1}))\in\{1_\Gamma\}\times (L\times n)'$ is straightforward. So let us assume that $i'_1=i'_{j+1}=1$, the case when it is equal to $2$ is symmetric.

Suppose also that $j$ above is the least possible. We claim that for no $l<j$ we have $i'_l=i'_{l+1}=i'_{l+2}$. Indeed, by the triangle inequality we have $$N(g_l,(i_l,i'_l),(i_{l+1},i'_{l+1}))+N(g_{l+1},(i_{l+1},i'_{l+1}),(i_{l+2},i'_{l+2}))=$$ $$N_{i_l}(g_l,(i_l,i'_l),(i_{l+1},i'_{l+1}))+N_{i_l}(g_{l+1},(i_{l+1},i'_{l+1}),(i_{l+2},i'_{l+2}))\geq$$ $$N_{i_l}(g_l g_{l+1}, (i_l,i'_l),(i_{l+2},i'_{l+2}))=N(g_l g_{l+1}, (i_l,i'_l),(i_{l+2},i'_{l+2})).$$
So we may shorten the decomposition of $g$ which contradicts that $j$ was the least possible.

It follows that without loss of generality we may assume that for $l\leq j$ odd we have that $g_l\in A$ and $i'_l=i'_{l+1}$, while for $l\leq j$ even we have $g_l=1_\Gamma$ and $i'_l\neq i'_{l+1}$. Suppose that for some $l\leq j$ we have that $i'_l=2$. It follows that $l<j$ therefore by the paragraph above we have $$N(g_{l-1},(i_{l-1},i'_{l-1}),(i_l,i'_l))+N(g_l,(i_l,i'_l),(i_{l+1},i'_{l+1}))+$$ $$N(g_{l+1},(i_{l+1},i'_{l+1}),(i_{l+2},i'_{l+2}))=D(N_1,N_2)+N_2(g_l,i_l,i_{l+1})+D(N_1,N_2)\geq$$ $$N_1(g_l,i_l,i_{l+1}).$$
Therefore we may again shorten the decomposition which again contradicts that $j$ was the least possible. It follows that for no $l\leq j$ we have that $i'_l=2$. However, then we are clearly done.
\end{proof}
The proof of the following lemma is very easy and left to the reader. Note in particular that if $N$ is a generalized pseudonorm and $r>0$ is a positive real, then $rN$ is a generalized pseudonorm, and if $N_1$, $N_2$ are two generalized pseudonorms, then $N_1+N_2$ is as well.
\begin{lem}\label{lem_turbulent2}
Suppose that we have generalized pseudonorms $N_1$ and $N_2$ as above. Take any $0<t<1$. Then for the convex combination $N_3=tN_1+(1-t)N_2$ we have $D(N_1,N_3)=(1-t)D(N_1,N_2)$ and $D(N_2,N_3)=tD(N_1,N_2)$.
\end{lem}

We are now ready to finish the proof of Proposition \ref{prop:turbulent_element} and therefore the proof of Theorem \ref{thm_turbulent_element}.

Set $M=D(P_\alpha,P_\beta)$. Let $\delta>0$ be an arbitrary rational number such that $\delta<\varepsilon$ and $k=M/\delta+1$ is in $\Nat$. For every $1\leq j< k$ let $P_j$ be the convex combination $\frac{k-j}{k}P_\alpha+\frac{j}{k}P_\beta$ defined on $H'\times J^2$. Notice that since for every $i,i'\in J$ we have $P_\alpha(1_\Gamma,i,i')=P_\beta(1_\Gamma,i,i')$, we also have for every $1\leq j<k$, $P_\alpha(1_\Gamma,i,i')=P_j(1_\Gamma,i,i')$.  Moreover, since for every $i,i'\in I$ and every $g\in F'$ we have $|P_\alpha(g,i,i')-P_\beta(g,i,i')|<\varepsilon$, we also have for every $1\leq j<k$, $|P_\alpha(g,i,i')-P_j(g,i,i')|<\varepsilon$. Finally, for any $0\leq j<k$ (where we identify $0$ with $\alpha$) and for any $i,i'\in J$, $g\in H'$ we have $|P_j(g,i,i')-P_\beta(g,i,i')|\leq M-j\cdot\delta$.\\

Now we define a rational valued sufficient partial generalized pseudonorm $N$ on $C=(\bigcup_{j\leq k} H'\times J(j)^2)\cup (\{1_\Gamma\}\times (J\times k)')$ as follows:
\begin{itemize}
\item For any $b\in \{1_\Gamma\}\times (J\times k)'$ we set $N(b)=\delta$.
\item For any $g\in H'$ and $i,i'\leq m$ we set $N(g,(i,1),(i',1))=P_\alpha(g,i,i')$ and  $N(g,(i,k),(i',k))=P_\beta(g,i,i')$.
\item For any $g\in H'$, $i,i'\leq m$ and $1<j<k$ we set $N(g,(i,j),(i',j))=P_j(g,i,i')$.

\end{itemize}
It follows from Lemma \ref{lem_turbulent2} and repeated use of Lemma \ref{lem_turbulent1} that $N$ is indeed a rational valued sufficient generalized pseudonorm.\\

By the extension property of the \fra limit $N_F$ we may realize $N$ as the extension of $N_F\upharpoonright \Gamma\times I^2$ and as a subfunction of $N_F$. To simplify the notation, we view $C$ as a subset of $\Gamma\times I_F^2$ and assume that $N_F\upharpoonright C=N\upharpoonright C$. Notice that each element of $C$ is of the form $(g,(i,j),(i',j'))$. We shall therefore denote the elements of $D$ corresponding to these pairs (e.g. $(i,j)$) of indices by $x_{(i,j)}$. With this identification, the original elements $x_1,\ldots,x_m\in D$ will now correspond to elements $x_{(1,1)},\ldots,x_{(m,1)}$ as the index set $J=\{1,\ldots,m\}$ has been identified with the set $J(1)$.

By the standard arguments using the homogeneity of the Urysohn space, we can find isometries $\phi_1,\ldots,\phi_k\in\Iso(\Ur)$ such that:
\begin{itemize}
    \item For each $l\leq k$, we have $\phi_l(x_{(i,j)})=x_{(i,j-1)})$, for every $i\leq m$ and $1<j\leq k$ (the subspaces $\{x_{(i,j)}:i\leq m\}$ and $\{x_{(i,j-1)}:i\leq m\}$ are isometric). This in particular implies that $\phi_l\in V$, for every $l\leq k$.
    \item For every $l\leq k$, every $i\leq n$ and $f\in F$ we have $d_\Ur(\alpha(f).x_i,\phi_l\circ\ldots\circ\phi_1\circ\alpha\circ\ldots\circ\phi_l^{-1}(f).x_i)<\varepsilon$. This implies that for every $l\leq k$ we have $\phi_l\circ\ldots\circ\phi_1\circ\alpha\circ\ldots\circ\phi_l^{-1}\in U$.
    \item For every $l\leq k$, every $i\leq m$ and $h\in H$ we have $d_\Ur(\alpha(h).x_i,\phi_l\circ\ldots\circ\phi_1\circ\alpha\circ\ldots\circ\phi_l^{-1}(h).x_i)<M-l\cdot\delta$. This implies that $\phi_k\circ\ldots\circ\phi_1\circ\alpha\circ\ldots\circ\phi_k^{-1}\in W$.
\end{itemize}
This finishes the proof of Proposition \ref{prop:turbulent_element} and also Theorem \ref{thm_turbulent_element} for finitely generated groups.\\

If $\Gamma$ is infinitely generated, then write $\Gamma$ as an increasing union $\Gamma_1\leq \Gamma_2\leq\ldots$ of finitely generated subgroups. Denote by $\alpha_n$, for $n\in\Nat$, the action of $\Gamma_n$ on $\Rat\Ur$ obtained as a \fra limit as above. Since the \fra limits are uniquely characterized by the extension property, it is easy to check that for every $n\in\Nat$ the restriction of $\alpha_{n+1}$ onto $\Gamma_n$ is isomorphic to $\alpha_n$. Denote by $U_n$ the copy of $\Rat\Ur$ on which $\alpha_n$ acts. By the identification of $\alpha_n$ with the restriction of $\alpha_{n+1}$ we may view $U_n$ as a subspace of $U_{n+1}$. Moreover, it is easy to see that $U_n$ is dense in $U_{n+1}$. Therefore we get the direct limit of the actions $\alpha_n$ to be an action $\alpha$ on the union $U=\bigcup_n U_n$ which is also isometric to $\Rat\Ur$ and each $U_n$ is dense in $U$. Therefore $\alpha$ is also naturally a direct limit of the actions $\alpha_n$ on the completion $\Ur$ and it follows from the argument above, for finitely generated groups, that $\alpha$ is a turbulent element in $\Rep(\Gamma,\Ur)$.
\end{proof}
Let us mention that an analogous proof can be also used to show that the conjugacy action of $\Iso(\Ur_1)$ on $\Rep(\Gamma,\Ur_1)$, for any infinite $\Gamma$, is generically turbulent. The turbulent element there is the completion of the \fra limit of finitely generated \emph{bounded} rational generalized pseudonorms on $\Gamma$. 
\begin{thm}
The conjugacy action of $\Iso(\Ur_1)$ on $\Rep(\Gamma,\Ur_1)$ is generically turbulent for any countably infinite group $\Gamma$.
\end{thm}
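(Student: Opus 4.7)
The plan is to mimic the proof of Theorem \ref{thm_turbulent_element} verbatim, with the single modification that every generalized pseudonorm we consider is required to be bounded by $1$. By Hjorth's criterion (Theorem 3.21 in \cite{Hjo}) combined with Theorem \ref{thm_meagerclassesUrysohn} (which already guarantees meager conjugacy classes in $\Rep(\Gamma,\Ur_1)$), it suffices to produce an element $\alpha \in \Rep(\Gamma,\Ur_1)$ whose conjugacy class is dense and which is turbulent. As in the $\Ur$-case, I first treat finitely generated $\Gamma$ and then pass to a direct limit.

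First I would set up the \fra class of finitely generated rational-valued generalized pseudonorms on $\Gamma \times I^2$ that are \emph{bounded by $1$}. Amalgamation goes through exactly as before, with the only subtlety being that the maximal extension $N$ of the amalgamated partial pre-pseudonorm $P_4$ must be truncated at $1$ (equivalently, one replaces $P_4$ by $\min(P_4,1)$ before extending); this is consistent because the triangle inequality for values in $[0,1]$ is preserved under truncation. The \fra limit produces an action $\alpha$ of $\Gamma$ by isometries on a countable rational metric space of diameter at most $1$, which by the extension property is isometric to $\Rat\Ur_1$; I then extend $\alpha$ to $\Ur_1$ by completion. Density of the conjugacy class of $\alpha$ in $\Rep(\Gamma,\Ur_1)$ is shown exactly as in the $\Ur$-case: any open neighbourhood of a representation $\beta$ determines a finite rational partial generalized pseudonorm bounded by $1$, which by the extension property of the \fra limit is realised inside $\alpha$ up to an isometry of $\Ur_1$.

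For turbulence, I would repeat the construction of the `path' $P_\alpha = P_0, P_1, \dots, P_k = P_\beta$ of partial generalized pseudonorms on $H' \times J^2$, where $P_j = \tfrac{k-j}{k} P_\alpha + \tfrac{j}{k} P_\beta$. Convex combinations of $[0,1]$-valued pseudonorms remain in $[0,1]$, so Lemma \ref{lem_turbulent2} applies unchanged. The gluing Lemma \ref{lem_turbulent1} is then used to build a rational sufficient partial generalized pseudonorm $N$ on $C = (\bigcup_{j \leq k} H' \times J(j)^2) \cup (\{1_\Gamma\} \times (J \times k)')$, assigning the value $\delta$ to the `layer-jump' edges and $P_j$ on layer $j$. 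Choosing $\delta \in \QQ$ small enough (in particular $\delta < \varepsilon$ and $\delta < 1 - \max_{g,i,j} \max(P_\alpha(g,i,j), P_\beta(g,i,j))$ if necessary, truncating at $1$ otherwise), the maximal extension stays bounded by $1$, hence lies in our \fra class. The extension property of the \fra limit embeds $C$ into $\alpha$, and then, by homogeneity of $\Ur_1$, one finds isometries $\phi_1, \dots, \phi_k \in V$ shifting layer $j+1$ onto layer $j$; the composition $\phi_k \circ \dots \circ \phi_1$ conjugates $\alpha$ into the prescribed open set $W$, while each partial composition stays in $U$. This shows that the local orbit $O(U,V)$ is dense in $U$, i.e., $\alpha$ is turbulent.

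The main obstacle is bookkeeping the diameter constraint: one must check that (i) truncation at $1$ does not destroy the triangle inequality when forming maximal extensions of partial pseudonorms, (ii) the values inserted between consecutive layers in the gluing lemma can be chosen small enough to keep everything within $[0,1]$ while still allowing a finite chain of length $k \approx M/\delta$, where $M = D(P_\alpha,P_\beta) \leq 1$, and (iii) the resulting intermediate pseudonorms $P_j$ together with the $\delta$-jumps still form a bona fide sufficient partial generalized pseudonorm in the \emph{bounded} class — this is essentially already encoded in Lemma \ref{lem_turbulent1}, since the inductive estimate only uses the triangle inequality in the ambient $[0,1]$-valued pseudonorm class. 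Finally, the passage from finitely generated $\Gamma$ to arbitrary countably infinite $\Gamma$ is identical to the end of the proof of Theorem \ref{thm_turbulent_element}: write $\Gamma = \bigcup_n \Gamma_n$ with $\Gamma_n$ finitely generated, use uniqueness of the \fra limits to glue the $\alpha_n$ into a direct limit action on $\bigcup_n U_n \cong \Rat\Ur_1$, and extend to $\Ur_1$.
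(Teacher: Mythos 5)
Your proposal follows exactly the route the paper intends: the paper's ``proof'' of this theorem is the one-sentence remark that the argument for Theorem \ref{thm_turbulent_element} carries over verbatim once one works with the \fra class of finitely generated \emph{bounded} rational generalized pseudonorms, which is precisely your plan. Your additional bookkeeping (truncation at $1$ preserves the triangle inequality and maximality of extensions, convex combinations and the $\delta$-jumps stay in $[0,1]$) correctly fills in the details the paper leaves implicit, so the proposal is sound.
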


\end{document}